\definecolor{light-gray1}{gray}{0.90}
\definecolor{light-gray2}{gray}{0.80}
\definecolor{light-gray3}{gray}{0.60}
\title{The Calder\'on problem for quasilinear elliptic equations}
\author{Claudio Mu\~noz}
\author{Gunther Uhlmann}
\address{CNRS and Departamento de Ingenier\'ia Matem\'atica DIM, Universidad de Chile, Chile}
\email{cmunoz@dim.uchile.cl, claudio.munoz@math.u-psud.fr}
\address{Department of Mathematics, University of Washington, Box 354350
Seattle, Washington 98195, USA\footnote{Also affiliated to Jockey Club Institute for Advanced Study, HKUST,
Acknowledgement
Clear Water Bay, Kowloon, Hong Kong, China.}}
\email{gunther@math.washington.edu}
\date{\today}
\subjclass[2000]{Primary 35R30; Secondary 35J62}
\keywords{Calder\'on problem, Inverse problem, quasilinear conductivity}
\thanks{}
\chardef\bslash=`\\ 
\newtheorem{thm}{Theorem}[section]
\newtheorem{cor}[thm]{Corollary}
\newtheorem{lem}[thm]{Lemma}
\newtheorem{prop}[thm]{Proposition}
\newtheorem{Cl}[thm]{Claim}
\newtheorem{defn}[thm]{Definition}
\theoremstyle{remark}
\newtheorem{rem}[thm]{Remark}
\numberwithin{equation}{section}
\newcommand{\R}{\mathbb{R}}
\newcommand{\Com}{\mathbb{C}}
\newcommand{\la}{\lambda}
\newcommand{\al}{\alpha}
\newcommand{\bt}{\beta}
\newcommand{\ga}{\gamma}
\newcommand{\re}{\operatorname{Re}}
\def\bm{\left( \begin{array}{cc}}
\def\endm{\end{array}\right)}
 \newcommand{\Dv}{\operatorname{div}}
 \providecommand{\abs}[1]{\lvert#1 \rvert}
 \providecommand{\norm}[1]{\lVert#1 \rVert}
\newcommand{\be}{\begin{equation}}
\newcommand{\ee}{\end{equation}}
\newcommand{\ba}{\left(\begin{array}{c}}
\newcommand{\ea}{\end{array}\right)}
\newcommand{\bea}{\begin{eqnarray}}
\newcommand{\eea}{\end{eqnarray}}
\newcommand{\bee}{\begin{eqnarray*}}
\newcommand{\eee}{\end{eqnarray*}}
\newcommand{\ben}{\begin{enumerate}}
\newcommand{\een}{\end{enumerate}}
\newcommand{\eval}[2][\right]{\relax
  \ifx#1\right\relax \left.\fi#2#1\rvert}
\let\abs=\envert
\let\norm=\enVert
\begin{document}
\begin{abstract}
In this paper we show uniqueness of the conductivity for the quasilinear Calder\'on's inverse problem. The nonlinear conductivity depends, in a nonlinear fashion, of the potential itself and its gradient. Under some structural assumptions on the direct problem, a real-valued conductivity allowing a small analytic continuation to the complex plane induce a unique Dirichlet-to-Neumann (DN) map. The method of proof considers some complex-valued, linear test functions based on a point of the boundary of the domain, and a linearization of the DN  map placed at these particular set of solutions. 
\end{abstract}
\maketitle \markboth{Quasilinear inverse problems} {Mu\~noz and Uhlmann}
\renewcommand{\sectionmark}[1]{}

\section{Introduction}

\medskip

\subsection{Setting of the problem}
Let $\Omega \subset \R^n$, $n\geq 2,$ be a smooth $C^{2,\al}$ bounded domain,  for some $0<\al<1$. Acting on $\Omega$, we will consider a nonlinear, uniformly (in $\Omega$) positive function 
\be\label{a_hyp}
\begin{aligned}
& a: \Omega \times \R \times \R^n \to (0,\infty), \\
& a=a(x,s,p) \geq a_0(s,p)>0, \quad \hbox{$a_0$ given.}
\end{aligned}
\ee
The purpose of this paper is to describe the Calder\'on's inverse problem for a \emph{quasilinear} conductivity $a(\cdot)$, that is to say, the study of the quasilinear scalar equation 
\be\label{IP}
\Dv_x \big[ a(x,u(x),\nabla u(x))~\nabla u(x) \big] =0, \quad x\in \Omega.
\ee
Here $u=u(x)$ is assumed to be a function defined from $\overline{\Omega}$ into $\R$. In order to determine a possibly unique $u$, we will impose a boundary condition
\[
u \big|_{\partial\Omega} =f,
\]
for some fixed $f$ in an space of smooth functions, to be specified below. 

\medskip

The standard and well-known Calder\'on's problem, namely the determination of the conductivity $a=a(x)$ for the problem
\be\label{Calderon}
\Dv_x [ a(x)\, \nabla u(x)] =0, \quad x\in \Omega, \qquad u \big|_{\partial\Omega} =f,
\ee
under the knowledge of the \emph{Dirichlet-to-Neumann map} (DN)
\[
H^{1/2}(\partial\Omega) \ni f  \longmapsto a ~\nabla u \cdot \nu \big|_{\partial\Omega} \in H^{-1/2}(\partial\Omega), \qquad \hbox{$\nu$ unit outer normal to $\Omega$,}
\]
has attracted the attention of many researchers during the past thirty years. Outstanding results in this area are the works by Calder\'on \cite{Cal}, Sylvester the second author  \cite{SU0,SU}, Nachman \cite{Nachman}, Astala and P\"aiv\"arinta \cite{AP}, among many others. The survey \cite{Gu_survey} is a suitable source for a historical account on the developments of the Calder\'on's problem.

\medskip

However, in nonlinear media applications (see \cite{Sun2} for a detailed survey), the conductivity $a(x)$ is usually a nonlinear function, not only depending on the point $x$ but also on the function $u(x)$, and more importantly, on its gradient $\nabla u(x)$. It is for this reason that problem \eqref{IP} is a natural step towards the understanding of several inverse problems coming from different applied scientific problems. 

\medskip

More precisely, by \emph{quasilinear inverse problem} associated to \eqref{IP}, we mean the following question: under which conditions on the conductivity $a$, the boundary values $f$, and a related Dirichlet-to-Neumann map associated to $f$, $a$ and $u$,  we can recover the coefficient
\[
a=a(x,s,p), \quad \hbox{where} \quad (x,s,p) \in\Omega\times \R \times \R^n.
\]
Note that we must recover a scalar function depending on $2n+1$ variables, where $n\geq 2$ is the dimension of space. This inverse problem for \eqref{IP} is in some sense hard to tackle down because of the gradient term $\nabla u$ inside the conductivity $a$, which makes the problem effectively quasilinear, and standard methods do not apply except for very particular situations where the coefficient $a$ has particular properties. For example, in the case where the coefficient $a$ does not depend on the gradient $\nabla u$, namely $a=a(x,s)$ only, Sun \cite{S} showed that the knowledge of the DN map
\be\label{DN_map_0}
C^{2,\al}(\partial\Omega) \ni f \longmapsto  \Gamma_a[f] := a(x,u) \nabla u \cdot \nu\big|_{\partial\Omega} \in C^{1,\al}(\partial\Omega),  
\ee
where $\nu$ is the outer normal in $\partial\Omega$, and  $u=u_f$ is solution of the equation
\[
\Dv (a(x,u)\nabla u) =0 ~ \hbox{ in }~ \Omega, \quad u\big|_{\partial\Omega}=f,
\]
determines the strictly positive coefficient $a(x,s)$. The fundamental step in their proof is to linearize the DN map \eqref{DN_map_0}, following the original idea of Isakov \cite{Isakov}. The objective is then to show that equality of quasilinear DN maps leads to a corresponding equality at the level of linearized DN maps, where a much better developed theory is available. Afterwards, Sun and the second author \cite{SunU} extended this result by considering \emph{anisotropic conductivities}: the DN map
\[
\nu \cdot(A(x,u)\nabla u)\big|_{\partial\Omega}
\]
determines the matrix-valued conductivity $A(x,s)$, in the case where $u$ solves the equation
\[
\Dv (A(x,u)\nabla u) =0 \hbox{ in }\Omega, \quad u\big|_{\partial\Omega}=f,
\]
and $A$ is a symmetric, positive definite matrix. Note that uniqueness is obtained up to a change of coordinates that leaves invariant the boundary: if $\Phi :\overline{\Omega} \to \overline{\Omega}$ is a $C^1$ diffeomorphism that satisfies $\Phi = \hbox{Id}$ on the boundary $\partial \Omega$, then
\be\label{Difeo}
A_\Phi(x,u):= |D\Phi|^{-1} D\Phi^T A(\Phi^{-1} x, u(\Phi^{-1} x))
\ee
is another conductivity that has the same DN map. Here $D\Phi$ is the Jacobian matrix of $\Phi$, and $|D\Phi|$ its Jacobian determinant. Later, Hervas and Sun \cite{HS} considered the problem for the quasilinear problem
\[
\Dv (A(x,u,\nabla u)) =0 \hbox{ in }\Omega, \quad u\big|_{\partial\Omega}=f,
\]
and where $A$ satisfy one of the following two conditions: either $A(x,u,\nabla u)=A_0(x)\nabla u$ (linear case on $\nabla u$, no dependence on $u$), or  $A(x,u,\nabla u)=A_0(\nabla u)$ (no dependence on $x$ nor $u$ at all). In both cases, uniqueness is obtained up to a diffeomorphism that changes coordinates, similar as in \eqref{Difeo}. Then, the natural question is the following: can on improve Sun-Uhlmann and Hervas-Sun results by allowing a complete quasilinear conductivity as in \eqref{IP}?

\medskip

A simple but somehow naive approach to this question should be to extend Hervas-Sun's result \cite{HS} by allowing the conductivity to depend on $x$, $u$ and $\nabla u$. However, one can easily deduce that the problem is in some formal sense \emph{undetermined}, because one has to recover a scalar function depending on $n+1+n=2n+1$ variables, and the corresponding DN map provides much less information. As far as we understand, this problem is completely open. A second issue comes from the fact that even the \emph{solvability theory} for the direct problem is not completely well-understood in the classical sense, and additional conditions are usually needed: either $(i)$ one has solvability for $u$ with small gradient and a few mild assumptions on the conductivity $a$, or $(ii)$ the conductivity is taken having plenty of constraints (with almost no grow in the variable $\nabla u$); however one can recover now solutions with large gradients. In the following, we will precisely specify which of these constraints are needed in our work.   

\subsection{Assumptions}

Let us come back to equation \eqref{IP}. The purpose of this paper is to give a first insight on the resolubility of the Calder\'on's inverse problem for the most possible general quasilinear problem.  However, unlike equation \eqref{Calderon}, the solvability (i.e. existence, uniqueness) of the \emph{direct problem} \eqref{IP} is not guaranteed in general.  Indeed, before stating our main results, we will need to assume some standard \emph{structural assumptions}\footnote{This terminology comes from Gilbarg-Trudinger's monograph \cite{GT}.} on the conductivity $a(\cdot,\cdot,\cdot)$ that will ensure the existence and uniqueness of a solution for the quasilinear direct problem. These are standard sufficient conditions, stated e.g. in Gilbarg and Trudinger's monograph \cite{GT}, but for the sake of completeness we give full details on their meaning in Section \ref{2}. Some of these conditions are necessary, meaning that the lack of a particular assumption leads to nonexistence or non uniqueness of the quasilinear solution. The reader may also consult \cite{HS} for similar conditions, in the case of a conductivity only depending on $x$  and $\nabla u$.

\medskip

\noindent
{\bf Structural assumptions.} Recall that we have assumed that $\Omega \subseteq\R^n$ is an open, bounded domain, of class $C^{2,\al}$, for some $0<\al<1$ fixed, and also that $n\geq 2$. Additionally, let us assume the following:

\medskip

\begin{itemize}
\item[(S1)]\label{S1}  (Smoothness and nonnegativity) $a\in C^{1,\al}(\overline{\Omega} \times  \R \times  \R^n)$, and $a(x,s,p)>0$ for all $(x,s,p) \in \overline\Omega \times \R\times \R^n.$

\medskip

\item[(S2)]\label{S2} (Ellipticity) Let $a_{ij} $ be the symmetric $n\times n$ matrix
\[
a_{ij}(x,s,p):= \frac12 ((\partial_{p_i} a)(x,s,p)\, p_j+(\partial_{p_j} a)(x,s,p)  \, p_i).
\]
Assume that 
\be\label{Ellip0_00}
\hbox{$a_{ij}$ is elliptic in $\overline{\Omega}$},
\ee
which means that, for all $(x,s,p) \in \overline\Omega \times \R\times \R^n$,
\[
0<\la(x,s,p) |\xi|^2 \leq a_{ij}(x,s,p) \xi_i\xi_j \leq \Lambda (x,s,p)|\xi|^2<+\infty,
\]
(see Definition \ref{Elliptic0} and \eqref{Elliptic1} for more details and a general definition).

\medskip

\item[(S3)] (Growth conditions) Additionally, we will assume the following growth conditions: for any $(x,s,p) \in \Omega \times \R\times \R^n$,
\be\label{Conds_12_0}
\begin{cases}
\begin{aligned}
\la (x,s,p)   \geq &~ \la_0 (|s|)>0  ,  \\
|p| |\nabla_p a(x,s,p)| +  |a(x,s,p)| \leq & ~ \mu_0(|s|), \\
(1+|p|)  |\partial_s a (x,s,p) |  +   |\nabla_x a(x,s,p)| \leq & ~ \mu_0(|s|) |p|,\\
p\cdot A(x,s,p) \geq &~ |p|^\bt -|a_1 s|^\bt -a_2^\bt,
\end{aligned}
\end{cases}
\ee
for functions $\la_0$ (resp. $\mu_0$) positive and non-increasing (resp. non-decreasing) in $|s|$, and constants $\bt>1$, $a_1,a_2>0$.  
\end{itemize}

It is stated in \cite{GT} (see more details in Section \ref{2}, and in particular, the descriptions in Theorem \ref{Regularity_1}) that, under assumptions (S1)-(S3), the quasilinear problem for $u:\Omega\to \R$ real-valued,
\be\label{Quasi_00}
\Dv (a(x,u,\nabla u)\nabla u) =0 \hbox{ in }\Omega, \quad u\big|_{\partial\Omega}=f\big|_{\partial\Omega}, \qquad f \in C^{2,\al}(\overline\Omega),
\ee
is {\bf uniquely solvable} for $u$ in $C^{2,\al}(\overline\Omega)$, with standard a-priori estimates.\footnote{The assumption $f \in C^{2,\al}(\overline\Omega)$ is standard for the theories developed in \cite{GT}, and it avoids the problem of finding a smooth extension of a boundary value condition $f$ defined only on $\partial\Omega$. For the sake of simplicity, we will assume this simplification as well as in \cite{GT}.} Moreover, the DN map
\be\label{Gammaa}
C^{2,\al}(\partial\Omega) \ni f \longmapsto  \Gamma_a[f] := a(x,u,\nabla u) \nabla   u \cdot \nu \big|_{\partial\Omega} \in C^{1,\al}(\partial\Omega),  
\ee
where $u=u_f$ is the solution to the equation \eqref{Quasi_00}, is well-defined and continuous (Corollary \ref{Regularity_2}).

\begin{rem}
Conditions (S1)-(S3), although sufficient for solvability of the quasilinear problem \eqref{IP}, are in some sense also necessary, because  some a-priori estimates on the boundary (needed for the existence part) may fail if one of these conditions is lifted, see Remark \ref{Rem2p16} for more details.
\end{rem}

\begin{rem}
The conditions imposed in (S3) may seem too strong compared with the standard theory for linear scalar elliptic problems, but they are required with the purpose of having solutions with large gradients. Less restrictive assumptions on the growth of $a(x,s,p)$ as a function of $p$ are certainly possible, but an a-priori restriction on the size of the gradient $\nabla u$ of the solution will be probably needed.  
\end{rem}

\medskip

\noindent
{\bf Inverse problem assumptions.}  In addition to the previous estimates, we will need three additional, non-structural assumptions. We call them non-structural assumptions because these are sufficient conditions for showing that the DN map in \eqref{Gammaa} is one-to-one. Although needed for the method of proof, we also believe that some of them are in a certain sense also necessary conditions, but we have no rigorous proof of this claim. In what follows, we shall assume the following hypotheses

\medskip

\begin{itemize}
\item[(H1)]\label{H1} The conductivity $a$ is homogeneous in space: it only depends on $s$ and $p$: $a=a(s,p)$, and $a(s,p) >0$ for all $(s,p)\in \R\times \R^n$.

\medskip

\item[(H2)]\label{H2} There are constants $r_0,R_0>0$, such that $a(s,p)$ has an \emph{analytic} continuation as a function of $(s,p)\in \mathcal R\times B_{r_0}(0)$,  where\footnote{$B_r(p_0):= \{ p\in \Com^n ~ : ~ |p-p_0|<r\}$. Note also that $\mathcal R$ is unbounded.}
\be\label{mR}
\mathcal R: =\R + i[-R_0,R_0] \subseteq\Com
\ee
is a (not necessarily small) band around the real line, and with $a(s,p) \in \Com$ being real-valued for $s$ and $p$ real-valued.

\medskip

\item[(H3)]\label{H3} Under assumption (H2), the real part of the complex-valued function $a(s,p)$ is positive, in the sense that there exist $0<\la<\Lambda<+\infty$ (depending continuously on $s$, $p$) such that\footnote{Note that we are not asking for $a$ being entire, but only bounded on a bounded domain.} 
\be\label{H30}
\begin{aligned}
0<\tilde\la\leq \la(s,p)  \leq & ~\re \, a(s,p) \\
 |a(s,p)| +| \nabla_p a(s,p)|   \leq &~ \Lambda,~ \hbox{ for all }  ~ (s,p) \in  \mathcal R\times B_{r_0}(0).
\end{aligned}
\ee

%
%

\end{itemize}

Some preliminary remarks about these conditions are absolutely necessary.

\begin{rem}
Hypothesis (H1) can be understood as a reduction on the number of variables to be found: we are looking for a conductivity depending on $n+1$ variables, but still improving in some sense each of the results in \cite{S,SunU,HS}, which recover $\leq n+1$ variables (and in the case of a nontrivial gradient, the problem simplifies to an almost linear situation). However, we also believe that hypothesis (H1) can be relaxed to allow ``near to a constant'' inhomogeneous conductivities, as in standard ``direct'' elliptic theory. Hypothesis (H2) is needed for the method of proof, and probably can be lifted after one can construct some equivalent real-valued test functions to the ones we will mention in this paper. Finally, hypothesis (H3) is needed to preserve the ellipticity of a suitable complex-valued quasilinear problem, and it is certainly an essential condition for us.  
\end{rem}

\medskip

\noindent
{\bf Examples.} Some examples of conductivities of this type are, for $p$ small if necessary, the conductivity generalizing the \emph{minimal surface equation}:
\[
a_1(s,p) := \frac{f(s)}{\sqrt{1+ p\cdot p} },
\]
among many others (in this last example one must rewrite the equation without fractional terms). See also \cite[pp. 260--263]{GT} for more details on the class of quasilinear equations appearing from different applied problems.

\subsection{Main results}
Now we state our main result. Let  $u=u_f$ be a solution of the equation
\be\label{IPnew}
\Dv (a(u,\nabla u)\nabla u) =0 \hbox{ in }\Omega, \quad u\big|_{\partial\Omega}=f.
\ee
\begin{thm}\label{MT}
Under the hypotheses \emph{(S1)-(S3)} and \emph{(H1)-(H3)}, the knowledge of the DN map
\be\label{Gamma}
C^{2,\al}(\partial\Omega) \ni f \longmapsto \Gamma_a[f] = a(u,\nabla u) \partial_\nu u\big|_{\partial\Omega} \in C^{1,\al}(\partial\Omega),  
\ee
fully and uniquely determines  the quasilinear coefficient $a(s,p)$ in problem \eqref{IPnew}. More precisely, if $\Gamma_{a_1} \equiv \Gamma_{a_2}$, then $a_1 \equiv a_2$ in $\mathcal R \times B_{r_0}(0)$.
\end{thm}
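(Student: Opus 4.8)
The plan is to adapt Isakov's linearization method (as used in \cite{S,SunU,HS}) to this quasilinear setting; the new ingredient is a family of \emph{complex affine solutions} that are simultaneously exact solutions of \eqref{IPnew} for \emph{every} admissible conductivity. Fix a complex null vector $q\in\Com^n$, i.e. $q\cdot q=0$, with $|q|<r_0$, and a constant $c\in\Com$ with $\ima c$ and $\ima q$ so small that $u_0(x):=c+q\cdot x$ satisfies $u_0(\overline\Omega)\subset\mathcal R$. Then, because $\nabla u_0\equiv q$ is a constant vector and $q\cdot q=0$,
\[
\Dv\big(a(u_0,\nabla u_0)\,\nabla u_0\big)=(\partial_s a)(c+q\cdot x,q)\,(q\cdot q)=0
\]
for \emph{any} $a$; moreover the principal part of the linearization of the complexified operator at $u_0$ is $a(u_0,q)\,\mathrm{Id}+q\otimes\nabla_p a(u_0,q)$, which by \eqref{H30} has positive–definite real part once $r_0$ is small. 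Hence, by the complexified counterpart of the solvability theory of Section~\ref{2} (which is precisely what hypotheses (H2)--(H3) are designed to furnish), $u_0$ is \emph{the} solution of \eqref{IPnew} with boundary value $f_0:=(c+q\cdot x)\big|_{\partial\Omega}$, for both $a=a_1$ and $a=a_2$. Since the solution operator $f\mapsto u_f$ depends analytically on $f$ and the set of admissible complex data with $\|\ima f\|$ small is connected and contains the real data on which $\Gamma_{a_1}\equiv\Gamma_{a_2}$ is assumed, the identity $\Gamma_{a_1}[f]=\Gamma_{a_2}[f]$ propagates to this complex neighbourhood, in particular to $f=f_0+g$ for all small $g\in C^{2,\al}(\partial\Omega;\Com)$.

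Next I would linearize $\Gamma_{a_j}$ at $f_0$ to all orders. Denote by $u^{(j)}$ the solution with data $f_0+\sum_{i=1}^N t_ig_i$, and $w_j:=\partial_{t_1}\!\cdots\partial_{t_N}u^{(j)}\big|_{t=0}$. Each $w_j$ solves a linear elliptic equation $L_jw_j=F_{N,j}$ with
\[
L_jv=\Dv\big(M_j\nabla v+W_jv\big),\quad M_j(x)=a_j(u_0,q)\,\mathrm{Id}+q\otimes\nabla_p a_j(u_0,q),\quad W_j(x)=q\,(\partial_s a_j)(u_0,q),
\]
the source $F_{N,j}$ being built from the lower–order mixed $t$–derivatives of $u^{(j)}$ at $t=0$ together with the derivatives of $a_j$ at $(u_0(x),q)$ of order $\le N$; note that $u^{(j)}\big|_{t=0}=u_0$ and $\nabla u^{(j)}\big|_{t=0}=q$, so only the jet of $a_j$ along the complex null cone ever enters. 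The corresponding $N$‑th derivative of $\Gamma_{a_j}$ at $f_0$ equals, up to lower–order data already matched by induction, the Neumann trace $(M_j\nabla w_j+W_jw_j)\cdot\nu\big|_{\partial\Omega}$. Since all Fréchet derivatives of $\Gamma_{a_1}$ and $\Gamma_{a_2}$ at $f_0$ coincide, an induction on $N$ — invoking boundary determination (and, if useful, interior uniqueness via the real–analyticity of $x\mapsto\partial^\gamma a_j(c+q\cdot x,q)$) for the linear operators $L_j$, whose principal part is the scalar conductivity $a_j(u_0,q)\,\mathrm{Id}$ plus the small, structured, known–direction rank–one term $q\otimes\nabla_p a_j$ — yields, after varying $x_0\in\partial\Omega$ and the parameters $(c,q)$,
\[
\partial^\gamma a_1(s,q)=\partial^\gamma a_2(s,q)\qquad\text{for every multi-index }\gamma\text{ in the }(s,p)\text{ variables,}
\]
for all null $q$ with $|q|<r_0$ and all $s$ in a strip $\R+i(-\delta,\delta)\subset\mathcal R$. (At order $N=0$ this reads $a_1=a_2$; at order $N=1$ one reads off $\nabla_pa_j$ from $M_j$ and $\partial_sa_j$ from $W_j$.)

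Finally I would upgrade this agreement along the null cone to full agreement by analyticity. Fixing such an $s$ and setting $h:=(a_1-a_2)(s,\cdot)$, the function $h$ is holomorphic on $B_{r_0}(0)\subset\Com^n$ and every $p$–derivative $\partial^\gamma h$ vanishes on $\{q\cdot q=0\}\cap B_{r_0}(0)$; since this null quadric accumulates at $0$ and each $\partial^\gamma h$ is continuous, $\partial^\gamma h(0)=0$ for all $\gamma$, so $h\equiv 0$ near $0$ and, by the identity theorem on the connected ball $B_{r_0}(0)$, $h\equiv 0$. Thus $a_1(s,\cdot)\equiv a_2(s,\cdot)$ for every $s$ in the strip, and one last application of the identity theorem in the variable $s$ on the connected band $\mathcal R$ gives $a_1\equiv a_2$ on all of $\mathcal R\times B_{r_0}(0)$.

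I expect the main obstacles to be twofold. First, the careful set–up of the \emph{complexified} direct problem and the verification that the equality of DN maps genuinely propagates from real to complex boundary data: this is where (H2)--(H3) and the a priori estimates of Section~\ref{2} must be used to keep the complexified operator uniformly elliptic and well posed along the entire family $u_0=c+q\cdot x$. Second, the boundary (and, if used, interior) determination for the linearized operators $L_j$, which are complex–coefficient, carry a first–order term, and have a genuinely anisotropic — though small and structured — principal part, so that standard scalar boundary determination does not apply verbatim and a perturbative argument around $a_j(u_0,q)\,\mathrm{Id}$ is required. By contrast, the (laborious) bookkeeping of the higher–order linearizations is conceptually routine; the conceptual crux is simply that matching \emph{all} linearizations of $\Gamma_a$ forces agreement of \emph{all} $(s,p)$–derivatives of $a$ along the complex null cone, which is exactly the input that the analytic continuation consumes.
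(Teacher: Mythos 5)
Your overall architecture matches the paper's: the complex affine null solutions $u_0=c+q\cdot x$ with $q\cdot q=0$, the complexified solvability theory near them under (H2)--(H3), the propagation of $\Gamma_{a_1}\equiv\Gamma_{a_2}$ from real to complex data, linearization at $u_0$, recovery of $a$ on the null cone, and a final analytic continuation off the cone. However, the central identifiability step is where your proposal has a genuine gap. You propose to match \emph{all} higher-order linearizations and to extract $M_j=a_j(u_0,q)\,\mathrm{Id}+q\otimes\nabla_p a_j(u_0,q)$ and $W_j=q\,(\partial_s a_j)(u_0,q)$ by invoking ``boundary determination'' for the linearized operators $L_jv=\Dv(M_j\nabla v+W_jv)$. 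No such result is established (you flag this yourself): these operators are complex-coefficient, non-symmetric, anisotropic, and carry a zeroth/first-order term, and for operators of this form the DN map in general determines the coefficients only up to gauge and diffeomorphism invariances (the paper itself recalls the diffeomorphism obstruction in \eqref{Difeo}). Your induction on $N$ therefore rests on an unproved — and, as stated, possibly false without further normalization — key lemma. The paper avoids this entirely and uses only the \emph{first} linearization: because $p\cdot p=0$, the constants $v\equiv c$ and the affine function $v=p\cdot x$ are \emph{exact} solutions of the linearized equation \eqref{a0_new}; evaluating the known linearized DN map on these two data at the distinguished boundary point $0\in\partial\Omega$ (where $p\cdot\nu(0)\neq 0$) yields $\partial_s a(s,p)$ directly from the zeroth-order coefficient $\widetilde b=p\,\partial_s a$, and then $a(0,p)$ from an elementary ODE in the scaling parameter. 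This is precisely the ``helpful zero-order term'' the paper emphasizes; your plan discards this shortcut and replaces it with machinery that does not yet exist.

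Two secondary points. First, your propagation of the DN-map identity from real to complex data by ``connectedness of the set of admissible complex data'' is not a valid argument: connectedness never propagates an identity between analytic maps. One needs a unique-continuation statement, either from the totally real submanifold of real data, or — as the paper does in Proposition \ref{step1} — analyticity in the finite-dimensional parameters $(s,p)$ restricted to the analytic manifold $\mathcal A(r_0)\setminus\{0\}$ together with the identity theorem there. Second, your final continuation step correctly recognizes that vanishing of $a_1-a_2$ on the null quadric alone does not force vanishing on $B_{r_0}(0)$ (e.g. $p\mapsto p\cdot p$), which is why you require all $p$-derivatives on the cone; but that requirement is exactly what forces you into the unproved higher-order boundary-determination step. (The paper's own passage from $\mathcal C(r_0)$ to $B_{r_0}(0)$ via Riemann's extension theorem is terse on this point, but that does not repair your argument.) To close the gap you should replace the boundary-determination appeal by the explicit evaluation of the first linearized DN map on the exact solutions $c$ and $p\cdot x$ at a boundary point, as in the proof of Theorem \ref{Uniqueness_linear_0}.
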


\begin{rem}
This result can be seen as the first example of uniqueness for the quasilinear Calder\'on's problem where the conductivity depends on both $u$ and $\nabla u$ in a nontrivial fashion.
\end{rem}

\begin{rem}
In principle, hypothesis (H1) may seem too restrictive, but as mentioned before, recovering a conductivity depending on $x$, $u$ and $\nabla u$ could be considered as a problem with too many degrees of freedom, and uniqueness may not hold for the case of large gradients. On the other hand, we also believe that the analyticity condition (H2) can be relaxed to allow less restrictive conductivities. 
\end{rem}

\subsection{Ideas of the proof}

The proof of Theorem \ref{MT} relies on the introduction of a new class of solutions for \eqref{IPnew} which have nontrivial gradient. Recall the hypothesis (H2), that ensures that  $a(s,p)$ is analytic in a particular tubular neighborhood of the real case. Assume that $0\in \partial\Omega$, otherwise we translate the domain (or change the following argument by a suitable space translation). Under this framework, we fix $s\in \Com$ and $p\in \Com^n$ such that $p\cdot p =0$, and introduce the following set of functions
\be\label{Usp}
u_{s,p}(x) := s+x\cdot p \in \Com, \quad x\in \Omega.
\ee
Note that $x\cdot p$ is the standard inner product between the (real-valued) vector $x$ and the complex-valued vector $p$. A first important property of these functions is the following: each $u_{s,p}$ solves \eqref{IPnew} in the classical sense, provided $p\cdot p=0$:
\[
\Dv (a(u_{s,p},\nabla u_{s,p})\nabla u_{s,p}) = 0 \quad \hbox{ in } ~\Omega.
\]
This last identity shows precisely a nontrivial bifurcation, along a complex analytic manifold ($p\cdot p=0$) of the standard constant solutions (in this case, $u_{s,0}$), which where mostly considered by Z. Sun and coauthors \cite{SunU,HS}.

\medskip

Solving in general the direct quasilinear problem \eqref{IPnew} for $a(\cdot)$ \emph{complex-valued} is a hard problem. Very few results are available in the literature, and they mostly consider  the linear case only. Some recent breakthroughs on the regularity problem are the works by Hofmann et al. \cite{Hofmann}, and subsequent papers. See also the work by Barton \cite{Barton} for more details on this approach. In our case, we will only consider solutions that are close enough to a particular exact solution $(u_{s,p})$, which in some sense simplifies the solvability theory.

\medskip

In Theorem \ref{Regularity_2_complex} we will show the existence and uniqueness of complex-valued solutions in the neighborhood of each $u_{s,p}$, provided $p$ is chosen small. In proving this result we will invoke the Implicit Function Theorem. A suitable complex-valued Dirichlet-to-Neumann map arises from this construction. 

\medskip

The second ingredient of the proof is the linearization technique above mentioned, applied this time to the complex-valued case. We will show in Corollary \ref{5p3} that if two complex-valued DN maps coincide, then their respective linearization are well-defined and must coincide, at least for small gradients.  

\medskip

A third ingredient of the proof is the uniqueness a particular set of complex-valued conductivities in a linearized Calder\'on problem. Unlike the standard real-valued Calder\'on problem, the proof of the former result relies on simple and elementary evaluation techniques, and no CGO solutions are needed (although each $u_{s,p}$ may be recast as a very particular CGO solution), see the proof of Theorem \ref{Uniqueness_linear} for the corresponding details. We also emphasize that our techniques do not apply for the standard real-valued Calder\'on problem, because of the absence of a particular zeroth order term that makes things work in our case.

\medskip

The last part of the argument consists in comparing the real and complex-valued DN maps. By hypothesis, we only have information about the real-valued one, and some information must be transferred from the real to the complex one. In order to show this fact we will prove that the complex-valued DN map is the unique continuation of the real-valued one, around each $u_{s,p}$, to the complex $n$-dimensional space $p\in \Com^n$, $p$ small. We will use here the fact that the manifold $p\cdot p=0$ is analytic outside the origin. As a consequence, complex-valued DN maps are equal for each boundary value data. Consequently, linearized DN maps coincide, and therefore, from the previous results (Theorem \ref{Uniqueness_linear}), both conductivities are the same everywhere.

\subsection*{Organization of this paper} In Section \ref{2}, and in order to make this paper self-contained, we review the standard solvability theory of quasilinear problems. Then, in Section \ref{3} we show the existence and uniqueness of a complex-valued, linear elliptic problem. In Section \ref{4} we extend this result to the nonlinear case, and show the existence of well-defined DN maps. Section \ref{5} is devoted to the linearization of the DN map, and the transference of uniqueness from nonlinear to the linear regime. Section \ref{6} deals with the uniqueness for the Calder\'on problem associated to a particular linear, complex-valued equation resulting from the linearization of the quasilinear problem. Finally, in Section \ref{7} we show the main result, Theorem \ref{MT}.


\subsection*{Notations}  Through this paper, we will assume the following conventions:
\begin{itemize}
\item Given $p\in \Com^n,$ $p^T$ denotes its transpose vector.
\item $C^\al(\Omega;\Com)$, $C^\al(\partial\Omega;\Com)$  denote H\"older spaces of  complex-valued functions of exponent $\al$.
\item $B_r(x_0)$ will denote the open ball centered at $x_0 \in \Com^n$ (or $\R^n$), of radius $r>0$.
\item $\nu(x) \in \mathbb S^{n-1}$ denotes the outer unit normal to a point $x\in \partial\Omega$.
\item Given an $m\times n$ matrix $A$, we will denote its norm as $\|A\|^2 := \sum_{i,j} |a_{ij}|^2$. 
\end{itemize}

\subsection*{Acknowledgments} C. M. would like to thank the Mathematics Department of the U. Washington for its kind hospitality during the elaboration of this work. He also would like to thank the Laboratoire de Math\'ematiques d'{}Orsay for his kind hospitality during past years, and where part of this work was completed. Finally, C.M. was partially funded by ERC Blowdisol (France), Fondecyt no. 1150202 Chile, Fondo Basal CMM (U. Chile), and Millennium Nucleus Center for Analysis of PDE NC130017. G. U. was partly supported by NSF and a Si-Yuan Professorship at IAS, HKUST. 

\bigskip

\section{Review on the real-valued, quasilinear direct problem}\label{2}

\medskip

\subsection{Preliminaries} In this section we recall some well-known results on quasilinear scalar equations. For the sake of completeness, we state (without proofs) all interesting results, even if they are not essentially needed. Some good references for these results are the monographs by Ladyzenskaja and Ural'ceva \cite{LU}, and also Gilbarg and Trudinger \cite{GT}. Most of the results below are stated for operators in divergence form, but they have general counterparts, see the aforementioned monographs for more details and general statements. 

\medskip

Recall that we assumed $\Omega$ being a smooth ($C^{2,\al}$ for instance, $0<\al<1$), bounded domain in dimension $n\geq 2$. The regularity of the boundary is essentially needed in one specific statement, Theorem \ref{Existence_0}.  Let $\mathcal Q =\mathcal Q_u$ be an operator in divergence form
\be\label{Qu}
\begin{aligned}
\mathcal Q_u u=\mathcal Q u := & ~\Dv A(x,u,\nabla u), \\
 u= &~ u(x)\in C^2(\Omega).
\end{aligned}
\ee
Here the vector field $A=A(x,s,p)$, defined in $\Omega\times \R \times \R^n$ with values in $\R^n$, is assumed to be at least differentiable, although in practice we will need $C^1$ regularity, see (S1) in page \pageref{S1}. We start with the following definition, which is standard.

\medskip

\begin{defn}[Ellipticity, see \cite{GT}, eqn. (10.5), p. 259]\label{Elliptic0}
We say that $\mathcal Q$ as in \eqref{Qu} is elliptic in $\Omega$ if there are constants $0<\la<\Lambda<+\infty$, depending on $(x,s,p)\in \Omega\times \R \times \R^n$, such that for all $\xi\in \R^n,$ 
\be\label{Elliptic1}
0<\la(x,s,p) |\xi|^2 \leq a_{ij}(x,s,p) \xi_i\xi_j \leq \Lambda (x,s,p)|\xi|^2,
\ee
and where
\be\label{aij}
 a_{ij}(x,s,p) := \frac12 (\partial_{x_j} A_i+\partial_{x_i} A_j)(x,s,p).
\ee
Similarly, we will say that $\mathcal Q$ is elliptic in $\overline{\Omega}$ if \eqref{Elliptic1} holds for all $(x,s,p) \in \overline{\Omega} \times \R\times \R^n$, and \emph{uniformly elliptic} if $\la$ and $\Lambda$ do not depend on $x\in \Omega$.
\end{defn}

\noindent
{\bf Examples.} Some examples of vector fields $A$ that we will see through this paper are the following:
\[
A_1(x,s,p):= a(x,s,p) p, \quad a(\cdot) \hbox{ scalar valued}.
\]
and for $(s,p)\in \R\times \Com^n$ fixed, and if $a=a(s,p)$ is differentiable in its  variables $(s,p)$,
\[
A_2(x,z,q):=  a(s + p\cdot x,p) q + p\, \{   (\nabla_p a)(s+ p\cdot x,p) \cdot q +(\partial_s a)(s + p\cdot x, p) z  \}.
\]
This second vector field $A_2$ can be recast as a sort of linearization of  the nonlinear field $A_1$, around a particular solution.

\begin{rem}[Complex-valued case]\label{rem_complex}
Let us assume now that $A=A(x,s,p)$ is complex-valued, differentiable in $x\in \Omega$ and analytic in a region of points $\mathcal M \ni (s,p)\in \Com\times \Com^n$. We will say that $\mathcal Q$ is {\bf elliptic} in $\Omega \times \mathcal M$ if there are constants $0<\la<\Lambda<+\infty$, depending on $(x,s,p)\in \R^n\times  \mathcal M$, such that for all $\xi\in \Com^n$,
\be\label{Elliptic1_complex}
0<\la(x,s,p) |\xi|^2 \leq \re \big( a_{ij}(x,s,p) \xi_i \overline{\xi_j}\big) \leq \Lambda (x,s,p)|\xi|^2,
\ee
and where, as usual,
\be\label{aij_complex}
 a_{ij}(x,s,p) = \frac12 (\partial_{x_j} A_i+\partial_{x_i} A_j)(x,s,p).
\ee
As in the real-valued case, we will say that $A$ is uniformly elliptic in $\Omega \times \mathcal M$ if both $\la$ and $\Lambda$ are functions not depending on $x\in \Omega$. 
\end{rem}

\subsection{Uniqueness} We first mention a uniqueness result. The following result is a slight modification of Theorem 10.7, p. 268 in \cite{GT}, adapted to our needs. 

\begin{thm}[Uniqueness]\label{Uniqueness}
Assume that $\mathcal Q u =\mathcal Qv =0$ for $u,v \in C^2(\overline{\Omega})$, with $\mathcal Q$ in \eqref{Qu} elliptic in $\Omega$ (see inequalities \eqref{Elliptic1}), and $A=A(x,s,p)$ continuously differentiable with respect to the $s$ and $p$ variables. If $u = v$ on $\partial\Omega$, then $u\equiv v$ in $\Omega$.  
\end{thm}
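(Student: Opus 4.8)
The plan is to reduce the quasilinear uniqueness statement to a linear comparison argument by subtracting the two equations and exploiting the fundamental theorem of calculus in the coefficient. Since $\mathcal Q u = \Dv A(x,u,\nabla u) = 0$ and likewise for $v$, I would write $w := u - v$, which satisfies $w = 0$ on $\partial\Omega$, and observe that
\[
0 = \Dv \big( A(x,u,\nabla u) - A(x,v,\nabla v) \big).
\]
Now I would interpolate: for $t \in [0,1]$ set $u_t := v + t w$, $\nabla u_t = \nabla v + t \nabla w$, and apply the mean value identity
\[
A(x,u,\nabla u) - A(x,v,\nabla v) = \int_0^1 \frac{d}{dt} A(x,u_t,\nabla u_t)\, dt = \Big( \int_0^1 \nabla_p A(x,u_t,\nabla u_t)\, dt \Big) \nabla w + \Big( \int_0^1 \partial_s A(x,u_t,\nabla u_t)\, dt \Big) w,
\]
which is legitimate because $A$ is $C^1$ in $(s,p)$. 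Defining $B(x) := \int_0^1 \nabla_p A(x,u_t,\nabla u_t)\, dt$ (an $n\times n$ matrix) and $c(x) := \int_0^1 \partial_s A(x,u_t,\nabla u_t)\, dt$ (a vector), we get that $w$ solves the linear divergence-form equation $\Dv( B(x)\nabla w + c(x) w) = 0$ in $\Omega$, with $w = 0$ on $\partial\Omega$.

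Next I would check ellipticity of the symmetric part of $B$. The symmetrized matrix is
\[
\tilde a_{ij}(x) = \tfrac12\big( B_{ij}(x) + B_{ji}(x) \big) = \int_0^1 \tfrac12\big( \partial_{p_j} A_i + \partial_{p_i} A_j\big)(x,u_t,\nabla u_t)\, dt = \int_0^1 a_{ij}(x,u_t,\nabla u_t)\, dt,
\]
recalling the definition \eqref{aij}. Since $\mathcal Q$ is elliptic in $\Omega$, each integrand satisfies the lower bound in \eqref{Elliptic1} with some $\la(x,u_t,\nabla u_t) > 0$, and since $u, v \in C^2(\overline\Omega)$ the point $(x,u_t(x),\nabla u_t(x))$ ranges over a compact subset of $\overline\Omega \times \R \times \R^n$, so $\inf_{t,x} \la > 0$ and $\sup$ of $\Lambda$ is finite. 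Hence $\tilde a_{ij}(x)\xi_i\xi_j \geq \la_\ast |\xi|^2$ for a uniform $\la_\ast > 0$, i.e. the linear operator for $w$ is uniformly elliptic with bounded (actually continuous, since the integrands are continuous) coefficients.

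Finally I would conclude $w \equiv 0$ by the maximum principle for linear elliptic equations in divergence form with a first-order term of the stated type. The cleanest route is to multiply $\Dv(B\nabla w + cw) = 0$ by $w$ and integrate over $\Omega$; since $w$ vanishes on $\partial\Omega$ the boundary term drops and integration by parts gives $\int_\Omega (\nabla w)^T B \nabla w\, dx = -\int_\Omega w\, c \cdot \nabla w\, dx$. Using the uniform lower bound on the symmetric part of $B$ on the left and Cauchy--Schwarz plus the $L^\infty$ bound on $c$ on the right, combined with the Poincar\'e inequality for $w \in H^1_0(\Omega)$, one forces $\nabla w = 0$, hence $w \equiv 0$; alternatively one invokes directly Theorem~3.7 or the $W^{1,2}$ maximum principle from \cite{GT} (this is exactly the adaptation of Theorem 10.7 alluded to). I expect the only real subtlety — the ``main obstacle'' — to be the ellipticity verification: one must be careful that the relevant constant is the infimum over the \emph{line segment} of arguments $(x, u_t, \nabla u_t)$, which is why compactness of $\overline\Omega$ together with $u,v \in C^2(\overline\Omega)$ (bounding $u_t$ and $\nabla u_t$ uniformly) is essential, and why the hypothesis is ellipticity on $\Omega$ with constants depending continuously on all variables rather than merely pointwise positivity.
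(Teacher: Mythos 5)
Your linearization via the fundamental theorem of calculus and your verification of uniform ellipticity of the symmetrized matrix $\tilde a_{ij}$ are correct, and they are exactly how any proof must begin (the paper itself gives no proof, deferring entirely to Theorem 10.7 of \cite{GT}). The genuine gap is in your final step. From $\int_\Omega(\nabla w)^T B\nabla w\,dx=-\int_\Omega w\,c\cdot\nabla w\,dx$ you deduce $\la_*\|\nabla w\|_{L^2}^2\le\|c\|_{L^\infty}C_P\|\nabla w\|_{L^2}^2$ and assert that this ``forces $\nabla w=0$''; it does so only if $\|c\|_{L^\infty}C_P<\la_*$, i.e.\ under a smallness condition tying the first-order coefficient $c=\int_0^1\partial_sA\,dt$ to the Poincar\'e constant of $\Omega$ and the ellipticity constant. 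Nothing in the hypotheses provides this: for instance $A(x,s,p)=p+\mu s\,x$ with $\mu$ large is smooth and uniformly elliptic ($\tilde a_{ij}=\delta_{ij}$), yet $\|c\|_{L^\infty}=\mu\,\sup_\Omega|x|$ is arbitrarily large. Your fallback citations do not close the gap either: the $W^{1,2}$ maximum principle of \cite{GT} (Theorem 8.1) applies to $\Dv(B\nabla w+cw)$ only under the structural condition $\int_\Omega c\cdot\nabla v\ge 0$ for all $v\ge0$ (i.e.\ $\Dv c\le0$ weakly), which is not available; and the classical maximum principle needs the zeroth-order coefficient $\Dv c$ of the expanded equation to be nonpositive (and $c$ to be differentiable at all), which again is not given. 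The troublesome point is precisely that the coefficient $c$ sits \emph{inside} the divergence.

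The statement is nevertheless true, and the missing idea is to handle $\Dv(cw)$ by duality rather than by direct absorption: $Lw=\Dv(B\nabla w+cw)$ on $H_0^1(\Omega)$ is a compact perturbation of the isomorphism $w\mapsto\Dv(B\nabla w)$, hence Fredholm of index zero, and its adjoint $L^*\psi=\Dv(B^T\nabla\psi)-c\cdot\nabla\psi$ has no zeroth-order term and no coefficient inside the divergence, so the weak maximum principle does apply to $L^*$ and gives $\ker L^*=\{0\}$; the Fredholm alternative (Theorem 8.6 in \cite{GT}) then yields $\ker L=\{0\}$, i.e.\ $w\equiv0$. This (or the test-function argument in the actual proof of Theorem 10.7 in \cite{GT}) is what must replace your last paragraph. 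It is telling that the paper's own complex-valued analogue, Theorem \ref{Uniqueness_new}, runs exactly your energy-plus-Poincar\'e computation but explicitly imposes the smallness hypothesis \eqref{MaMa} so that the first-order term can be absorbed; without such smallness your closing step is not a proof.
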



The next result explains the maximum principle for operators in the form of divergence, adapted to our setting. Note that no ellipticity assumption is needed, although a different \emph{coercivity} assumption is imposed.

\begin{thm}[See Theorem 10.9, p. 272 in \cite{GT}]
Assume that $u\in C^0(\overline{\Omega}) \cap C^1(\Omega)$ satisfies $\mathcal Q u =0$ in the weak sense in $\Omega$,\footnote{This means the equation $\mathcal Q u=0$, with $\mathcal Q$ and $u$ as in \eqref{Qu}, is tested against a $C_0^\infty(\Omega)$ function.} and suppose that for some $\bt> 1$, and $a_1,a_2>0$,
\be\label{Coercivity_0}
p\cdot A(x,s,p) \geq |p|^\bt -|a_1 s|^\bt -a_2^\bt, \quad \hbox{for all } (x,s,p) \in \Omega \times \R\times \R^n.
\ee
Then one has the estimate
\[
\sup_\Omega |u| \leq C(a_2 +a_1 \sup_{\partial\Omega}|u| )+ \sup_{\partial\Omega}|u|, \quad C=C(n,\bt,a_1,|\Omega|)>0.
\]  
\end{thm}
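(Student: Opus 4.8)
The plan is to adapt the classical maximum principle argument for divergence-form operators (Gilbarg--Trudinger, Theorem 10.9) to the present coercivity hypothesis \eqref{Coercivity_0}. First I would test the weak equation $\mathcal{Q}u = 0$ against a well-chosen nonnegative test function supported on the superlevel set $\Omega_k := \{x\in\Omega : u(x) > k\}$ for $k \geq \sup_{\partial\Omega} u$ (the case of $-u$ being symmetric). A natural choice is $\varphi = (u-k)_+$, or a truncation of it to stay in $C^1_0$; after an approximation argument to justify this is an admissible test function (using $u \in C^0(\overline\Omega)\cap C^1(\Omega)$ and density), integration by parts gives
\[
\int_{\Omega_k} A(x,u,\nabla u)\cdot \nabla u \, dx = 0,
\]
since $\nabla\varphi = \nabla u$ on $\Omega_k$ and $\varphi$ vanishes on $\partial\Omega_k$.

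Next I would insert the coercivity bound \eqref{Coercivity_0}: on $\Omega_k$ we have $A(x,u,\nabla u)\cdot\nabla u \geq |\nabla u|^\beta - |a_1 u|^\beta - a_2^\beta$, so
\[
\int_{\Omega_k} |\nabla u|^\beta \, dx \leq \int_{\Omega_k} \big( |a_1 u|^\beta + a_2^\beta \big) \, dx.
\]
This is a Caccioppoli-type estimate; combined with a Sobolev or Poincar\'e inequality on $\Omega_k$ for the function $(u-k)_+$, which vanishes on $\partial\Omega$, one controls $\|(u-k)_+\|_{L^{\beta^*}}$ or $\|(u-k)_+\|_{L^\beta}$ in terms of $|a_1 u|$ and $a_2$ over the (shrinking) set $\Omega_k$. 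The standard device is then a De Giorgi / Stampacchia iteration: defining $\psi(k) = \int_{\Omega_k}(u-k)\,dx$ or the measure $|\Omega_k|$, one derives a recursive inequality of the form $\psi(h) \leq C (h-k)^{-\gamma} |\Omega_k|^{1+\epsilon}$ for $h > k$, whose solution forces $\psi(k) = 0$ for $k$ larger than an explicit threshold. Tracking constants through Sobolev's inequality and the iteration lemma yields the quantitative bound $\sup_\Omega u \leq C(a_2 + a_1\sup_{\partial\Omega}|u|) + \sup_{\partial\Omega}|u|$ with $C = C(n,\beta,a_1,|\Omega|)$, and the symmetric argument applied to $-u$ (using that the coercivity hypothesis is even in the relevant sense, or rather re-deriving it) closes the estimate for $|u|$.

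\textbf{Main obstacle.} The delicate point is the bookkeeping in the iteration when the right-hand side itself involves $u$ through the term $|a_1 u|^\beta$: one must absorb this contribution, which is why the threshold and the final constant depend on $a_1$ and on $\sup_{\partial\Omega}|u|$ rather than giving a bound by boundary data alone with a universal constant. Handling this cleanly requires either splitting $\int_{\Omega_k}|a_1 u|^\beta$ into a part near the level $k$ (absorbed into the left side or iterated) and a lower-order part, or rescaling $u$ so that the inhomogeneity becomes a fixed datum; this is exactly the technical heart of Theorem 10.9 in \cite{GT}, and the proof here is a transcription of that argument under the slightly different coercivity normalization \eqref{Coercivity_0}. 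A secondary, more routine point is justifying that $(u-k)_+$ (suitably truncated) is a legitimate test function given only $C^0(\overline\Omega)\cap C^1(\Omega)$ regularity, which is handled by a standard exhaustion of $\Omega$ by compactly contained subdomains together with the weak formulation.
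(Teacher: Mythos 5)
The paper does not actually prove this statement — it is quoted verbatim from Gilbarg--Trudinger (Theorem 10.9), and your sketch reproduces exactly the argument given there: test the weak formulation with $(u-k)_+$ for $k>\sup_{\partial\Omega}u$ (so that $\Omega_k\Subset\Omega$ and the test function is admissible by density), insert the structure condition \eqref{Coercivity_0}, and close with a Sobolev inequality plus a Stampacchia-type level-set iteration, treating $-u$ symmetrically. Your outline is correct, and the one step you explicitly defer — absorbing the $\int_{\Omega_k}|a_1 u|^\beta$ term, e.g.\ by splitting $u=(u-k)+k$ on $\Omega_k$ — is indeed the technical heart of the GT proof and is precisely where the dependence of $C$ on $a_1$ and the $a_1\sup_{\partial\Omega}|u|$ contribution in the final estimate arise, as you correctly anticipate.
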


Before continuing, some remarks are essentially needed.

\begin{rem}
This result is useful because it gives a-priori $C^0$ estimates for any sufficiently smooth solution $u$ of $\mathcal Q u=0$ in $\Omega$, only in terms of its values on the boundary. In that sense, this is the first step for establishing a-priori estimates for solutions to the quasilinear problem $\mathcal Q u=0$.
\end{rem}

\begin{rem}\label{Rem_0}
Assume that $A(x,s,p) = a(x, s,p) p$, with $a \geq 1$ uniformly in $(x,s,p)\in \Omega\times \R\times \R^n$. Then we have
\[
p\cdot A(x,s,p) = a(x,s,p) |p|^2 \geq |p|^2,
\]
which implies that \eqref{Coercivity_0}  is satisfied with $\bt=2>1$, and $a_1=a_2=0$.  Therefore, in this particular case, one simply has the pure $C^0$ estimate
\[
\sup_\Omega |u| \leq \sup_{\partial\Omega}|u|.
\]
\end{rem}


\subsection{A priori estimates}

The next step is how to establish existence of solutions for the quasilinear problem $\mathcal Q u =0$, where $\mathcal Q$ is as in \eqref{Qu}. Recall the definition of ellipticity in $\overline{\Omega}$, see Definition \ref{Elliptic1}.  For the next result, we will assume, for $|p| \to +\infty$, the following \emph{structural} conditions on $A$: 
\be\label{Conds_11}
\begin{aligned}
\nu (|s|) (1+|p|)^\tau  \leq & ~ \la (x,s,p),  \quad \hbox{(see \eqref{Elliptic1}),} \\
\|\nabla_p A(x,s,p)\|  \leq & ~ \mu(|s|) (1+|p|)^\tau \\
(1+|p|) |\partial_s A (x,s,p) |  + \| D_x A(x,s,p) \| \leq & ~ \mu(|s|) |p|^{\tau +2},
\end{aligned}
\ee
for some $\tau >-1$, and $\nu$ (resp. $\mu$) positive and non-increasing (resp. non-decreasing) in $|s|$.

\medskip

The next result, essentially Theorem 14.1 in \cite{GT} (p. 337), establishes boundary gradient estimates for solutions to $\mathcal Q u=0$ in $\Omega$. Before announcing this, we need some standard notations and definitions. Assume that $u\in C^2(\Omega)$, and that $\mathcal Q$ is elliptic as in Definition \ref{Elliptic0}. Recall that \eqref{Qu} can be written, using Einstein's summation convention, as
\[
\begin{aligned}
\mathcal Qu =&~ \Dv_x A(x,u,\nabla u)  \\
=& ~ \partial_{x_i} A_i(x,u,\nabla u) \\
 =&  ~(\partial_{x_i} A_i)(x,u,\nabla u) +(\partial_{s} A_i)(x,u,\nabla u)~ \partial_{x_i} u +\partial_{p_j} A_i(x,u,\nabla u)~\partial_{x_ix_j}^2 u.
\end{aligned}
\]
Since $u\in C^2(\Omega)$, we have $(\partial_{x_ix_j}^2 u)_{i,j}$ symmetric, which implies that 
\[
\begin{aligned}
\mathcal Qu = &~ \frac12 (\partial_{p_j} A_i(x,u,\nabla u)+\partial_{p_i} A_j(x,u,\nabla u)) ~\partial_{x_ix_j}^2 u\\
 &  ~ + (\partial_{x_i} A_i)(x,u,\nabla u) + (\partial_{s} A_i)(x,u,\nabla u)~ \partial_{x_i} u \\
 =& ~ a_{i,j}(x,u,\nabla u)~\partial_{x_ix_j}^2 u + b(x,u,\nabla u),
\end{aligned}
\]
where 
\[
b(x,s,p) := (\partial_{x_i} A_i)(x,s,p) + (\partial_{s} A_i)(x,s,p)~ p_i,
\]
see also Definition \ref{Elliptic0}. Gilbarg and Trudinger define (see eqn. (10.3)) the principal part of $\mathcal Q$ as $\mathcal E$:
\[
\mathcal E(x,s,p) := a_{i,j}(x,s,p) p_i p_j,
\]
so that, since $\mathcal Q$ is elliptic,
\[
0<\la(x,s,p) |p|^2 \leq \mathcal E(x,s,p) \leq \Lambda (x,s,p)|p|^2.
\]
Having this in mind, we also have
\be\label{New_Cond_0}
\begin{aligned}
|p| \Lambda (x,s,p) \sim & ~ |p| \| a_{i,j}(x,s,p)\| \\
\sim & ~ |p| \| D_{p} A(x,s,p)\| \leq \mu(|s|) (1+|p|)^{\tau +1},
\end{aligned}
\ee
thanks to \eqref{Conds_11}. Applying once again \eqref{Conds_11}, we have
\[
\mu(|s|) (1+|p|)^{\tau +1} \leq \tilde \mu(|s|) \la (x,s,p)(1+|p|)  \leq  \tilde \mu(|s|) \mathcal E(x,s,p),
\]
for $p$ large. Additionally, if $p$ is large,
\be\label{New_Cond_1}
\begin{aligned}
|b(x,s,p)| \leq & ~ |(\Dv_x A)(x,s,p)| +\| (\partial_{s} A)(x,s,p)\||p|  \\
 \leq & ~  \mu(|s|) |p|^{\tau +2} \\
 \leq & ~ \tilde \mu(|s|) \mathcal E(x,s,p),
\end{aligned}
\ee
using again \eqref{Conds_11}. In conclusion,
\[
|p|\Lambda (x,s,p) + |b(x,s,p)| \leq \tilde \mu(|s|) \mathcal E(x,s,p).
\]
These are the ``structure'' conditions imposed in \cite[eqn. (14.9)]{GT}, and are clearly satisfied thanks to \eqref{Conds_11}. Consequently, we have

\begin{thm}[Boundary estimates]\label{Boundary}
Let $u\in C^2(\Omega)\cap C^1(\overline{\Omega})$ satisfy 
\be\label{Q_new}
\mathcal Q u= a_{i,j}(x,u,\nabla u) \, \partial_{x_ix_j}^2 u + b(x,u,\nabla u)=0
\ee
in $\Omega$ and $u\big|_{\partial\Omega}=f \in C^2(\overline{\Omega})$. Suppose that $\Omega$ satisfies the uniform exterior sphere condition, with uniform radius $\delta>0$.  Then, under assumptions \eqref{Conds_11}, one has
\be\label{Gradient_Boundary}
\sup_{\partial\Omega}|\nabla u| \leq C(n,M, \mu(M), N,\delta), \qquad M:=\sup_\Omega |u|, \quad N:= \|f\|_{C^2(\overline{\Omega})}.
\ee
\end{thm}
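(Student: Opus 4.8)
The plan is to follow the classical barrier argument of Gilbarg--Trudinger (Theorem 14.1), specialized to the structure conditions we have already verified, namely that $|p|\Lambda(x,s,p)+|b(x,s,p)|\le\tilde\mu(|s|)\,\mathcal E(x,s,p)$ for $|p|$ large. The idea is purely local and near the boundary: fix $x_0\in\partial\Omega$, use the uniform exterior sphere condition to place a ball $B_\delta(y_0)$ touching $\partial\Omega$ only at $x_0$, and construct, in a neighborhood $\mathcal N=\Omega\cap B_{\delta'}(x_0)$, a barrier of the form $w(x)=f(x_0)+\psi(d(x))$ where $d(x)=\dist(x,\partial B_\delta(y_0))$ (so $d$ is smooth near $x_0$, $|\nabla d|=1$) and $\psi$ is a concave increasing function $\psi(t)=\frac1\nu\log(1+kt)$ with $\nu,k>0$ to be chosen. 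First I would record that $d$ has bounded derivatives depending only on $\delta$ and that $\mathcal Q$ annihilates constants only up to the lower-order term $b$, so the computation of $\mathcal Q w$ reduces to $\psi''\,a_{ij}\partial_i d\,\partial_j d+\psi'\,a_{ij}\partial^2_{ij}d+b$.

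The key computation is to show $\mathcal Q w\le 0$ in $\mathcal N$ (so $w$ is a supersolution) for $k$ large. Along $\nabla w=\psi'\nabla d$ one has $|\nabla w|=\psi'=k/(\nu(1+kd))$, which is large precisely when $k$ is large and $d$ small, so the asymptotic structure conditions apply. Using $a_{ij}\partial_i d\,\partial_j d=\mathcal E(x,w,\nabla w)/|\nabla w|^2$ together with ellipticity, and $\psi''=-\nu(\psi')^2$, the leading term is $-\nu(\psi')^2\,\mathcal E/|\nabla w|^2=-\nu\,\mathcal E$; the remaining terms $\psi'\,a_{ij}\partial^2_{ij}d$ and $b$ are each bounded by (a constant times $\delta^{-1}$ times) $\Lambda|\nabla w|$ and by $\tilde\mu\,\mathcal E$, which the structure condition $|p|\Lambda+|b|\le\tilde\mu\,\mathcal E$ controls by a fixed multiple of $\mathcal E$. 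Hence choosing $\nu$ larger than that multiple makes $\mathcal Q w\le0$. Then I would arrange the boundary conditions: on $\partial B_\delta(y_0)\cap\overline{\mathcal N}$ we have $d=0$ so $w=f(x_0)$, and since $f\in C^2(\overline\Omega)$ with $\|f\|_{C^2}=N$, replacing $w$ by $w\pm(K_1 d)$ with $K_1$ comparable to $N$ (and $w(x_0)$ shifted to $f(x_0)$, and adding a further multiple of $d$ to dominate $|f(x)-f(x_0)|\le C N\,|x-x_0|\lesssim N\,d$) yields $w\ge f$ on the whole of $\partial\mathcal N$: on the portion of $\partial\mathcal N$ interior to $\Omega$ we use that $d$ is bounded below there by a positive constant depending on $\delta,\delta'$, so $w$ can be made as large as $M=\sup_\Omega|u|$ there by taking $k$ large, while on $\partial\Omega\cap\mathcal N$ the extra $d$-term dominates $f(x)-f(x_0)$.

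With $w$ a supersolution dominating $u$ on $\partial\mathcal N$, the comparison principle (Theorem \ref{Uniqueness}, in its comparison form for elliptic $\mathcal Q$ with $A$ continuously differentiable in $(s,p)$) gives $u\le w$ in $\mathcal N$; since $u(x_0)=f(x_0)=w(x_0)$, the function $w-u$ attains its minimum $0$ at the boundary point $x_0$, whence $\partial_\nu(w-u)(x_0)\le0$, i.e. $\partial_\nu u(x_0)\le\partial_\nu w(x_0)=-\psi'(0)=-k/\nu$, which is the desired one-sided bound. Applying the same argument to $-u$ with the analogous subsolution gives the matching lower bound, and since $\nabla u$ on $\partial\Omega$ is determined by $u=f$ along $\partial\Omega$ (tangential derivatives bounded by $\|f\|_{C^1}\le N$) together with the normal derivative just estimated, we conclude $\sup_{\partial\Omega}|\nabla u|\le C(n,M,\mu(M),N,\delta)$, with the constant tracking only the quantities that entered the barrier's size, namely $\nu$ (hence $\mu(M)$ via the structure constant $\tilde\mu(M)$), $k$ (chosen comparably to $\max(M,N)/\delta$ times a dimensional factor), and $\delta$. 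The main obstacle is the barrier construction and the verification $\mathcal Q w\le0$: one must be careful that the asymptotic structure inequalities we derived only hold for $|p|$ large, so the argument genuinely needs $k$ (hence $|\nabla w|$) large, and one must check that the interior part of $\partial\mathcal N$ is handled by that same largeness rather than requiring a separate mechanism; this bookkeeping, and the precise dependence of constants, is the delicate part, though it is entirely standard once the structure conditions \eqref{New_Cond_0}--\eqref{New_Cond_1} are in hand, and can be quoted essentially verbatim from \cite[Thm. 14.1]{GT}.
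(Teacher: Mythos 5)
Your overall strategy is the one the paper itself intends: the paper does not prove this statement from scratch, but verifies in the displayed computations \eqref{New_Cond_0}--\eqref{New_Cond_1} that the hypotheses \eqref{Conds_11} imply the structure condition $|p|\Lambda(x,s,p)+|b(x,s,p)|\le\tilde\mu(|s|)\,\mathcal E(x,s,p)$ of \cite[eqn.\ (14.9)]{GT} and then quotes \cite[Theorem 14.1]{GT}. Your computation showing $\mathcal Q w\le 0$ for the logarithmic barrier built from $\psi(t)=\nu^{-1}\log(1+kt)$ and the distance $d$ to the exterior sphere is the correct core of that argument, and the identification of which constants enter is right.

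There is, however, a genuine flaw in the step where you force the barrier to dominate $u$ on the boundary portion of $\partial\mathcal N$. You claim $|f(x)-f(x_0)|\le CN|x-x_0|\lesssim N\,d(x)$ for $x\in\partial\Omega$ near $x_0$. This is false: with $d(x)=|x-y_0|-\delta$ one only has $d\ge 0$ on $\partial\Omega$, and $d$ vanishes to \emph{second} order along $\partial\Omega$ at the touching point (in the model case of a boundary tangent to the exterior sphere, $d(x)\sim|x-x_0|^2/(2\delta)$ for $x\in\partial\Omega$), so no fixed multiple of $d$ can dominate the first-order variation $f(x)-f(x_0)$, and the comparison $w\ge u$ on $\partial\Omega\cap\overline{\mathcal N}$ breaks down. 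The standard repair, and the one used in \cite{GT}, is to build the boundary datum into the barrier: take $w=f(x)+\psi(d(x))$ (equivalently, estimate $v=u-f$, which vanishes identically on $\partial\Omega$), so that $w-u=\psi(d)\ge 0$ on $\partial\Omega\cap\overline{\mathcal N}$ with no need to compare $d$ against $|x-x_0|$; the extra terms $a_{ij}\partial^2_{ij}f$ and $\nabla f$ appearing in $\mathcal Q w$ are bounded using $N$ and $\Lambda$ and absorbed by $-\nu\,\mathcal E$ once $k$ is large, which is exactly where the dependence on $N$ in \eqref{Gradient_Boundary} enters. A secondary point of the same nature: you should take $\mathcal N=\{x\in\Omega:\,d(x)<a\}$ rather than $\Omega\cap B_{\delta'}(x_0)$, since on the spherical inner boundary $\Omega\cap\partial B_{\delta'}(x_0)$ the function $d$ need not be bounded below by a constant depending only on $\delta,\delta'$ (points of $\Omega$ there may lie arbitrarily close to $\partial B_\delta(y_0)$); with the level-set neighborhood one has $\psi(d)=\psi(a)\ge 2M$ on the inner boundary after choosing $k$ large. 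With these two corrections the remainder of your argument (comparison via Theorem \ref{Uniqueness}, the normal derivative bound at $x_0$, tangential derivatives controlled by $N$) goes through as you describe.
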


\begin{rem}
The uniform exterior sphere condition for $\Omega$ is needed in order to construct suitable barriers, which is a standard technique in elliptic theory. See \cite[Chapter 14]{GT} for more details.
\end{rem}

\medskip

The following result is Theorem 15.9 in \cite{GT}. Recall the definitions of $M$ and $N$ in \eqref{Gradient_Boundary}.

\begin{thm}
Let $u\in C^2(\Omega)\cap C^0(\overline{\Omega})$ satisfy \eqref{Q_new} in $\Omega$ bounded and assume \eqref{Conds_11} valid for $\tau>-1$. Assume additionally that $\Omega$ satisfies the exterior sphere condition and that $u=f$ on $\partial\Omega$, with $f\in C^2(\overline{\Omega})$. Then we have
\be\label{Gradient_Interior}
\sup_\Omega|Du| \leq C(n,\tau,\nu(M),\mu(M),\partial\Omega, N, P), \quad P:=\sup_\Omega |A(x,0,0)|.
\ee
\end{thm}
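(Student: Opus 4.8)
This is the global gradient bound, and the plan is to derive it as the combination of the boundary gradient estimate of Theorem~\ref{Boundary} with an interior gradient estimate, following the two–step scheme of \cite{GT}. Since $\sup_\Omega|Du|$ is the larger of its interior supremum and (after upgrading the regularity to $u\in C^1(\overline\Omega)$, or directly via an exterior–sphere barrier argument at the boundary) its boundary supremum, and the latter is already controlled by $C(n,M,\mu(M),N,\delta)$ thanks to Theorem~\ref{Boundary}, everything reduces to an a priori interior estimate $\sup_{\Omega'}|Du|\le C$ for $\Omega'\subset\subset\Omega$, with $C$ depending on $n,\tau,\nu(M),\mu(M),P$ and $\mathrm{dist}(\Omega',\partial\Omega)$. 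Near $\partial\Omega$ this interior bound a priori degrades like $\mathrm{dist}(\cdot,\partial\Omega)^{-1}$, so at the end one glues it to the boundary estimate, whose barriers carry the dependence on $\partial\Omega$ (through the exterior–sphere radius) and on $N=\|f\|_{C^2(\overline\Omega)}$.

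For the interior estimate I would exploit the divergence structure. Differentiating $\Dv A(x,u,\nabla u)=0$ in a direction $x_k$, the derivative $w_k:=D_k u$ satisfies, in the weak sense, the linear divergence–form equation
\[
\Dv\!\big(\tilde a_{ij}\,D_j w_k + (\partial_s A_i)(x,u,\nabla u)\,w_k + (\partial_{x_k}A_i)(x,u,\nabla u)\big)=0,\qquad \tilde a_{ij}:=(\partial_{p_j}A_i)(x,u,\nabla u),
\]
whose symmetric part is the elliptic matrix $a_{ij}(x,u,\nabla u)$; the structure conditions~\eqref{Conds_11} bound its data by $\|\tilde a\|\le\mu(M)(1+|Du|)^\tau$, $(1+|Du|)|\partial_s A|+\|D_xA\|\le\mu(M)|Du|^{\tau+2}$, while $\lambda|\xi|^2\le a_{ij}\xi_i\xi_j$. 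The heart of the matter is then the auxiliary function $v:=|Du|^2$ (or a weighted variant $g(u)(1+|Du|^2)$): testing the $w_k$–equations against $\eta^2\phi(v)w_k$ for a cutoff $\eta$ and an increasing $\phi$, summing over $k$ and integrating by parts yields a Caccioppoli-type inequality in which the good Dirichlet term $\int\eta^2\phi'(v)\,a_{ij}D_{ik}u\,D_{jk}u$ — comparable through $\mathcal E(x,u,\nabla u)$ to the Dirichlet energy of a power of $1+v$ — dominates every error term, precisely because the bookkeeping in~\eqref{New_Cond_0}--\eqref{New_Cond_1} makes each lower–order contribution bounded by $\mathcal E(x,u,\nabla u)$ after rescaling. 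Iterating this along a sequence of exponents $\phi(v)\sim(1+v)^{q_m}$ à la Moser, with base estimate $\int_{B_R}|Du|^2\le C(M,R,P)$ obtained by testing $\Dv A(x,u,\nabla u)=0$ against $\eta^2(u-c)$ and using the coercivity $p\cdot A(x,s,p)\gtrsim\nu(|s|)|p|^{\tau+2}-|A(x,s,0)|\,|p|$ (which follows from~\eqref{Conds_11} by writing $A_i(x,s,p)p_i=A_i(x,s,0)p_i+\int_0^1(\partial_{p_j}A_i)(x,s,tp)\,p_ip_j\,dt$ together with $\tau+2>1$), produces the local $L^\infty$ bound $\sup_{B_{R/2}}|Du|\le C$.

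Then combine, as described, with Theorem~\ref{Boundary}: covering $\Omega$ by balls whose radii near $\partial\Omega$ are comparable to the distance to the boundary, on each near-boundary patch of which $f$ controls $Du$, converts the scale-dependent interior bound into the asserted uniform bound $\sup_\Omega|Du|\le C(n,\tau,\nu(M),\mu(M),\partial\Omega,N,P)$. Here $P=\sup_\Omega|A(x,0,0)|$ enters through the inhomogeneous term $\partial_{x_k}A_i$ in the $w_k$–equation and through the Caccioppoli base estimate, while $\partial\Omega$ carries the exterior–sphere radius and the Sobolev/covering constants.

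The main obstacle, and the real content beyond formal manipulation, is making the absorption step rigorous under only $u\in C^2$ — so the Bernstein-type maximum principle for $v$, which would want $u\in C^3$, must be replaced by the integral/divergence-form version above — and under the possibly \emph{degenerate} ellipticity $\lambda\sim(1+|p|)^\tau$ with $\tau\in(-1,0)$: one must keep every error term strictly dominated by $\mathcal E(x,u,\nabla u)$ uniformly in $|Du|$, which is exactly why the third line of~\eqref{Conds_11} is stated with the borderline power $|p|^{\tau+2}$ and why the computations~\eqref{New_Cond_0}--\eqref{New_Cond_1} were performed in advance.
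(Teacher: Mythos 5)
The paper does not actually prove this statement: it is quoted from \cite{GT} (Theorem 15.9), so the only benchmark is the textbook argument, and your two-step scheme (boundary gradient estimate by barriers, then a gradient bound exploiting the divergence structure via the differentiated equation and De Giorgi--Moser iteration on $v=|Du|^2$) is exactly that argument. Your interior portion is sound, including the two correct caveats: the differentiated equation must be used in weak form because $u$ is only $C^2$, and every error term must be absorbed into $\mathcal E(x,u,\nabla u)$ uniformly under the degenerate ellipticity $\la\sim(1+|p|)^{\tau}$, which is what the third line of \eqref{Conds_11} and the computations \eqref{New_Cond_0}--\eqref{New_Cond_1} are for.

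The step that does not close as written is the gluing. A covering of a boundary collar by balls of radius comparable to $\dist(\cdot,\partial\Omega)$ gives nothing there: Theorem \ref{Boundary} controls $|Du|$ only \emph{on} $\partial\Omega$, not in a strip, while your interior estimate on such a ball degrades like $\dist^{-1}$, so the two bounds never meet. The mechanism in \cite{GT} (Theorem 15.8) is instead a single \emph{global} level-set iteration: one tests the differentiated equation against cutoffs of $(v-k)_+$ with truncation level $k\ge\sup_{\partial\Omega}|Du|^2$, so that all test functions vanish on $\partial\Omega$ and no boundary terms arise; this yields $\sup_\Omega|Du|\le C\big(1+\sup_{\partial\Omega}|Du|\big)$ directly, after which Theorem \ref{Boundary} finishes. (An alternative repair of your covering idea is to prove the interior estimate in oscillation-scaled form, $|Du(y)|\le C\big(1+\operatorname{osc}_{B_{d(y)}(y)}u/d(y)\big)$, and feed in the Lipschitz bound $|u(x)-f(x_0)|\le C|x-x_0|$, $x_0\in\partial\Omega$, which the barrier construction actually provides.) One also needs a small approximation step to pass from $u\in C^1(\overline\Omega)$, under which the global iteration is carried out, to the stated regularity $u\in C^2(\Omega)\cap C^0(\overline\Omega)$.
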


Now we recall some H\"older estimates for the gradient of $u$. Let us remind the H\"older seminorm:
\[
[u]_{\al,\Omega} := \sup_{x,y\,\in \Omega,~ x\neq y} \frac{|u(x)-u(y)|}{|x-y|^\al}.
\]

\begin{thm}[See Theorem 13.2 p. 323 in \cite{GT}]
Assume $u\in C^2(\overline{\Omega})$ is such that $\mathcal Q u=0$ in $\Omega$, with $\mathcal Q$ elliptic in $\overline{\Omega}$, and $A\in C^1(\overline{\Omega}\times \R\times \R^n)$. Finally, assume that $\partial\Omega\in C^2$ and $u=f$ on $\partial\Omega$, where $f\in C^2(\overline{\Omega})$. Then
\[
[\nabla u]_{\al,\Omega} \leq C\Big( n,K, \frac{\Lambda_K}{\lambda_K},\frac{\mu_K}{\la_K},\Omega, \|f\|_{C^{2}(\overline\Omega)} \Big), \quad K:= \|u\|_{C^{1}(\overline\Omega)},
\]
and $\al=\al(n, \Lambda_K/\lambda_K,\Omega)>0$ (see (13.4) in \cite{GT}).
\end{thm}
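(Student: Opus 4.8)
\emph{Proof proposal.} Since $u\in C^2(\overline\Omega)$ is already assumed, the number $K=\|u\|_{C^1(\overline\Omega)}$ is finite, so the plan is to regard $a_{ij}(x,u(x),\nabla u(x))$ and $b(x,u(x),\nabla u(x))$ as \emph{given} functions on $\Omega$: by ellipticity of $\mathcal Q$ in $\overline\Omega$ the former are bounded, measurable and (their symmetric part) uniformly elliptic with constants $\lambda_K,\Lambda_K$, while the latter is bounded by a multiple of $\mu_K$. This reduces the quasilinear equation to a \emph{linear} divergence-structure equation, to which I would apply the De~Giorgi--Nash--Moser Hölder estimates for such equations from \cite[Ch.~8]{GT}, first in the interior and then up to the boundary, and finally patch the local bounds together.

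First I would rewrite the equation in linear form. Using $A\in C^1$ and Taylor's formula in the $p$-slot,
\[
A_i(x,u,\nabla u)=A_i(x,u,0)+\Big(\int_0^1(\partial_{p_j}A_i)(x,u,t\nabla u)\,dt\Big)\partial_{x_j}u=:\bar g_i(x)+\bar a_{ij}(x)\,\partial_{x_j}u,
\]
so that $u$ is a weak solution of $\partial_{x_i}\big(\bar a_{ij}(x)\,\partial_{x_j}u\big)=-\partial_{x_i}\bar g_i(x)$ with $\bar a_{ij}$ bounded measurable, symmetric part elliptic with constants comparable to $\lambda_K,\Lambda_K$, and $\bar g\in C^{0,1}(\overline\Omega)$ (because $A\in C^1$ and $u\in C^1$). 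A Caccioppoli inequality for this equation would yield $u\in W^{2,2}_{\mathrm{loc}}(\Omega)$, which is what legitimizes the next step. For the interior estimate I would take difference quotients $w=\Delta_h^k u$ and check that $w$ is a weak solution in $\Omega$ of a linear divergence-form equation whose leading coefficients are again elliptic with constants $\lambda_K,\Lambda_K$ and whose lower-order coefficients and data are bounded in terms of $\mu_K$ and $K$, \emph{uniformly in} $h$; then \cite[Thm.~8.24]{GT} gives, for every $\Omega'\Subset\Omega$,
\[
[\nabla u]_{\alpha,\Omega'}\le C\big(n,K,\tfrac{\Lambda_K}{\lambda_K},\tfrac{\mu_K}{\lambda_K},\dist(\Omega',\partial\Omega)\big),\qquad \alpha=\alpha(n,\tfrac{\Lambda_K}{\lambda_K})>0.
\]

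Next I would treat a boundary point $x_0\in\partial\Omega$: since $\partial\Omega\in C^2$, I would flatten $\partial\Omega$ near $x_0$ by a $C^2$ diffeomorphism and subtract $f$ (in the new coordinates), so that $v:=u-f$ solves an equation of the same type on a half-ball and vanishes on the flat part — the ellipticity constants and the bounds changing only by factors depending on the $C^2$ norms of the flattening map and of $f$, i.e.\ on $\Omega$ and $\|f\|_{C^2(\overline\Omega)}$. Tangential difference quotients of $v$ then vanish on the flat portion, so the boundary Hölder estimate \cite[Thm.~8.29]{GT} controls $[\partial_\tau v]_\alpha$ up to the boundary for tangential directions $\tau$; the pure normal second derivative $\partial_{x_ix_j}^2 v$ contracted with $\nu\otimes\nu$ is then recovered algebraically from the PDE $a_{ij}\,\partial_{x_ix_j}^2 v+(\text{tangential and known terms})=0$ using $a_{ij}\nu_i\nu_j\ge\lambda_K>0$, upgrading the estimate to all of $\nabla u$ in a neighborhood of $x_0$ in $\overline\Omega$. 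A covering argument — finitely many interior balls and boundary neighborhoods, their number depending only on $\Omega$ — then patches the local estimates of the two steps into the claimed global bound, with $\alpha$ ending up depending on $n$, $\Lambda_K/\lambda_K$ and $\Omega$ (the $\Omega$-dependence entering only through the flattening maps).

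The hard part will be the boundary analysis: one must flatten $\partial\Omega$ while carefully tracking how the ellipticity ratio $\Lambda_K/\lambda_K$ and the Hölder exponent $\alpha$ degrade under the change of variables, reduce cleanly to homogeneous boundary data so that the boundary De~Giorgi--Nash estimate applies to tangential difference quotients, recover the normal derivative from the equation, and finally reconcile the finitely many local exponents into a single global $\alpha$. By contrast, the interior step is essentially routine once $u\in W^{2,2}_{\mathrm{loc}}$ has been established.
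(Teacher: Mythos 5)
The paper gives no proof of this statement: Section \ref{2} explicitly states these solvability and a priori results ``without proofs,'' deferring entirely to Theorem 13.2 of \cite{GT}. Your sketch --- freeze the coefficients at $(x,u(x),\nabla u(x))$, differentiate (or take difference quotients of) the equation so that $\nabla u$ solves a linear divergence-structure equation with bounded measurable coefficients that are elliptic with constants $\lambda_K,\Lambda_K$, apply the De~Giorgi--Nash--Moser estimates of \cite[Ch.~8]{GT} in the interior and, after flattening $\partial\Omega$ and subtracting $f$, at the boundary, recovering the normal derivative algebraically from the equation --- is precisely the argument of \cite[Ch.~13]{GT}, so the proposal is correct and follows the same route as the cited source.
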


\medskip

\subsection{Existence. Leray-Schauder fixed point argument}
The following Leray-Schauder type result is the key tool to prove existence of solutions for $\mathcal Q u=0$. Note that the existence is proven in H\"older classes, however, this condition could be relaxed by allowing a less regular class of solutions (and a different notion of solution).

\begin{thm}
Let $\Omega$ be a bounded domain in $\R^n$, with $\mathcal Q$ as in \eqref{Q_new} elliptic in $\overline\Omega$, with coefficients 
\[
a_{ij}\in C^\al(\overline\Omega\times \R \times \R^n), \quad b\in C^\al (\overline\Omega \times\R\times \R^n), \quad 0<\al<1. 
\] 
Let $\partial\Omega \in C^{2,\al}$ and $f\in C^{2,\al}(\overline{\Omega}).$ If there exists a constant $\mathcal M$, independent of $u$ and $\sigma \in [0,1]$, such that for every $C^{2,\al}(\overline\Omega)$ solution of the Dirichlet problems
\[
\begin{aligned}
\mathcal Q_\sigma u := & ~ a_{ij}(x,u,\nabla u)\partial_{i,j}^2 u +\sigma b(x,u,\nabla u) =0 \quad \hbox{ in }~ \Omega,\\
u\big|_{\partial\Omega} = & ~ \sigma f , \quad \sigma \in [0,1],
\end{aligned}
\]
satisfies the $C^1$ uniform bound
\[
\sup_\Omega |u| + \sup_\Omega |\nabla u| <\mathcal M,
\]
then the Dirichlet problem $\mathcal Q u =0$, $u\big|_{\partial\Omega} =  f$ has a solution in $C^{2,\al}(\overline\Omega)$.
\end{thm}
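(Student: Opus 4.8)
The plan is to apply the Leray--Schauder fixed point theorem (in the form stated in \cite[Theorem~11.6]{GT}) to a solution operator for the associated family of \emph{linear} problems. Fix an exponent $\beta \in (0,\alpha]$, to be pinned down below, and define a map $T : C^{1,\beta}(\overline{\Omega}) \times [0,1] \to C^{1,\beta}(\overline{\Omega})$ as follows: given $v \in C^{1,\beta}(\overline{\Omega})$ and $\sigma \in [0,1]$, let $T(v,\sigma) := u$ be the unique solution in $C^{2,\beta}(\overline{\Omega})$ of the linear Dirichlet problem
\[
a_{ij}\big(x,v(x),\nabla v(x)\big)\, \partial^2_{ij} u = -\sigma\, b\big(x,v(x),\nabla v(x)\big) \quad \text{in } \Omega, \qquad u\big|_{\partial\Omega} = \sigma f .
\]
Since $v \in C^{1,\beta}(\overline{\Omega})$ and $a_{ij}, b \in C^{\alpha}(\overline{\Omega}\times\R\times\R^n)$ with $\beta \le \alpha$, the frozen coefficients $x \mapsto a_{ij}(x,v(x),\nabla v(x))$ lie in $C^{\beta}(\overline{\Omega})$ and are uniformly elliptic on $\overline{\Omega}$ by hypothesis, and the right-hand side lies in $C^{\beta}(\overline{\Omega})$. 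Hence $T$ is well-defined by the classical Schauder existence and uniqueness theory for linear second-order elliptic equations on the $C^{2,\alpha}$ domain $\Omega$ with $C^{2,\alpha}$ boundary data. A fixed point $u = T(u,\sigma)$ is precisely a $C^{2,\beta}(\overline{\Omega})$ solution of $\mathcal{Q}_\sigma u = 0$ with $u|_{\partial\Omega}=\sigma f$, and $T(\cdot,0) \equiv 0$.

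Next I would verify the remaining hypotheses of Leray--Schauder. \emph{Compactness and continuity}: the interior and boundary Schauder estimates for the frozen linear problem give $\|T(v,\sigma)\|_{C^{2,\beta}(\overline{\Omega})} \le C$ whenever $v$ ranges over a bounded set of $C^{1,\beta}(\overline{\Omega})$, the constant $C$ depending only on the ellipticity constants, $\Omega$, and the $C^{\beta}$ norms of the frozen coefficients (hence on the bound for $v$); since $C^{2,\beta}(\overline{\Omega}) \hookrightarrow C^{1,\beta}(\overline{\Omega})$ is compact, $T$ maps bounded sets to precompact sets. Joint continuity of $(v,\sigma)\mapsto T(v,\sigma)$ follows from continuous dependence of the linear solution on its (continuously varying) coefficients and data, again via the Schauder estimates. \emph{A priori bound for fixed points}: any fixed point $u$ of $T(\cdot,\sigma)$ solves $\mathcal{Q}_\sigma u=0$ with $u|_{\partial\Omega}=\sigma f$, so by hypothesis $\sup_\Omega|u| + \sup_\Omega|\nabla u| < \mathcal{M}$, uniformly in $\sigma \in [0,1]$. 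This is only a $C^1$ bound, so I would upgrade it to a uniform $C^{1,\beta}$ bound by invoking the global H\"older gradient estimate quoted above from \cite[Theorem~13.2]{GT}: applied to $\mathcal{Q}_\sigma u=0$, it produces $[\nabla u]_{\beta,\Omega} \le C$ with exponent $\beta = \beta(n,\mathcal{M},\Omega)\in(0,\alpha]$ and $C$ depending only on $n$, $\mathcal{M}$, the ellipticity and structure constants, $\Omega$ and $\|f\|_{C^{2}(\overline{\Omega})}$. Choosing $\beta$ to be (the minimum of $\alpha$ and) this exponent, all fixed points of $T(\cdot,\sigma)$ for every $\sigma \in [0,1]$ lie in a single fixed ball of $C^{1,\beta}(\overline{\Omega})$.

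With these ingredients in place, \cite[Theorem~11.6]{GT} yields a fixed point $u \in C^{1,\beta}(\overline{\Omega})$ of $T(\cdot,1)$, i.e.\ a $C^{2,\beta}(\overline{\Omega})$ solution of $\mathcal{Q}u=0$, $u|_{\partial\Omega}=f$. A final bootstrap via linear Schauder theory promotes $u$ to $C^{2,\alpha}(\overline{\Omega})$: once $\nabla u \in C^{\beta}(\overline{\Omega})$ the frozen coefficients $a_{ij}(x,u,\nabla u)$ and $b(x,u,\nabla u)$ inherit the H\"older regularity of $u$'s gradient, and iterating the linear estimate using $a_{ij},b\in C^{\alpha}$ raises the regularity of $u$ to $C^{2,\alpha}(\overline{\Omega})$, as claimed. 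I expect the main obstacle to be precisely the passage from the assumed $C^1$ a priori bound to a \emph{uniform} $C^{1,\beta}$ bound with one exponent $\beta$ valid for the whole family $\sigma\in[0,1]$: this is where the global H\"older gradient estimate is essential, and one must check that its exponent and constants depend only on $\mathcal{M}$ and the fixed structural data, not on $\sigma$ or on the particular fixed point. The well-posedness, compactness, and continuity of $T$ are then routine consequences of the linear Schauder theory.
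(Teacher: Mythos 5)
Your argument is correct and is essentially the intended one: the paper states this theorem without proof, as a quotation of Gilbarg--Trudinger (their Theorem 11.4 together with the reduction of the required a priori bound from $C^{1,\beta}$ to $C^{1}$ via the global H\"older gradient estimates of their Chapter 13), and your Leray--Schauder scheme with the frozen-coefficient solution operator, the compact embedding into $C^{1,\beta}(\overline{\Omega})$, and the upgrade from the hypothesized $C^1$ bound to a uniform $C^{1,\beta}$ bound on the whole family of fixed points is exactly that proof. The only slip is cosmetic: for $v\in C^{1,\beta}(\overline{\Omega})$ and $a_{ij}\in C^{\alpha}$ the frozen coefficients $x\mapsto a_{ij}(x,v(x),\nabla v(x))$ lie in $C^{\alpha\beta}(\overline{\Omega})$ rather than $C^{\beta}(\overline{\Omega})$, so $T$ lands in $C^{2,\alpha\beta}(\overline{\Omega})$; this changes nothing, since that space still embeds compactly in $C^{1,\beta}(\overline{\Omega})$ and the final bootstrap to $C^{2,\alpha}(\overline{\Omega})$ proceeds as you describe.
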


\medskip

The following result is strictly contained in Theorem 15.11 in \cite{GT} (p. 381).

\begin{thm}[Existence]\label{Existence_0}
Let $\Omega$ be a bounded domain in $\R^n$ and suppose that $\mathcal Q$ is elliptic in $\overline{\Omega}$, with  $A \in C^{1,\ga}(\overline{\Omega}\times\R \times \R^n)$, $0<\gamma<1$, satisfying \eqref{Conds_11} and \eqref{Coercivity_0} with the additional restriction $\bt =\tau+2$. Then if $\partial\Omega\in C^{2,\ga}$, and for any $f\in C^{2,\ga}(\overline{\Omega})$ there exists a solution $u=u_f \in C^{2,\ga}(\overline{\Omega})$ of the problem $\mathcal Q u =0$ in $\Omega$, $u\big|_{\partial\Omega}=f$.
\end{thm}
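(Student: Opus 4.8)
The plan is to obtain this as an application of the Leray--Schauder theorem quoted immediately above, so that the entire argument reduces to producing a uniform $C^1$ a priori bound for the associated homotopy family. Set $\al=\ga$: since $A\in C^{1,\ga}(\overline\Omega\times\R\times\R^n)$, the coefficients $a_{ij}$ and $b$ in the non-divergence reformulation \eqref{Q_new} belong to $C^\ga(\overline\Omega\times\R\times\R^n)$, and $\mathcal Q$ is elliptic in $\overline\Omega$ by hypothesis; moreover $\partial\Omega\in C^{2,\ga}$ and $f\in C^{2,\ga}(\overline\Omega)$. Hence the hypotheses of the Leray--Schauder theorem are met as soon as we exhibit a constant $\mathcal M$, independent of $\sigma\in[0,1]$ and of $u$, such that every $C^{2,\ga}(\overline\Omega)$ solution of
\[
\mathcal Q_\sigma u = a_{ij}(x,u,\nabla u)\,\partial_{x_ix_j}^2 u + \sigma\, b(x,u,\nabla u)=0 \ \text{ in }\Omega, \qquad u\big|_{\partial\Omega}=\sigma f,
\]
satisfies $\sup_\Omega|u|+\sup_\Omega|\nabla u|<\mathcal M$. (At $\sigma=0$ the problem is solved by $u\equiv 0$, so the family is nonempty.)

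\textbf{The $C^0$ bound.} I would invoke the maximum principle for divergence-form operators (the theorem quoted from \cite[Thm.~10.9]{GT}) together with the coercivity condition \eqref{Coercivity_0}. Because $0\le\sigma\le 1$, the homotopy parameter enters favorably: the boundary datum obeys $\sup_{\partial\Omega}|\sigma f|\le\sup_{\partial\Omega}|f|$, and the lower-order perturbation $\sigma b$ does not spoil the coercivity estimate. This gives
\[
\sup_\Omega|u|\le M:=C(n,\bt,a_1,|\Omega|)\big(a_2+a_1\sup_{\partial\Omega}|f|\big)+\sup_{\partial\Omega}|f|,
\]
a bound independent of $\sigma$ and of the particular solution.

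\textbf{The gradient bounds.} First, $\partial\Omega\in C^{2,\ga}$ guarantees the uniform exterior sphere condition with some radius $\delta>0$. Applying Theorem~\ref{Boundary} to $\mathcal Q_\sigma$ — the structure conditions \eqref{Conds_11} transfer to $\mathcal Q_\sigma$ since $|\sigma b|\le|b|$, and $\|\sigma f\|_{C^2(\overline\Omega)}\le\|f\|_{C^2(\overline\Omega)}=:N$ — yields $\sup_{\partial\Omega}|\nabla u|\le C(n,M,\mu(M),N,\delta)$. Feeding this boundary gradient estimate into the global gradient estimate quoted above (Theorem~15.9 of \cite{GT}, valid for $\tau>-1$ under \eqref{Conds_11}) produces $\sup_\Omega|\nabla u|\le C(n,\tau,\nu(M),\mu(M),\partial\Omega,N,P)$ with $P:=\sup_\Omega|A(x,0,0)|$. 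All these constants depend only on the fixed data, not on $\sigma$ or $u$; combining with the $C^0$ bound yields the required uniform $\mathcal M$, and the Leray--Schauder theorem then furnishes, at $\sigma=1$, a solution $u=u_f\in C^{2,\ga}(\overline\Omega)$ of $\mathcal Q u=0$, $u|_{\partial\Omega}=f$.

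\textbf{Main obstacle.} I expect the delicate point to be verifying that all the constants above are genuinely independent of $\sigma$: one must check that the reformulation $a_{ij}\partial^2_{x_ix_j}u+\sigma b$ still meets the hypotheses of the maximum principle and of the two gradient-estimate theorems uniformly in $\sigma$, and — crucially — that the exponent restriction $\bt=\tau+2$ is precisely what reconciles the polynomial growth rate $|p|^\bt$ in the coercivity bound \eqref{Coercivity_0} with the degeneracy rate $(1+|p|)^\tau$ of the ellipticity constant $\la$ in \eqref{Conds_11}, so that the chain ``$C^0$ bound $\Rightarrow$ boundary gradient bound $\Rightarrow$ interior gradient bound'' closes. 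Everything else is a direct application of the imported machinery.
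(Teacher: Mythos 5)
Your proposal is correct and follows essentially the same route as the paper, which proves this result simply by citing Theorem 15.11 of Gilbarg--Trudinger and noting in the surrounding remarks that the argument is exactly the Leray--Schauder scheme you describe: the $C^0$ bound from the divergence-form maximum principle under \eqref{Coercivity_0}, the boundary gradient estimate of Theorem~\ref{Boundary}, and the global gradient estimate of Theorem 15.9, all uniform in the homotopy parameter. Your observation that $\bt=\tau+2$ is what reconciles the coercivity and ellipticity growth rates matches the paper's own remark on that hypothesis.
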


In the following remarks, we essentially explain how Theorem \ref{Existence_0} is proved.

\begin{rem}
The assumption $\bt=\tau +2$ is required to reconcile condition \eqref{Coercivity_0} with the first estimate in \eqref{Conds_11}.
\end{rem}

\begin{rem}
For the proof of Theorem \ref{Existence_0}, several intermediate steps are needed, parts of a main strategy invoking the Leray-Schauder fixed point theorem in H\"older spaces. In order to apply this result, one needs to show some Ladyzenskaja-Ural'ceva a-priori interior and boundary estimates in $C^{1,\ga}$, for solutions to the problem $\mathcal Qu =0$ in $\Omega$, which are established through Chapters 10 , 13, 14 and 15 in \cite{GT}. See the comments after Theorem 15.11 in \cite{GT} for full details.
\end{rem}

\begin{rem}
The assumption $\partial\Omega\in C^{2,\ga}$ is needed to ensure the so-called \emph{exterior sphere condition} for every point in $\partial\Omega$. 
\end{rem}

\begin{rem}\label{Rem2p16}
Conditions \eqref{Conds_11} are only sufficient for obtaining existence for the problem $\mathcal Q u=0$, however, there are examples (see Chapter 14, Section 14.4 in \cite{GT}) that show that the absence of some of these assumptions leads to nonexistence results. Usually, the estimate that fails is the control of the derivative of $u$ on the boundary.  
\end{rem}


\medskip

\subsection{Applications} We will apply Theorems \ref{Uniqueness} and \ref{Existence_0} to show the existence of a unique solution for the direct problem associated to the quasilinear problem \eqref{IP}. A first result deals with the solvability problem for scalar quasilinear problems. Before we need some notations. Assume that 
\be\label{A=ap}
A(x,s,p) := a(x,s,p)p, \quad a\geq 1.
\ee
Then $a_{ij}$ in \eqref{aij} is given by
\[
a_{ij}(x,s,p):= \frac12 ((\partial_{p_i} a)(x,s,p)\, p_j+(\partial_{p_j} a)(x,s,p)  \, p_i).
\]
We will assume that 
\be\label{Ellip0}
\hbox{$a_{ij}$ is elliptic in $\overline{\Omega}$},
\ee
as in \eqref{Elliptic1}, with involved parametric constants  $\la(x,s,p)$ and $\Lambda(x,s,p)$ respectively (see \eqref{Elliptic1}).  Additionally, we will assume that
\be\label{Conds_12}
\begin{cases}
\begin{aligned}
\la (x,s,p)   \geq &~ \nu (|s|)>0  ,  \\
|p| |\nabla_p a(x,s,p)| +  |a(x,s,p)| \leq & ~ \mu(|s|), \\
(1+|p|)  |\partial_s a (x,s,p) |  +   |\nabla_x a(x,s,p)| \leq & ~ \mu(|s|) |p|,
\end{aligned}
\end{cases}
\ee
for $\nu$ (resp. $\mu$) positive and non-increasing (resp. non-decreasing) in $|s|$.  These conditions essentially say that $a$ must be bounded with derivatives of the right order. 

\begin{thm}\label{Regularity_1}
Consider the quasilinear problem posed in a bounded domain $\Omega \subseteq\R^n$ of class $C^{2,\al}$, $0<\al<1$, and $n\geq 2$, for $u:\Omega\to \R$ real-valued:
\be\label{Quasi_0}
\Dv (a(x,u,\nabla u)\nabla u) =0 \hbox{ in }\Omega, \quad u\big|_{\partial\Omega}=f.
\ee
Assume that $a\in C^{1,\al}(\overline{\Omega} \times \R \times  \R^n)$, and $a\geq 1$, and that \eqref{Ellip0} and \eqref{Conds_12} are satisfied. 
Then the direct problem \eqref{Quasi_0} is uniquely solvable for $u$ in $C^{2,\al}$, i.e., for any $f\in C^{2,\al}(\partial\Omega)$, there exists a unique solution $u=u_f \in C^{2,\al}(\overline{\Omega})$. 
\end{thm}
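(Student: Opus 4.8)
The plan is to verify that the quasilinear problem \eqref{Quasi_0} fits precisely into the framework of the cited Gilbarg--Trudinger existence and uniqueness results (Theorems \ref{Uniqueness} and \ref{Existence_0} above), so that solvability follows by matching hypotheses. The two ingredients are: (i) existence, via the Leray--Schauder argument of Theorem \ref{Existence_0}, which requires $\mathcal{Q}$ to be elliptic in $\overline\Omega$, $A\in C^{1,\ga}$, and the structural conditions \eqref{Conds_11} together with the coercivity \eqref{Coercivity_0} under the reconciliation $\bt=\tau+2$; and (ii) uniqueness, via Theorem \ref{Uniqueness}, which only needs ellipticity of $\mathcal Q$ in $\Omega$ and $C^1$ dependence of $A$ on $(s,p)$.

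First I would record that with $A(x,s,p)=a(x,s,p)p$ and $a\in C^{1,\al}(\overline\Omega\times\R\times\R^n)$, the field $A$ is $C^{1,\al}$, and the matrix $a_{ij}$ computed in \eqref{aij} coincides with the one displayed right after \eqref{A=ap}; hypothesis \eqref{Ellip0} is exactly the ellipticity of $\mathcal Q$ in $\overline\Omega$ demanded by both theorems. Next I would translate the structural conditions. The key point is that $a\geq 1$ forces, via Remark \ref{Rem_0}, the coercivity inequality $p\cdot A(x,s,p)=a(x,s,p)|p|^2\geq |p|^2$, i.e. \eqref{Coercivity_0} holds with $\bt=2$, $a_1=a_2=0$. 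To match Theorem \ref{Existence_0} we therefore want $\tau=0$, and I would check that the three inequalities in \eqref{Conds_12}, written for $A=ap$, imply the three inequalities in \eqref{Conds_11} with $\tau=0$: the lower bound $\la\geq\nu(|s|)$ gives the first line; $\|\nabla_p A\|\lesssim |\nabla_p a|\,|p|+|a|\leq\mu(|s|)$ (using $\|D_pA\|\sim |p||\nabla_pa|+|a|$) gives the second with $(1+|p|)^\tau=1$; and $(1+|p|)|\partial_s A|+\|D_xA\|\lesssim (1+|p|)|\partial_s a||p|+|\nabla_x a||p|\leq\mu(|s|)|p|^2=\mu(|s|)|p|^{\tau+2}$ gives the third. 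Here I would be slightly careful with constants depending only on $n$, absorbing them into a relabeled non-decreasing $\mu$. With $\tau=0$ we have $\bt=\tau+2=2$, so the additional restriction in Theorem \ref{Existence_0} is met, $\partial\Omega\in C^{2,\al}$ by hypothesis, and for $f\in C^{2,\al}(\overline\Omega)$ (the excerpt states $f\in C^{2,\al}(\partial\Omega)$, which we extend as in the footnote convention) existence of $u_f\in C^{2,\al}(\overline\Omega)$ follows.

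For uniqueness I would simply invoke Theorem \ref{Uniqueness}: given two solutions $u,v\in C^{2,\al}(\overline\Omega)\subseteq C^2(\overline\Omega)$ of \eqref{Quasi_0} with the same boundary data, ellipticity in $\Omega$ (a consequence of \eqref{Ellip0}) and the $C^1$ regularity of $A=ap$ in $(s,p)$ give $u\equiv v$. Combining the two parts yields unique solvability. The main obstacle I anticipate is purely bookkeeping rather than conceptual: making sure the exponent matching ($\tau=0$, $\bt=2$) is consistent across \eqref{Conds_11}, \eqref{Coercivity_0} and the ``$\bt=\tau+2$'' clause of Theorem \ref{Existence_0}, and checking that the norm equivalences $\|D_pA\|\sim |p|\,|\nabla_p a|+|a|$ and $|b(x,s,p)|\leq |(\Dv_x A)|+\|\partial_s A\|\,|p|$ used implicitly in the derivation of \eqref{New_Cond_0}--\eqref{New_Cond_1} really do follow from the product structure $A=ap$ with the stated bounds on $a$ and its first derivatives. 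Once those elementary estimates are in place, the theorem is immediate from the cited results.
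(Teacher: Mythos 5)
Your proposal is correct and follows exactly the route of the paper's own (much terser) proof: existence via Theorem \ref{Existence_0} applied to $A=ap$ with $\tau=0$, $\bt=2$ (using $a\geq 1$ and Remark \ref{Rem_0} for the coercivity \eqref{Coercivity_0}, and checking that \eqref{Conds_12} implies \eqref{Conds_11}), and uniqueness via Theorem \ref{Uniqueness}. The bookkeeping you carry out explicitly is precisely what the paper leaves implicit, so there is nothing to add.
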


\begin{proof}
The existence part is essentially Theorem \ref{Existence_0} with $A$ given by \eqref{A=ap}, and $ \al = 2$, $\tau =0$. Note also that conditions \eqref{Conds_11} are satisfied by assuming the conditions in \eqref{Conds_12}. For the uniqueness part, it is enough to invoke Theorem \ref{Uniqueness}.
\end{proof}



\begin{rem}
Let us comment about the meaning of assumptions \eqref{Conds_12}. They state, among other things, that the conductivity must be {\bf bounded} uniformly in $x\in \Omega$ and $p \in \R^n$. In other words, it is not allowed to have e.g.
\[
a(x,s,p) \sim |p|^2.
\] 
This requirement can be understood as a {\bf smallness}  condition for the gradients of solutions to $\mathcal Q u=0$. We will see later that this condition appears in a different form in our main results.
\end{rem}

One of the main consequences of the previous result is the following existence result for the DN map. 

\begin{cor}\label{Regularity_2}
Consider the quasilinear problem \eqref{Quasi_0}. Under the assumptions and conclusions of Theorem \ref{Regularity_1}, the Dirichlet-to-Neumann map 
\be\label{DNa}
C^{2,\al}(\partial\Omega) \ni f \longmapsto \Gamma_a[f] := a(x,u,\nabla u) \nabla u \cdot \nu \big|_{\partial\Omega} \in C^{1,\al}(\partial\Omega),  
\ee
where $u=u_f$ is the solution of \eqref{Quasi_0}, is well-defined and bounded.
\end{cor}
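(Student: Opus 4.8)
The statement has two parts: that $\Gamma_a$ is well-defined as a map $C^{2,\al}(\partial\Omega)\to C^{1,\al}(\partial\Omega)$, and that it is \emph{bounded}, which for the nonlinear map $\Gamma_a$ we read as: bounded sets of $C^{2,\al}(\partial\Omega)$ are sent to bounded sets of $C^{1,\al}(\partial\Omega)$.

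For well-definedness, the plan is as follows. Given $f\in C^{2,\al}(\partial\Omega)$ (with a fixed $C^{2,\al}(\overline\Omega)$ representative as in \eqref{Quasi_00}), Theorem \ref{Regularity_1} provides a \emph{unique} $u=u_f\in C^{2,\al}(\overline\Omega)$ solving \eqref{Quasi_0}; in particular $\nabla u\in C^{1,\al}(\overline\Omega;\R^n)$, so the map $x\mapsto(x,u(x),\nabla u(x))$ is $C^{1,\al}$ from $\overline\Omega$ into $\overline\Omega\times\R\times\R^n$. Since $a\in C^{1,\al}(\overline\Omega\times\R\times\R^n)$, the composition $x\mapsto a(x,u(x),\nabla u(x))$ lies in $C^{1,\al}(\overline\Omega)$, by the chain rule together with the standard fact that $C^{1,\al}$ is a Banach algebra stable under composition on the left with $C^{1,\al}$ maps. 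Hence the vector field $a(x,u,\nabla u)\nabla u$ belongs to $C^{1,\al}(\overline\Omega;\R^n)$; its trace on $\partial\Omega$ belongs to $C^{1,\al}(\partial\Omega;\R^n)$; and, using $\partial\Omega\in C^{2,\al}$ so that $\nu\in C^{1,\al}(\partial\Omega;\mathbb S^{n-1})$, the normal component $a(u,\nabla u)\nabla u\cdot\nu|_{\partial\Omega}=\Gamma_a[f]$ lies in $C^{1,\al}(\partial\Omega)$. Uniqueness of $u_f$ makes the assignment $f\mapsto\Gamma_a[f]$ single-valued.

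For boundedness, the plan is to chain the a priori estimates recollected in Section \ref{2}. Fix $f$ with $\|f\|_{C^{2,\al}(\overline\Omega)}\le N$. The maximum principle (Theorem 10.9 of \cite{GT}; in our normalization $a\ge1$ one may use Remark \ref{Rem_0}) gives $M:=\sup_\Omega|u_f|\le C(N)$. Then the boundary gradient estimate of Theorem \ref{Boundary} bounds $\sup_{\partial\Omega}|\nabla u_f|$ in terms of $n,M,\mu(M),N,\delta$, the global gradient estimate \eqref{Gradient_Interior} upgrades this to a bound on $\sup_\Omega|\nabla u_f|$, and the interior/boundary Hölder gradient estimate (Theorem 13.2 of \cite{GT}) controls $[\nabla u_f]_{\al,\Omega}$; altogether $\|u_f\|_{C^{1,\al}(\overline\Omega)}\le C_1(N)$. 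Finally, reading \eqref{Q_new} as a linear uniformly elliptic equation for $u_f$ with coefficients $a_{ij}(x,u_f,\nabla u_f)$ and $b(x,u_f,\nabla u_f)$, which lie in $C^\al(\overline\Omega)$ with norms bounded by $C_1(N)$ and $\|a\|_{C^{1,\al}}$, the global Schauder estimate yields $\|u_f\|_{C^{2,\al}(\overline\Omega)}\le C_2(N)$. Combining the composition and product estimates once more, $\|a(\cdot,u_f,\nabla u_f)\nabla u_f\|_{C^{1,\al}(\overline\Omega)}\le C(\|a\|_{C^{1,\al}},C_2(N))$, and trace to $\partial\Omega$ followed by contraction with $\nu\in C^{1,\al}$ gives $\|\Gamma_a[f]\|_{C^{1,\al}(\partial\Omega)}\le C_3(N)$, which is the asserted boundedness.

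The main obstacle is the uniform a priori $C^{2,\al}$ estimate for $u_f$, with constants depending only on $n$, $\Omega$, the structural functions $\nu(\cdot),\mu(\cdot)$ of \eqref{Conds_12}, $\|a\|_{C^{1,\al}}$ and $N$: one must check that the chain of Chapter 10--15 estimates of \cite{GT} applies verbatim under \eqref{Ellip0}--\eqref{Conds_12}, and in particular that the passage from the $C^{1,\al}$ bound to the $C^{2,\al}$ bound via Schauder introduces no dependence on the (unknown) solution beyond its already controlled $C^{1,\al}$ norm. The $C^{1,\al}$ composition estimate for $a(x,u_f,\nabla u_f)$ is routine once $u_f\in C^{2,\al}$ is in hand, but should be invoked explicitly.
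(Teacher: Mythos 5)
Your proposal is correct and follows exactly the route the paper intends: the paper states this corollary without an explicit proof, treating it as an immediate consequence of the unique $C^{2,\al}$ solvability and the a priori estimates of Theorem \ref{Regularity_1} and Section \ref{2}, which is precisely the chain (maximum principle, boundary and global gradient bounds, H\"older gradient estimate, Schauder, then composition and trace) that you spell out.
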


It seems reasonable now to deal with the inverse problem associated to the quasilinear problem \eqref{Quasi_0}. However, we will see later in this paper that, if we want to recover the conductivity $a=a(x,s,p)$ for $p\neq 0$, it is better to consider complex-valued solutions for \eqref{Quasi_0}. However, the solvability theory for this type of solutions is, as far as we know, far from being completely understood. For this reason, we will have to make a digression from the standard theory and prove some particular existence theorems for complex-valued solutions of \eqref{Quasi_0}. The fact that there are explicit solutions in some particular cases will be essential for the uniqueness proof.  

\medskip

Before treating in detail the full quasilinear problem, it is somehow better to understand a simplified, complex-valued coefficients, linear problem.

\bigskip

\section{Solvability for a complex-valued linear problem}\label{3}

\medskip

\subsection{Preliminaries} Let $g\in C^{0,\al}(\overline{\Omega})$ and 
$h\in C^{2,\al}(\overline{\Omega})$, $0<\al<1$ denote two fixed ``source'' functions. The purpose of this Section is to develop a solvability theory for  the linear direct problem for the unknown function $v=v(x)$
\be\label{a0new}
\begin{aligned}
\Dv_x  \Big[ a(u_{s,p},p) \nabla v    + p\big\{ (\nabla_p a)(u_{s,p},p)  \cdot  \nabla v  +  (\partial_s a)(u_{s,p} ,p)  v  \big\}  \Big]  = & ~ g ~ \hbox{ in }\Omega,\\
 v\big|_{\partial\Omega} = & ~ h.
\end{aligned}
\ee
Note that $u_{s,p} =s+x\cdot p$, with $s\in \R$ and $p\in \Com^n$ is such that $p\cdot p =0$ (see \eqref{Usp}). From hypothesis (H2) in p. \pageref{H1}, when considering the expression $a(u_{s,p},p)$ we are using the fact that $a$ admits an analytic continuation to the region $\mathcal R \times B_{r_0}(0)$, for $p\in B_{r_0}(0)$ and $r_0$ small if needed. In that sense, this problem has complex-valued coefficients, but it still preserves its divergence form. Finally, note that the terms
\[
(\nabla_p a)(u_{s,p},p) , \quad (\partial_s a)(u_{s,p} ,p),
\]
denote the functions $\nabla_p a(s,p)$ and $\partial_s a(s,p)$ evaluated at the point $(u_{s,p} ,p)$, respectively (and if no confusion arises, we will drop the parentheses).

\medskip

Problem \eqref{a0new} is the key element to understand in this paper. It will be essential to show solvability for the quasilinear case studied in Section \ref{4}, and additionally, it will play an important role in the associated, quasilinear inverse problem (cf. Sections \ref{5} and \ref{6}). 

\medskip

It is not difficult to identify the main symbol of the problem above. It turns out that we can write \eqref{a0new} as
\[
\Dv_x (\widetilde A(x,s,p) \nabla v + \widetilde b(x,s,p) v) =g, \quad \hbox{in }\Omega, \quad v\big|_{\partial\Omega}=h, 
\]
and where ($I_n$ is the $n\times n$ identity matrix)
\be\label{Asp}
\begin{aligned}
\widetilde A(x,s,p) :=  &~ a(u_{s,p},p)I_n +  p\, \nabla_p a(u_{s,p},p)^T  \in \Com^{n\times n}, \\
\widetilde b(x,s,p) := & ~ \partial_s a(u_{s,p} ,p) \, p \in \Com^n.
\end{aligned}
\ee
Both coefficients have complex-valued components. The $n\times n$ matrix $\widetilde A_{ij}$ is not symmetric nor diagonal, and the term $p\, \nabla_p a(u_{s,p},p)^T$ may be a \emph{very large perturbation} (in terms of its absolute value) of the main part $ a(u_{s,p},p)I_n$, in such a form that it is very probable that $\widetilde A$ is no longer elliptic, so that the nature of the Dirichlet boundary value problem may be completely different to a standard one.

\medskip

The following result states that for $p$ small enough, the matrix $\widetilde A$ from \eqref{Asp} is uniformly elliptic.

\begin{lem}\label{Small}
Assume that $(s,p)\in \R \times \Com^n$, and $x\in \Omega$. Assuming $r_0 >0$ in \eqref{mR} smaller if necessary, the following is satisfied. For all $|p| <r_0$, the complex valued matrix $\widetilde A$ in \eqref{Asp} is elliptic in the sense of Remark \ref{Elliptic1_complex}, and the vector field $\widetilde b$ in \eqref{Asp} is also uniformly bounded.
\end{lem}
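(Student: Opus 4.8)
The plan is to treat $\widetilde A$ as a perturbation of the scalar matrix $a(u_{s,p},p)I_n$ and to show that for $|p|$ small the perturbation $p\,\nabla_p a(u_{s,p},p)^T$ is small in operator norm, uniformly in $x$ and $s$. First I would observe that by hypothesis (H3), in particular the bound $|a(s,p)|+|\nabla_p a(s,p)|\leq\Lambda$ on $\mathcal R\times B_{r_0}(0)$, together with the fact that $u_{s,p}=s+x\cdot p$ stays in $\mathcal R$ whenever $s\in\R$ and $|p|<r_0$ is small (because $|\ima(x\cdot p)|\leq |x||p|\leq R_0$ for $\Omega$ bounded and $r_0$ chosen accordingly), both $a(u_{s,p},p)$ and $\nabla_p a(u_{s,p},p)$ are bounded by $\Lambda$ uniformly. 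Hence $\|p\,\nabla_p a(u_{s,p},p)^T\|\leq |p|\,\Lambda\leq r_0\Lambda$, which we can make as small as we like by shrinking $r_0$.

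Next I would compute the symmetrized matrix $a_{ij}$ attached to the field $\widetilde A(x,s,p)\nabla v+\widetilde b(x,s,p)v$ in the sense of \eqref{aij_complex}; since $\widetilde A$ has entries that depend on $x$ only through $u_{s,p}=s+x\cdot p$ (an affine function), the $\partial_{x_j}\widetilde A_i$ terms contribute only lower-order pieces proportional to $p$, so the principal part of $a_{ij}$ is the symmetric part of $\widetilde A_{ij}$ itself, namely $a(u_{s,p},p)\delta_{ij}$ plus the symmetrization of $p\,\nabla_p a^T$. For $\xi\in\Com^n$ we then estimate
\[
\re\big(a_{ij}\xi_i\overline{\xi_j}\big)=\re\big(a(u_{s,p},p)\big)|\xi|^2+\re\big((p\cdot\xi)\,(\nabla_p a(u_{s,p},p)\cdot\overline{\xi})\big)+(\text{terms }O(|p|)|\xi|^2).
\]
By (H3) the first term is $\geq\tilde\la|\xi|^2$, and the remaining terms are bounded in absolute value by $C r_0\Lambda|\xi|^2$ for a dimensional constant $C$. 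Choosing $r_0$ small enough that $Cr_0\Lambda<\tfrac12\tilde\la$ gives the lower bound $\re(a_{ij}\xi_i\overline{\xi_j})\geq\tfrac12\tilde\la|\xi|^2$, while the upper bound $\leq(\Lambda+Cr_0\Lambda)|\xi|^2\leq 2\Lambda|\xi|^2$ is immediate from the same estimates. This is exactly ellipticity in the sense of Remark~\ref{rem_complex}, with constants $\la=\tfrac12\tilde\la$ and $\Lambda'=2\Lambda$ independent of $x$, hence uniform. The bound on $\widetilde b=\partial_s a(u_{s,p},p)\,p$ is even easier: the first growth condition in (S3) (or, in the analytic setting, a Cauchy-estimate consequence of (H2)–(H3)) controls $|\partial_s a(u_{s,p},p)|$ on the band, so $|\widetilde b|\leq C|p|\leq Cr_0$ is uniformly bounded.

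The main obstacle I anticipate is not the size estimate but making sure the "lower-order in $p$" terms coming from $\partial_{x_j}\widetilde A_i$ and from differentiating $a(u_{s,p},p)$ in $x$ are genuinely controlled: one must verify that $\partial_{x_j}$ hitting $u_{s,p}$ produces a factor $p_j$ and that the resulting bound still uses only the hypothesized bounds on $\nabla_p a$, $\partial_s a$ and their first derivatives (which is where the analyticity in (H2) is used, via Cauchy estimates, to bound second-order derivatives of $a$ on a slightly smaller band/ball). Once that bookkeeping is done, the whole statement reduces to "a small perturbation of a uniformly positive scalar matrix is uniformly elliptic," and the proof is a one-paragraph triangle-inequality argument with $r_0$ chosen at the end in terms of $\tilde\la$, $\Lambda$, $n$, and $\diam(\Omega)$.
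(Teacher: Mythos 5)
Your proposal is correct and follows essentially the same argument as the paper: check that $u_{s,p}=s+x\cdot p$ stays in the band $\mathcal R$ for $r_0$ small, use (H3) to get $\re a(u_{s,p},p)\geq\tilde\la$ and $|\nabla_p a|\leq\Lambda$, and absorb the rank-one perturbation $p\,\nabla_p a^T$ (of size $O(r_0)$) into the coercive scalar part, with the bound on $\widetilde b$ coming the same way. The extra bookkeeping you anticipate about symmetrization and $\partial_{x_j}$ derivatives is not needed, since the paper's ellipticity claim is verified directly on the matrix $\widetilde A_{ij}$ itself.
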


\begin{proof}
Since from \eqref{H30} we have $\re a(s,p) >\la(s,p) >0$, we only have to show that for $p$ small this inequality is preserved. We have
\[
\begin{aligned}
\re \big(\widetilde A_{ij}(x,s,p) \xi_i \overline{\xi_j}\big) = & ~ \re \big( a(u_{s,p},p)|\xi|^2+  p_i \partial_{p_j} a(u_{s,p},p) \xi_i \overline{\xi_j}\big)\\
=& ~ \re  a(u_{s,p},p)  |\xi|^2+  \re (p_i \partial_{p_j} a(u_{s,p},p) \xi_i \overline{\xi_j}\big).
\end{aligned}
\]
Recall that we have $|p|<r_0$. Now, since $\Omega$ is bounded, $u_{s,p} =s+x\cdot p$ lies inside a narrow horizontal band of the complex plane, of the form
\[
\R\times [- Cr_0,Cr_0], \quad C=C(\Omega)>0.
\]
Therefore, if $r_0$ is chosen small enough, 
\[
\R\times [- Cr_0,Cr_0 ] \subseteq \mathcal R \qquad \hbox{(see \eqref{mR}).}
\]
Consequently,  by hypothesis (H2) (see p. \pageref{H1}), $a$ and its derivatives are well-defined and bounded in the set ${\mathcal R} \times {B}_r(0)$. 
\[
\abs{\re (p_i \partial_{p_j} a(u_{s,p},p) \xi_i \overline{\xi_j}\big)} \leq  r_0 |\xi|^2 \times \sup_{(\tilde s,\tilde p)\in {\mathcal R} \times {B}_{r_0}(0)} |\nabla_p a(\tilde s,\tilde p)| \leq C r_0|\xi|^2. 
\]
Additionally, using \eqref{H30} and the continuity of $\la$ (taking $r_0$ smaller if necessary),
\[
\begin{aligned}
\re  a(u_{s,p},p)  \geq &~ \inf_{(\tilde s,\tilde p) \in  {\mathcal R} \times {B}_{r_0}(0) }\re  a(\tilde s, \tilde p) \\
 \geq &~ \inf_{(\tilde s,\tilde p) \in  {\mathcal R} \times {B}_{r_0}(0) }\la(\tilde s,\tilde p) \geq \tilde \la>0.
\end{aligned}
\]
Consequently, for $r_0$ small,
\be\label{Coercivity_complex}
\re \big(\widetilde A_{ij}(x,s,p) \xi_i \overline{\xi_j}\big) \geq \frac{9}{10}\tilde \la  |\xi|^2.
\ee
On the other hand, note that in the region $\mathcal R\times B_{r_0}(0)$,
\be\label{Upper_A}
\| \widetilde A(x,s,p)\| \leq \Lambda + C|p| \leq 2 \Lambda, 
\ee
and from \eqref{Asp},
\be\label{Upper_b}
| \widetilde b(x,s,p)| \leq C|p|  \leq Cr_0.
\ee
\end{proof}
\begin{rem}
Lemma \ref{Small} and hypothesis (H3) (see \eqref{H30}) can be weakened by asking for $\tilde \la$ depending on $s$, under suitable assumptions on $a(\cdot, \cdot)$ and it first derivatives, in such a form that  \eqref{Coercivity_complex} is satisfied with a positive lower bound $\tilde \la$ depending on $s$ also.
\end{rem}

\subsection{Existence for a linear complex-valued problem} Now we will apply Lemma \ref{Small} to show existence for the problem \eqref{a0new}.

\begin{thm}\label{Linear_complex}
Let $g\in C^{0,\al}(\overline{\Omega})$ and $h\in C^{2,\al}(\overline{\Omega})$, $0<\al<1$, be two fixed data. Under the assumptions of Lemma \ref{Small}, problem \eqref{a0new} has a unique, complex-valued solution $v= v_{g,h}$ in the class $C^{2,\al}(\overline{\Omega})$. Moreover, one has the estimate
\be\label{Est_apriori}
\|v\|_{C^{2,\al}} \leq C (\|h\|_{C^{2,\al}(\partial\Omega)} + \|g\|_{C^{0,\al}(\Omega)}), \quad C=C(\la,\Lambda,r_0).
\ee
\end{thm}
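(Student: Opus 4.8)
The plan is to reduce \eqref{a0new} to a standard linear elliptic Dirichlet problem in divergence form with complex-valued but uniformly elliptic leading coefficients, and then invoke the Schauder theory (real and imaginary parts separately, or the complex Schauder estimates directly). First I would rewrite \eqref{a0new} in the form $\Dv_x(\widetilde A(x,s,p)\nabla v + \widetilde b(x,s,p)v)=g$ with $\widetilde A$, $\widetilde b$ as in \eqref{Asp}, and homogenize the boundary data: set $w := v - h$, so that $w$ solves a problem of the same type with zero boundary data and a modified right-hand side $\tilde g := g - \Dv_x(\widetilde A\nabla h + \widetilde b\, h) \in C^{0,\al}(\overline\Omega)$ (here one uses $h\in C^{2,\al}$ and $\widetilde A,\widetilde b\in C^{1,\al}$, the latter coming from (H2)--(H3) and the smoothness of $u_{s,p}$ in $x$). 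Expanding the divergence, $w$ solves $\widetilde A_{ij}\partial^2_{ij} w + \tilde c_i \partial_i w + \tilde d\, w = \tilde g$ with $\widetilde A_{ij}$ having the coercivity \eqref{Coercivity_complex} from \lemref{Small} and with $\tilde c_i,\tilde d\in C^{0,\al}(\overline\Omega)$ bounded by $C(\la,\Lambda,r_0)$.

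The core of the argument is existence and uniqueness for this complex linear problem. \emph{Uniqueness}: if $w$ has zero boundary data and $\widetilde A_{ij}\partial^2_{ij}w + \tilde c_i\partial_i w + \tilde d\, w = 0$, I would test the weak formulation against $\overline{w}$, integrate by parts, and take real parts; the coercivity $\re(\widetilde A_{ij}\xi_i\overline{\xi_j})\geq \tfrac{9}{10}\tilde\la|\xi|^2$ gives control of $\|\nabla w\|_{L^2}^2$ up to lower-order terms, and a standard absorption/Poincar\'e argument forces $w\equiv 0$ — provided the zeroth-order term $\tilde d$ (proportional to $\partial_s a$ and to $p$, hence $O(r_0)$ by the second line of \eqref{H30}, or more precisely one extracts smallness from $|p|<r_0$) can be absorbed; shrinking $r_0$ if necessary handles this. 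Alternatively one can cite \thm~\ref{Uniqueness} applied to real and imaginary parts, but the energy argument is cleaner in the complex-coefficient setting. \emph{Existence}: with uniqueness in hand, I would use the method of continuity, deforming $\widetilde A$ to the identity (or to $\re a(u_{s,0},0)I_n$) along the path $\widetilde A_t := (1-t)\,(\tilde\la I_n) + t\,\widetilde A$, which stays uniformly elliptic in the sense \eqref{Elliptic1_complex} for all $t\in[0,1]$ since the real part of the quadratic form is a convex combination of things $\geq \tfrac{9}{10}\tilde\la|\xi|^2$; the a priori Schauder bound \eqref{Est_apriori} (applied to real and imaginary parts, using Theorem 6.14-type estimates from \cite{GT}) is uniform along the path, so the set of $t$ for which the problem is solvable is open and closed in $[0,1]$, and solvability at $t=0$ is classical.

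The a priori estimate \eqref{Est_apriori} then follows from the same complex Schauder estimate: write $v = w + h$, apply global Schauder to $w$ with zero boundary data, and add $\|h\|_{C^{2,\al}}$, absorbing the lower-order contributions of $v$ on the right by the uniqueness-based argument (no constant appears because the homogeneous problem has only the trivial solution). I expect the main obstacle to be purely bookkeeping rather than conceptual: namely, verifying carefully that the complex-coefficient Schauder theory applies — since \cite{GT} states everything for real scalar equations, one must either split into the $2\times 2$ real elliptic system for $(\re v,\ima v)$ and check that \eqref{Coercivity_complex} yields the Legendre--Hadamard (or Legendre) condition for that system with constants depending only on $\la,\Lambda,r_0$, or cite a complex Schauder reference. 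A secondary subtlety is making sure all estimates are uniform in the parameters $(s,p)$ over $\R\times B_{r_0}(0)$, i.e. that the constant $C$ in \eqref{Est_apriori} genuinely depends only on $\la,\Lambda,r_0$ and not on $s$; this uses the homogeneity (H1) (no $x$-dependence beyond $u_{s,p}$) together with the translation-invariance of $\Omega$ up to the fixed boundedness constant $C(\Omega)$ appearing in \lemref{Small}.
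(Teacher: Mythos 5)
Your proposal is correct and follows exactly the route the paper intends: the paper's own ``proof'' of Theorem \ref{Linear_complex} is a one-line appeal to ``the standard procedure to show solvability of linear elliptic PDEs,'' and your argument (homogenize the boundary data, energy/Poincar\'e uniqueness using the coercivity \eqref{Coercivity_complex} and the smallness $|\widetilde b|\leq Cr_0$ from Lemma \ref{Small}, then method of continuity with Schauder estimates for the equivalent $2\times 2$ real strongly elliptic system) is precisely the standard procedure spelled out, including the two genuine checkpoints the paper glosses over — that \eqref{Elliptic1_complex} yields the Legendre condition for the real system, and that the $\|w\|_{C^0}$ term in the Schauder bound is removed via uniqueness rather than a maximum principle. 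No gap.
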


\begin{proof}
The proof of this result is based in the standard procedure to show solvability of linear elliptic PDEs. 

%
%
%
%
%
%
%
%

\end{proof}

We will see later, in Chapters \ref{5} and \ref{6}, that the linear problem \eqref{a0new} appears naturally  in the study of the quasilinear inverse problem \eqref{IPnew}. Theorem \ref{Linear_complex} will be applied in the next section in order to get the desired solvability for problem \eqref{IPnew}. 

\bigskip

\section{Solution for the quasilinear complex case}\label{4}

\medskip

\subsection{A model example} Now we make a small digression from the main subject of this paper. In this subsection we will consider the Calder\'on direct problem in $\Omega\subseteq \R^n$ bounded
\be\label{model}
\Dv_x (a(x) \nabla u ) =0 \quad \hbox{ in } \Omega, \quad u\big|_{\partial\Omega} =f,
\ee
where $a\in C^{1,\al}(\overline\Omega)$ is uniformly positive, and $f \in C^{2,\al}(\overline\Omega)$, with $\partial\Omega \in C^{2,\al}$, for some $0<\al<1$. Clearly \eqref{model} has a unique real-valued solution $u=u_f \in C^{2,\al}$. Moreover, there exists a solution in standard Sobolev spaces even if $f$ is assumed less regular than H\"older. 

\medskip

The problem now is to get some insight about the same problem when now $a(\cdot)$ is assumed to be complex-valued, namely $a:\Omega \longrightarrow \Com$. Since solvability for \eqref{model} in the real and complex-valued case is related to the Riesz Theorem (or Lax-Milgram Theorem), a sufficient condition to find a solution in a Sobolev space is the ellipticity condition (see Remark \ref{rem_complex}) 
\[
0<\la \leq \re a(x) \leq \Lambda<+\infty, \quad x\in \Omega.
\]
The case where $a(\cdot)$ depends now on $x$, $u(x)$ and $\nabla u(x)$ will have require some new (but standard, in view of Section \ref{2}) restrictions, because $\la$ may now depend on $u$ and $\nabla u$. It turns out that, for some particular reasons, it is good to have a good control of the dependence on $u$ of the lower bound $\la(u,\nabla u)$. This control will be important to obtain a desired ellipticity for our problem.

\subsection{Existence close to a given solution} We will consider $s\in \R$ and $p\in \Com^n$ fixed, with $p\cdot p =0$. Recall the linear affine function $u_{s,p}(x) $ defined in \eqref{Usp}. The following is clearly satisfied:

\begin{Cl}\label{Cl_0}
Assume that $a=a(s,p)$ is a complex-valued, homogeneous conductivity, analytic for $(s,p)\in \Com \times\Com^n$. Then $u_{s,p}$ solves \eqref{IPnew} with $f=u_{s,p}$, that is,
\be\label{Cl_1}
\Dv_x (a(u_{s,p}(x),\nabla u_{s,p}(x))\nabla u_{s,p}(x)) = 0 ~ \hbox{ in }~ \Omega.
\ee
\end{Cl}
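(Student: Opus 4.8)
The plan is to verify \eqref{Cl_1} by a direct computation, exploiting the very special affine structure of $u_{s,p}(x) = s + x\cdot p$ together with the hypothesis $p\cdot p = 0$. First I would record the two elementary facts about the test function: $\nabla u_{s,p}(x) = p$ (a constant complex vector, independent of $x$), and hence $u_{s,p}$ and $\nabla u_{s,p}$ depend on $x$ only through the scalar combination $u_{s,p}(x) = s + x\cdot p$. Consequently the conductivity evaluated along this solution, namely $a(u_{s,p}(x), \nabla u_{s,p}(x)) = a(s + x\cdot p,\, p)$, is a function of $x$ only via $s + x\cdot p$.

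Next I would compute the divergence explicitly. Writing $\Phi(x) := a(u_{s,p}(x), p)\, p \in \Com^n$, we have $\Phi_i(x) = a(s+x\cdot p, p)\, p_i$, so
\[
\Dv_x \Phi = \sum_i \partial_{x_i}\bigl( a(s+x\cdot p,p)\, p_i \bigr) = \sum_i (\partial_s a)(s+x\cdot p, p)\, p_i\, p_i = (\partial_s a)(s+x\cdot p, p)\, (p\cdot p).
\]
Here the only $x$-dependence in each $\Phi_i$ is through the first argument $s + x\cdot p$, whose $x_i$-derivative is $p_i$; the factor $p_i$ multiplying is constant. Since $p\cdot p = 0$ by hypothesis, the right-hand side vanishes identically in $\Omega$, which is exactly \eqref{Cl_1}. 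I should note that analyticity of $a$ on $\Com\times\Com^n$ (together with $\Omega$ bounded, so that $u_{s,p}(x)$ stays in the domain of definition) guarantees that $\partial_s a$ exists and that all manipulations above are legitimate; in the setting actually used later one only needs $a$ defined and $C^1$ on the relevant tube $\mathcal R \times B_{r_0}(0)$, with $p$ small enough that $u_{s,p}(\Omega) \subset \mathcal R$, as arranged in Lemma \ref{Small}.

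There is essentially no obstacle here: the statement is a one-line calculation, and the claim's phrasing (``the following is clearly satisfied'') already signals this. The only points worth a sentence of care are (i) making explicit that $\nabla u_{s,p}$ is constant so that no second-derivative terms appear, and (ii) observing that the chain rule produces precisely the contraction $(p\cdot p)$, which is the single place the hypothesis $p\cdot p = 0$ enters. This is why, as the text remarks, the family $\{u_{s,p} : p\cdot p = 0\}$ forms a nontrivial bifurcation off the constant solutions $u_{s,0}$: the affine ansatz solves the quasilinear equation for free along the complex analytic cone $\{p\cdot p = 0\}$, with no smallness or further structural input needed for this particular identity.
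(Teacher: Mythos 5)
Your computation is correct and is exactly the (implicit) argument the paper intends: the claim is stated as ``clearly satisfied'' with no written proof, and the same chain-rule calculation producing the factor $p\cdot p$ appears explicitly later in the paper (in the proof of Theorem \ref{Uniqueness_linear_0} for the related function $v_p(x)=p\cdot x$). Your added remarks about $\nabla u_{s,p}$ being constant and about where analyticity of $a$ is needed are accurate and do not change the approach.
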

\begin{rem}
For a conductivity $a(s,p)$ defined only in a portion of the complex space $\Com \times\Com^n$ (see e.g. \eqref{mR}), we need additional restrictions on $(s,p)$, depending on $x\in \Omega$. However, even in this case it is possible to show that Claim \ref{Cl_0} do hold for a subset of possible $(s,p)$.
\end{rem}
\begin{rem}
Note additionally that, for any $x_0\in \R^n$, the modified test function $\tilde u_{s,p}:= s + p\cdot(x-x_0)$ is also solution to the first equation in \eqref{IPnew}.  This sort of  ``degeneracy'' in the choice of $x_0$ is completely absorbed by simply assuming that $x_0 =0 \in \partial\Omega.$ 
\end{rem}

\medskip

The function $u_{s,p}$ is an example of a complex-valued solution to the quasilinear problem \eqref{Cl_1}, revealing the existence of a general family of solutions beyond the ones mentioned by the existence theorems in Section \ref{2}. We would like to find new solutions to \eqref{IPnew} around this explicit solutions. For this reason, we set
\[
u = u_{s,p} +v, \quad v \hbox{ unknown,} 
\]
and we will write \eqref{IPnew} in terms of the new variable $v$. The following result is the main objective of this Section, and it can be seen as an extension of Claim \ref{Cl_0}. Before, recall the definition of $\mathcal R$ in \eqref{mR}.

\begin{thm}\label{Regularity_2_complex}
Let $h\in C^{2,\al}(\overline{\Omega})$ be a fixed, complex-valued function.  Assume that the conductivity $a=a(s,p)$ is analytic in $\mathcal R\times B_{r_0}(0)$, and consider the quasilinear problem in $\Omega \subseteq\R^n$, $n\geq 2$, for $v=v(x)$ complex-valued:
\be\label{Quasi_complex}
\begin{aligned}
\Dv_x (a(u_{s,p} +v, p  + \nabla v)(p + \nabla v)- a(u_{s,p}, p)p) = &~ 0 ~\hbox{ in }~\Omega\\
v\big|_{\partial\Omega}= & ~ h.
\end{aligned}
\ee
Finally, assume that \eqref{Elliptic1_complex} and \eqref{Conds_12} are satisfied.  Then for any small $\|h\|_{C^{2,\al}(\partial\Omega)}$ and small $p\in \Com^n$ such that $p\cdot p=0$, the direct problem \eqref{Quasi_complex} is uniquely solvable for $v=v_{h}$ in $C^{2,\al}(\overline\Omega)$. 
\end{thm}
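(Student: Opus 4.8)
The plan is to obtain Theorem~\ref{Regularity_2_complex} as an application of the Implicit Function Theorem in Banach spaces, using the explicit solution $u_{s,p}$ as the base point around which we perturb, and Theorem~\ref{Linear_complex} as the statement that the linearized operator is invertible. First I would fix $s\in\R$ and a small $p\in\Com^n$ with $p\cdot p=0$, and define the nonlinear map
\[
F(v,h):= \Dv_x\big(a(u_{s,p}+v,\,p+\nabla v)(p+\nabla v)-a(u_{s,p},p)p\big),
\]
viewed as a map from a neighbourhood of $0$ in $X:=\{v\in C^{2,\al}(\overline\Omega;\Com): v|_{\partial\Omega}=h\}$ (or, more cleanly, by subtracting a fixed extension of $h$, from a neighbourhood of $0$ in $C^{2,\al}_0(\overline\Omega;\Com)\times C^{2,\al}(\partial\Omega;\Com)$) into $C^{0,\al}(\overline\Omega;\Com)$. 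By Claim~\ref{Cl_0} (valid on the analyticity region, cf. the remark following it) we have $F(0,0)=0$. Since $a$ is analytic on $\mathcal R\times B_{r_0}(0)$, and $u_{s,p}+v$, $p+\nabla v$ stay in this region for $v$ small in $C^{2,\al}$ (Lemma~\ref{Small}'s band estimate together with $\|\nabla v\|_\infty$ small), the composition $F$ is a $C^1$ (indeed analytic) map between these Banach spaces; I would record the standard fact that $w\mapsto a(u_{s,p}+w_0,p+\nabla w_1)$ is a smooth Nemytskii-type operation on Hölder spaces.

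Next I would compute the partial derivative $D_vF(0,0)$. Differentiating $F$ in the direction $v$ at the base point, the chain rule gives
\[
D_vF(0,0)v=\Dv_x\Big(a(u_{s,p},p)\nabla v+p\big\{(\nabla_p a)(u_{s,p},p)\cdot\nabla v+(\partial_s a)(u_{s,p},p)\,v\big\}\Big),
\]
which is exactly the linear operator appearing on the left-hand side of problem \eqref{a0new}. By Theorem~\ref{Linear_complex}, for $p$ small (so that Lemma~\ref{Small} applies and the matrix $\widetilde A$ in \eqref{Asp} is uniformly elliptic in the complex sense of Remark~\ref{rem_complex}), this linear map, together with the Dirichlet trace, is a bounded bijection from $C^{2,\al}_0(\overline\Omega;\Com)$ onto $C^{0,\al}(\overline\Omega;\Com)$, with bounded inverse and the a priori estimate \eqref{Est_apriori}. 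Hence $D_vF(0,0)$ is an isomorphism. The Implicit Function Theorem then furnishes, for all $h$ with $\|h\|_{C^{2,\al}(\partial\Omega)}$ small, a unique $v=v_h$ near $0$ in $C^{2,\al}(\overline\Omega;\Com)$ solving $F(v_h,h)=0$ with $v_h|_{\partial\Omega}=h$, depending smoothly (analytically) on $h$, which is the asserted existence and uniqueness; uniqueness in the full class $C^{2,\al}$, not merely locally, can be upgraded by invoking Theorem~\ref{Uniqueness} (complex ellipticity version) applied to $u=u_{s,p}+v$, since two solutions with the same boundary data coincide.

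The main obstacle, and the step deserving the most care, is verifying that the hypotheses of Theorem~\ref{Linear_complex}/Lemma~\ref{Small} are genuinely met by the linearization at $u_{s,p}$, \emph{and} that the IFT neighbourhoods can be chosen uniformly enough that the argument is not vacuous. Concretely: (i) one must check that $u_{s,p}=s+x\cdot p$ lands in the band $\mathcal R$ — this is the $\R\times[-Cr_0,Cr_0]\subseteq\mathcal R$ estimate from Lemma~\ref{Small}, using $0\in\partial\Omega$ and boundedness of $\Omega$ — so that $a(u_{s,p},p)$, $\nabla_p a(u_{s,p},p)$, $\partial_s a(u_{s,p},p)$ are all defined and bounded; (ii) the smallness of $p$ needed for ellipticity of $\widetilde A$ must be compatible with $p\cdot p=0$, which is fine since the manifold $\{p\cdot p=0\}$ passes through $0$; (iii) the Nemytskii operator differentiability on $C^{k,\al}$ spaces, while standard, requires $a$ to be at least $C^2$ in $(s,p)$ — here guaranteed by analyticity (H2) — and one should note that the $C^{0,\al}$ target (rather than $C^{1,\al}$) is what matches the divergence-form output and the hypothesis regularity of $g$ in Theorem~\ref{Linear_complex}. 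Everything else — continuity of $F$, the chain-rule computation of $D_vF$, and the conclusion — is routine once these structural points are in place.
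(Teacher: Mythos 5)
Your proposal is correct and follows essentially the same route as the paper: write $v=h+w$ with $w\in C^{2,\al}_0(\overline\Omega)$, verify the nonlinear map is $C^1$ with the linearization at $(0,0)$ given exactly by the operator of \eqref{a0new}, invoke Theorem~\ref{Linear_complex} for its invertibility, and conclude by the Implicit Function Theorem. The only caveat is your final remark on upgrading uniqueness: the paper explicitly notes that the comparison-principle argument behind Theorem~\ref{Uniqueness} fails for complex-valued coefficients, and instead proves a separate energy-method uniqueness statement (Theorem~\ref{Uniqueness_new}) under smallness assumptions, so the IFT itself only yields uniqueness among small solutions.
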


Unlike Section \ref{2}, we will prove Theorem \ref{Regularity_2_complex} following other steps: by the application of the Implicit Function Theorem to show existence and uniqueness.  Although the methods of proof are somehow similar to the ones in Section \ref{2}, it will be clear from the beginning that the addition of a complex-valued conductivity will lead to several problems, and the smallness condition on the gradients will be essential for this approach. 
  
\begin{proof}[Proof of Theorem \ref{Regularity_2_complex}]
We will make use of the Implicit Function Theorem below. Assume $t\in \R$ and $h\in C^{2,\al}(\overline{\Omega})$ given. Let us write
\[
v(x)=  h(x)  +w(x), ~ x\in \overline{\Omega}, \quad \hbox{ $w$ unknown}.
\]
Then \eqref{Quasi_complex} writes in terms of $w$,
\[
\begin{aligned}
\Dv_x \big[a(u_{s,p} + h +w , p  + \nabla h + \nabla w)(p +\nabla h +\nabla w)- a(u_{s,p}, p)p\big] =&~ 0 ~\hbox{ in }~\Omega,\\
w\big|_{\partial\Omega}= & ~ 0.
\end{aligned}
\]
In what follows, we denote by $C^{2,\al}_0(\overline{\Omega})$ the Banach space of complex-valued functions in $ C^{2,\al}(\overline{\Omega})$ which are zero at the boundary (but not necessarily its derivatives). Let us define the map $\mathcal F=\mathcal F_{s,p,h}$ such that
\be\label{F1}
\mathcal F : C^{2,\al}_0(\overline{\Omega}) \times C^{2,\al}_0(\overline{\Omega}) \mapsto C^{0,\al}(\overline\Omega),
\ee
and
\be\label{F2}
\mathcal F[h,w]:= \Dv_x \big(a(u_{s,p} + h +w , p  + \nabla h + \nabla w)(p +\nabla h +\nabla w)- a(u_{s,p}, p)p \big).
\ee
Clearly $\mathcal F$ is well-defined, and $\mathcal F[0,0] \equiv 0$. The fact that $\mathcal F$ is of class $C^1$ is a direct computation. 

\medskip

Now, for $ \tilde w \in  C^{2,\al}_0(\overline{\Omega})$ fixed, we compute $D_w \mathcal F[0,0](\tilde w).$ Since $\mathcal F$ is continuously differentiable, we have
\[
\begin{aligned}
D_w \mathcal F[0,0](\tilde w)  = & ~ \frac{d}{d\sigma}\mathcal F[0,\sigma \tilde w]\Big|_{\sigma =0} \\
 = & ~ \frac{d}{d\sigma} \Big[ \Dv_x( a(u_{s,p} +\sigma \tilde w , p+ \sigma \nabla \tilde w)(p+\sigma \nabla \tilde w) -a(u_{s,p},p)p)\Big]\Big|_{\sigma =0}\\
 =& ~ \Dv\Big( a(u_{s,p},p) \nabla \tilde w + p\big\{  (\nabla_p a(u_{s,p} , p) \cdot \nabla \tilde w )  +  (\partial_s a)(u_{s,p},p)  \tilde w \big\}  \Big).
\end{aligned}
\]
Thanks to Theorem \ref{Linear_complex}, if $|p|<r$ the homogeneous Dirichlet problem 
\[
\begin{aligned}
\Dv_x\big( a(u_{s,p},p) \nabla \tilde w + p\big\{  (\nabla_p a(u_{s,p} , p) \cdot \nabla \tilde w )  +  (\partial_s a)(u_{s,p},p)  \tilde w \big\}  \big) =& ~  g \in C^\al(\Omega), \\
\tilde w \big|_{\partial\Omega} =&~ 0,
\end{aligned}
\]
has a unique solution $\tilde w \in C_0^{2,\al}(\overline{\Omega})$, with uniform bounds. Therefore, applying the Implicit Function Theorem, for each $h\in C^{2,\al}(\overline{\Omega})$ small in norm it is possible to find a unique small solution $w=w[h] \in C^{2,\al}(\overline{\Omega}) $ of $\mathcal F[h, w[h]] \equiv 0$, which solves our problem.
\end{proof}

\subsection{Another proof for uniqueness} Although the following result can be stated for more general conductivities, as in Theorem \ref{Uniqueness} (but under additional assumptions), we will only consider the following simple statement for uniqueness. 

\begin{thm}[Uniqueness]\label{Uniqueness_new}
Let $r_0,R_0>0$ be given by hypothesis \emph{(H2)} (see \eqref{mR}). Assume that $v_1, v_2 \in C^2(\overline{\Omega})$, complex-valued, solve \eqref{Quasi_complex} with same boundary condition,  and satisfying
\be\label{MaMa}
|p|+ \| v_1 \|_{C^2(\overline\Omega)} +\| v_2 \|_{C^2(\overline\Omega)}  < r_0,
\ee
Then $v_1\equiv v_2$ in $\Omega$.  
\end{thm}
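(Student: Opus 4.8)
The plan is to reduce this to a uniqueness statement for a single linear elliptic equation satisfied by the difference $w := v_1 - v_2$, and then invoke the complex-valued version of the maximum principle / uniqueness result from Section \ref{2} (namely Theorem \ref{Uniqueness}, applied to the complex-valued operator as in Remark \ref{rem_complex}). Both $v_i$ solve $\Dv_x\big(a(u_{s,p}+v_i, p+\nabla v_i)(p+\nabla v_i) - a(u_{s,p},p)p\big) = 0$ with the same Dirichlet data, so subtracting the two equations gives $\Dv_x\big(F(x,v_1,\nabla v_1) - F(x,v_2,\nabla v_2)\big) = 0$, where $F(x,s,q) := a(u_{s,p}+s, p+q)(p+q)$ — writing things in terms of the shifted variables. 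Because $a$ is analytic on $\mathcal R \times B_{r_0}(0)$ and the smallness hypothesis \eqref{MaMa} guarantees that $(u_{s,p}+v_i(x), p+\nabla v_i(x))$ stays inside this region for every $x\in\overline\Omega$ and for all convex combinations of the two solutions, we may write the difference via the fundamental theorem of calculus as an integral of the derivative of $F$ along the segment joining $(v_2,\nabla v_2)$ to $(v_1,\nabla v_1)$.

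Concretely, first I would set $v_\theta := v_2 + \theta w$ for $\theta\in[0,1]$, observe $(u_{s,p}+v_\theta, p+\nabla v_\theta)$ lies in $\mathcal R\times B_{r_0}(0)$ thanks to \eqref{MaMa} and the bound on $u_{s,p}$ from the proof of Lemma \ref{Small}, and then write
\[
F(x,v_1,\nabla v_1) - F(x,v_2,\nabla v_2) = \Big(\int_0^1 D_q F(x,v_\theta,\nabla v_\theta)\,d\theta\Big)\nabla w + \Big(\int_0^1 D_s F(x,v_\theta,\nabla v_\theta)\,d\theta\Big) w =: \widehat A(x)\nabla w + \widehat b(x)\, w.
\]
So $w$ solves $\Dv_x(\widehat A(x)\nabla w + \widehat b(x) w) = 0$ in $\Omega$ with $w|_{\partial\Omega} = 0$. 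The coefficients $\widehat A$, $\widehat b$ are continuous (indeed $C^{0,\al}$, since $v_i\in C^2$ and $a$ is analytic), and $\widehat A$ has exactly the structure $a\, I_n + p\,\nabla_p a^{\,T}$ plus terms of size $O(|p| + \|v_i\|_{C^2})$, so — taking $r_0$ smaller if necessary, exactly as in Lemma \ref{Small} — $\widehat A$ is uniformly elliptic in the complex sense \eqref{Elliptic1_complex}: $\re(\widehat A_{ij}\xi_i\overline{\xi_j}) \geq \tfrac{9}{10}\tilde\la|\xi|^2$. Then the homogeneous linear problem $\Dv_x(\widehat A\nabla w + \widehat b\, w) = 0$, $w|_{\partial\Omega}=0$, has only the trivial solution — this follows from Theorem \ref{Uniqueness} (the complex-valued elliptic uniqueness statement), or can be proven directly by a Lax–Milgram / energy argument after noting that $\widehat A,\widehat b$ are bounded and $\widehat A$ coercive, possibly absorbing the $\widehat b\,w$ term using smallness of $|p|$ and $\|v_i\|_{C^2}$ as in \eqref{Upper_b}. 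Hence $w\equiv 0$, i.e. $v_1\equiv v_2$.

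The main obstacle — though a mild one — is making sure the ellipticity of the \emph{averaged} matrix $\widehat A(x)$ genuinely holds: one must check that along the entire segment $v_\theta$ the argument $(u_{s,p}+v_\theta,p+\nabla v_\theta)$ never leaves $\mathcal R\times B_{r_0}(0)$ (this is where \eqref{MaMa} is used in full, controlling $\|v_i\|_{C^2}$ and $|p|$ simultaneously), and that the perturbation $p\,\nabla_p a^{\,T}$ together with the $O(\|v_i\|_{C^2})$ corrections coming from differentiating $F$ in $s$ and $q$ stays below the ellipticity threshold $\tilde\la$; this is precisely the quantitative content already established in Lemma \ref{Small}, reused here with $r_0$ shrunk once more if needed. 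A secondary point is that Theorem \ref{Uniqueness} as quoted is stated for real quasilinear operators, so if one wants a clean citation one should instead invoke its complex analogue via Remark \ref{rem_complex}, or simply give the two-line Lax–Milgram argument for the linear homogeneous Dirichlet problem directly — either route is routine once ellipticity and boundedness of $\widehat A,\widehat b$ are in hand.
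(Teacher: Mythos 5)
Your proposal is correct and follows essentially the same route as the paper: the paper also sets $w=v_1-v_2$, linearizes along the segment $v_t=tv_1+(1-t)v_2$ via the fundamental theorem of calculus to get $\partial_i(a_{ij}\partial_j w+b_iw)=0$ with $w|_{\partial\Omega}=0$, verifies complex ellipticity of the averaged coefficients using (H3) and \eqref{MaMa}, and concludes by exactly the energy argument you describe (testing against $w$, taking real parts, and absorbing the $O(r_0)$ lower-order term via Poincar\'e). Your remark that Theorem \ref{Uniqueness} cannot be cited directly in the complex case and that one should use the energy/Lax--Milgram route instead matches the paper's own footnoted observation that no comparison principle is available for complex-valued solutions.
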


\begin{rem}
The smallness condition on $v_1$ and $v_2$ in \eqref{MaMa} is a sufficient condition for having a positive functional for the difference of two solutions. From the existence part, we see once again that smallness is in some necessary. 
\end{rem}

\begin{rem}
The proof of Theorem \ref{Uniqueness_new} is not based on any type of pointwise comparison principle (compare with section \ref{2} and the monograph \cite[Chapter 10]{GT}), since the former simply does not work for general, complex-valued equations. In that sense, an energy method is suitably more reasonable for taking into account the oscillatory behavior of complex-valued solutions.
\end{rem}

\begin{proof}[Proof of Theorem \ref{Uniqueness_new}]
We follow the ideas of the proof of Theorem 10.7 in \cite{GT}, with some important differences because several inequalities do not hold for complex-valued functions.\footnote{In particular, no comparison principle seems to hold in the complex-value case.} In that sense, the smallness character of the involved perturbation $v$ will be a key ingredient. 

\medskip

Defining $w:=u_1-u_2 = v_1-v_2$, and $v_t := tv_1 + (1-t)v_2$, for $t\in [0,1]$. Finally, for $s\in \R$ and $p\in \Com^n$ fixed, denote
\[
A(x,z,q):= a(u_{s,p} +z, p  + q)(p + q) -a(u_{s,p},p)p, \quad z\in \Com, ~q\in \Com^n.
\]
Clearly $A$ is analytic in the $(z,q)$ variables. Then we have
\be\label{Equation_00}
\begin{aligned}
0 =& ~ \Dv (A(x,u_{s,p} + v_1, p + \nabla v_1) -A(x,u_{s,p} + v_2, p + \nabla v_2)) \\
 = &~ \partial_{i} (a_{ij}(x)\partial_j w + b_i(x) w) \qquad \hbox{(Einstein's summation convention),}
\end{aligned}
\ee
%
where, for $t\in [0,1]$,
\[
\begin{aligned}
a_{ij}(x,s,p) := & ~ \frac12 \int_0^1( \partial_{p_j}A_i +\partial_{p_i}A_j)  (x,  v_t, \nabla v_t)dt \\
b_i(x,s,p) := & ~  \int_0^1 (\partial_{s}A_i) (x,v_t, \nabla v_t)dt.
\end{aligned}
\]
A further simplification reveals that\footnote{$\delta_{ij}$ represents the standard Kronecker's identity matrix.}
\[
\begin{aligned}
a_{ij}(x,s,p) =  &~ \delta_{ij} \int_0^1a (u_{s,p}+ v_t, p+ \nabla v_t)dt  \\
& ~ + \frac12 \int_0^1( p_j \partial_{p_i}a + p_i \partial_{p_j}a) (u_{s,p}+ v_t, p+ \nabla v_t)dt,
\end{aligned}
\]
and
\be\label{BB}
b_i(x,s,p) = p_i \int_0^1 (\partial_{s}a) (u_{s,p} + v_t, p+\nabla v_t)  dt
\ee
Using (H3) (see p. \pageref{H3}) and the smallness hypotheses on the perturbation \eqref{MaMa}, we note that $a_{ij}$ is still elliptic, in the sense of Remark \ref{Elliptic1_complex}:
\[
0<\la |\xi|^2 \leq \re \big( a_{ij}(x,s,p) \xi_i \overline{\xi_j}\big) \leq \Lambda |\xi|^2,
\]
and
\[
|a_{ij}| \leq \Lambda, \qquad |b_i| \leq C.
\]
Therefore, using \eqref{Equation_00} with test function $\varphi =w$,
\[
0= \re\int (a_{ij}(x)\partial_j w + b_i(x) w) \overline{ \partial_{i}w} =\re\int a_{ij}(x)\partial_j w \overline{ \partial_{i}w} + \re \int b_i(x) w\overline{\partial_i w} .
\]
The first integral above is nonnegative (ellipticity), and the second one is small, because of the smallness assumptions: from \eqref{BB} and the fact that $|p|<r_0$,
\[
\sup_{x\in \overline{\Omega}}| \re b(x)| \leq Cr_0.
\]
Therefore, using \eqref{Elliptic1_complex} and the Poincar\'e's inequality (since $\Omega$ is bounded), we have for $r$ small
\[
0 \geq \la\int |\nabla w|^2 - Cr_0^2 \int |w|^2 \gtrsim \int |\nabla w|^2.
\]
Therefore, $w\equiv 0$.
\end{proof}

As in Section \ref{2}, Corollary \ref{Regularity_2}, one of the main consequences of Theorem \ref{Regularity_2_complex} is the existence of a suitable complex-valued DN map around the solution $u_{s,p}$. 

\begin{cor}\label{Regularity_3}
Under the assumptions (and conclusions) of Theorem \ref{Regularity_2_complex}, the following holds. For any $s\in \R$, $\|h\|_{C^{2,\al}(\partial\Omega)}$ small, and $p\in \Com^n$ also small, such that $p\cdot p =0$ is satisfied, the Dirichlet-to-Neumann map 
\be\label{DN_complex}
\begin{aligned}
 C^{2,\al}(\partial\Omega) ~& \longrightarrow &  C^{1,\al}(\partial\Omega;\Com)  \\
 h \hbox{ \quad } ~ & \longmapsto  &  \widetilde\Gamma_{a,s,p}[h] \quad 
\end{aligned}
\ee
where 
\be\label{DN_complex_1}
 \widetilde\Gamma_{a,s,p}[h] := ~ \Big(a(u_{s,p} + v,p +\nabla v) (p+ \nabla v) - a(u_{s,p}, p)p\Big) \cdot \nu \big|_{\partial\Omega}
\ee
and $v=v_{h}$ is the solution of \eqref{Quasi_complex}, is well-defined and bounded.
\end{cor}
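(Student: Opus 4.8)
The plan is to derive Corollary \ref{Regularity_3} directly from the existence/uniqueness statement of Theorem \ref{Regularity_2_complex}, exactly paralleling how Corollary \ref{Regularity_2} follows from Theorem \ref{Regularity_1}. First I would fix $s\in\R$ and $p\in\Com^n$ with $p\cdot p=0$ and $|p|$ small (as required by Lemma \ref{Small} and Theorem \ref{Regularity_2_complex}). For any boundary datum $h\in C^{2,\al}(\partial\Omega)$ of small norm, Theorem \ref{Regularity_2_complex} produces a unique complex-valued solution $v=v_h\in C^{2,\al}(\overline\Omega)$ of \eqref{Quasi_complex}, so the map $h\mapsto v_h$ is well-defined on a neighborhood of $0$. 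Since $v_h\in C^{2,\al}(\overline\Omega)$, its gradient $\nabla v_h$ lies in $C^{1,\al}(\overline\Omega;\Com^n)$, and by hypothesis (H2)--(H3) the conductivity $a$ is analytic (hence $C^1$, indeed $C^{1,\al}$-compatible after composition) on $\mathcal R\times B_{r_0}(0)$; because $|p|+\|v_h\|_{C^{2,\al}}$ is small, the argument $(u_{s,p}+v_h,\,p+\nabla v_h)$ stays inside this domain of analyticity (using that $u_{s,p}=s+x\cdot p$ lies in the narrow band $\R\times[-Cr_0,Cr_0]\subseteq\mathcal R$, exactly as in the proof of Lemma \ref{Small}). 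Therefore the composition $a(u_{s,p}+v_h,\,p+\nabla v_h)\,(p+\nabla v_h)-a(u_{s,p},p)p$ is a well-defined $C^{1,\al}(\overline\Omega;\Com^n)$ vector field, and its normal trace on $\partial\Omega$ makes sense and lies in $C^{1,\al}(\partial\Omega;\Com)$; this is precisely the quantity $\widetilde\Gamma_{a,s,p}[h]$ in \eqref{DN_complex_1}.

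Next I would verify boundedness (continuity) of the map $h\mapsto\widetilde\Gamma_{a,s,p}[h]$. The a-priori estimate built into Theorem \ref{Regularity_2_complex} (obtained through the Implicit Function Theorem, cf.\ the $C^{2,\al}$ bounds there and the linear estimate \eqref{Est_apriori}) gives $\|v_h\|_{C^{2,\al}(\overline\Omega)}\le C\|h\|_{C^{2,\al}(\partial\Omega)}$ for $h$ small. Composing with $a$, which together with $\nabla_p a$ is bounded and Lipschitz on the compact set $\overline{\mathcal R'}\times\overline{B_{r_0}(0)}$ for any horizontal sub-band $\mathcal R'$ (by (H3) and analyticity), one controls $\|a(u_{s,p}+v_h,p+\nabla v_h)(p+\nabla v_h)-a(u_{s,p},p)p\|_{C^{1,\al}(\overline\Omega)}$ by $\|v_h\|_{C^{2,\al}}$, hence by $\|h\|_{C^{2,\al}(\partial\Omega)}$. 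Taking the normal trace is a bounded operation $C^{1,\al}(\overline\Omega)\to C^{1,\al}(\partial\Omega)$ since $\partial\Omega\in C^{2,\al}$ and $\nu\in C^{1,\al}(\partial\Omega;\mathbb S^{n-1})$. This gives $\|\widetilde\Gamma_{a,s,p}[h]\|_{C^{1,\al}(\partial\Omega)}\le C\|h\|_{C^{2,\al}(\partial\Omega)}$, i.e.\ boundedness of the DN map near the origin.

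Finally, well-definedness as a single-valued map requires uniqueness of $v_h$, which is exactly Theorem \ref{Uniqueness_new} (or equivalently the uniqueness half of Theorem \ref{Regularity_2_complex}): any two solutions of \eqref{Quasi_complex} with the same small boundary datum and satisfying the smallness bound \eqref{MaMa} coincide, so $\widetilde\Gamma_{a,s,p}[h]$ does not depend on a choice. I expect the main (minor) obstacle to be purely bookkeeping: making sure the smallness thresholds on $|p|$ and $\|h\|_{C^{2,\al}}$ are chosen so that \emph{all} the arguments appearing — in Lemma \ref{Small}, Theorem \ref{Linear_complex}, Theorem \ref{Regularity_2_complex} and Theorem \ref{Uniqueness_new} — hold simultaneously, and that the composed field genuinely lands in $C^{1,\al}(\overline\Omega)$ (this uses $a\in C^{1,\al}$ in its arguments, which follows from analyticity on $\mathcal R\times B_{r_0}(0)$, and the algebra/composition properties of Hölder spaces). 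No deep new idea is needed: the corollary is a direct packaging of the preceding existence, a-priori estimate, and uniqueness results, together with the elementary fact that the normal trace of a $C^{1,\al}$ field on a $C^{2,\al}$ boundary is $C^{1,\al}$.
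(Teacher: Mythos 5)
Your argument is correct and is precisely the routine verification the paper leaves implicit: Corollary \ref{Regularity_3} is stated without proof as a direct consequence of Theorem \ref{Regularity_2_complex} (existence, uniqueness and the implicit-function-theorem a-priori bound), exactly as Corollary \ref{Regularity_2} follows from Theorem \ref{Regularity_1}. Your packaging of the existence/uniqueness of $v_h$, the H\"older composition with the analytic conductivity, and the boundedness of the normal trace is the intended reasoning.
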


Now that we have a well-defined DN map for the quasilinear problem \eqref{Quasi_complex}, it is time for asking the differentiability properties of $\widetilde\Gamma_{a,s,p}$. 

\bigskip

\section{Linearization of the DN map}\label{5}

\medskip

\subsection{Differentiability of the DN map} In what follows, we prove that the DN map $\widetilde\Gamma_{a,s,p}$ defined in \eqref{DN_complex}-\eqref{DN_complex_1} has a well-defined Gateaux-derivative at the origin. 

\medskip

Given $h\in C^{2,\al}(\overline\Omega)$ fixed, $s \in \R$, $p\in \Com^n$ and $t\in \R$ small enough (such that Theorem \ref{Regularity_2_complex} holds for the boundary data $th$), and consider the DN operator $\widetilde\Gamma_{a,s,p} (th) $. The following result shows that this nonlinear operator has a well-defined G\^ateaux-derivative around $u_{s,p}$.

\begin{lem}\label{5p1}
Given $s,p\in \Com\times \Com^n$ and $t\in \R$ fixed, and $h\in C^{2,\al}(\overline\Omega)$. As a functional from $t\in \R$ into $C^{1,\al}(\partial\Omega)$, we have that
\[
t\mapsto \widetilde\Gamma_{a,s,p} [th]
\]
is G\^ateaux differentiable at $t=0$. Moreover, one has (cf. \ref{Asp})
\be\label{Derivative}
\lim_{t\to 0}\norm{ \frac1t(\widetilde\Gamma_{a,s,p} [th]-\widetilde\Gamma_{a,s,p} [0]) - \big( \widetilde A(\cdot ,s,p) \nabla h + \widetilde b(\cdot ,s,p)h \big) \cdot \nu }_{C^{1,\al}(\partial\Omega)} =0.
\ee
\end{lem}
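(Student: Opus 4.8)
The plan is to reduce the statement to the existence and uniqueness theory already established for the linear complex-valued problem \eqref{a0new} (Theorem \ref{Linear_complex}), combined with the construction of solutions near $u_{s,p}$ via the Implicit Function Theorem (Theorem \ref{Regularity_2_complex}). First I would recall that for small $t$, Theorem \ref{Regularity_2_complex} produces a unique solution $v_t = v_{th} \in C^{2,\al}(\overline\Omega)$ of \eqref{Quasi_complex} with boundary data $th$, depending on the parameter $t$. The key preliminary step is to show that $t\mapsto v_t$ is differentiable (in the $C^{2,\al}$ topology) at $t=0$, with $v_0 = 0$ and derivative $\dot v := \frac{d}{dt}\big|_{t=0} v_t$ solving the linearized problem. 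This follows from the Implicit Function Theorem applied to the map $\mathcal F$ in \eqref{F1}--\eqref{F2}: since $\mathcal F$ is $C^1$, the solution $w[th]$ (hence $v_t = th + w[th]$) depends $C^1$ on $t$, and differentiating the identity $\mathcal F[th, w[th]]\equiv 0$ at $t=0$, using the computation of $D_w\mathcal F[0,0]$ already carried out in the proof of Theorem \ref{Regularity_2_complex}, shows that $\dot v$ satisfies
\[
\Dv_x\big( a(u_{s,p},p)\nabla \dot v + p\{ (\nabla_p a)(u_{s,p},p)\cdot\nabla\dot v + (\partial_s a)(u_{s,p},p)\dot v\}\big) = 0 \quad\hbox{in }\Omega,
\]
with $\dot v|_{\partial\Omega} = h$; equivalently $\dot v$ solves the linear problem \eqref{a0new} with $g\equiv 0$, $h$ as given, whose solution is the one furnished by Theorem \ref{Linear_complex}, and in the notation of \eqref{Asp} the equation reads $\Dv_x(\widetilde A(\cdot,s,p)\nabla\dot v + \widetilde b(\cdot,s,p)\dot v)=0$.

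Next I would differentiate the DN map itself. Writing out \eqref{DN_complex_1} for the boundary data $th$ and the corresponding solution $v_t$, we have
\[
\widetilde\Gamma_{a,s,p}[th] - \widetilde\Gamma_{a,s,p}[0] = \Big( a(u_{s,p}+v_t, p+\nabla v_t)(p+\nabla v_t) - a(u_{s,p},p)p\Big)\cdot\nu\Big|_{\partial\Omega}.
\]
Using the analyticity of $a$ on $\mathcal R\times B_{r_0}(0)$ (hypothesis (H2)) and a first-order Taylor expansion of the vector field $q\mapsto a(u_{s,p}+z,p+q)(p+q)$ around $(z,q)=(0,0)$, the right-hand side equals
\[
\Big( a(u_{s,p},p)\nabla v_t + p\{ (\nabla_p a)(u_{s,p},p)\cdot\nabla v_t + (\partial_s a)(u_{s,p},p) v_t\}\Big)\cdot\nu\Big|_{\partial\Omega} + R_t,
\]
where $R_t$ collects the quadratic-and-higher terms in $(v_t,\nabla v_t)$. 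Dividing by $t$, the main term becomes $\big(\widetilde A(\cdot,s,p)\nabla(v_t/t) + \widetilde b(\cdot,s,p)(v_t/t)\big)\cdot\nu|_{\partial\Omega}$, which converges in $C^{1,\al}(\partial\Omega)$ to $\big(\widetilde A(\cdot,s,p)\nabla h + \widetilde b(\cdot,s,p)h\big)\cdot\nu|_{\partial\Omega}$ by the $C^{2,\al}$-differentiability $v_t/t \to \dot v = $ (solution of \eqref{a0new}) established above and the a priori bound \eqref{Est_apriori}, together with the fact that the trace map $C^{2,\al}(\overline\Omega)\to C^{1,\al}(\partial\Omega)$ and multiplication by the fixed $C^{1,\al}$ coefficients $\widetilde A,\widetilde b$ and by $\nu$ are continuous. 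It remains to show $\|R_t/t\|_{C^{1,\al}(\partial\Omega)}\to 0$: since $a$ is analytic (hence $C^2$) on the relevant compact complex region, $R_t$ is controlled pointwise by $C(\|v_t\|_{C^1}^2 + \ldots)$ with analogous bounds for the needed derivatives on $\partial\Omega$, so $\|R_t\|_{C^{1,\al}(\partial\Omega)}\lesssim \|v_t\|_{C^{2,\al}(\overline\Omega)}^2 \lesssim t^2\|h\|_{C^{2,\al}}^2$ by \eqref{Est_apriori} applied to the construction, whence $R_t/t\to 0$.

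The step I expect to be the main obstacle is the uniform control of the remainder term $R_t$ in the \emph{$C^{1,\al}(\partial\Omega)$} norm rather than merely pointwise: one must differentiate once (tangentially) the composition $x\mapsto a(u_{s,p}(x)+v_t(x), p+\nabla v_t(x))(p+\nabla v_t(x))$, which brings in $\nabla^2 v_t$ and first derivatives of $a$, and then estimate the resulting $C^{0,\al}$ seminorm, keeping everything quadratic in the smallness parameter. This requires care with the chain rule for the analytic function $a$ of complex arguments, using that $u_{s,p}+v_t$ stays in $\mathcal R$ and $p+\nabla v_t$ stays in $B_{r_0}(0)$ (guaranteed by the smallness of $p$ and of $\|h\|_{C^{2,\al}}$, cf. Lemma \ref{Small} and Theorem \ref{Regularity_2_complex}), so that all derivatives of $a$ appearing are bounded on a fixed compact set. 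A secondary technical point is justifying that $t\mapsto v_t$ is genuinely $C^1$ into $C^{2,\al}(\overline\Omega)$ and not merely into a weaker space; this is exactly the output of the Implicit Function Theorem as applied in Theorem \ref{Regularity_2_complex}, since $\mathcal F$ maps into $C^{0,\al}(\overline\Omega)$ and $D_w\mathcal F[0,0]$ is invertible onto that space with bounded inverse by Theorem \ref{Linear_complex}, so I would simply invoke that.
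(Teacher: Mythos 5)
Your proposal is correct in substance but takes a genuinely different route from the paper. The paper's proof is essentially a two-line algebraic computation: it writes $\widetilde\Gamma_{a,s,p}[th]-\widetilde\Gamma_{a,s,p}[0]$ via \eqref{DN_complex_1}, replaces the solution $v=v_{th}$ of \eqref{Quasi_complex} by $th$ itself on $\partial\Omega$ (including $\nabla v$ by $t\nabla h$), and Taylor-expands the resulting explicit expression in powers of $t$, reading off the linear term $\big(\widetilde A\nabla h+\widetilde b\,h\big)\cdot\nu$ with an $O(t^2)$ remainder. You instead differentiate the solution map: you reuse $D_w\mathcal F[0,0]$ from the proof of Theorem \ref{Regularity_2_complex} to get $v_t/t\to\dot v$ in $C^{2,\al}(\overline\Omega)$, identify $\dot v$ as the solution of the linearized problem, and then expand the boundary flux with a quantitative remainder bound via \eqref{Est_apriori}. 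Your route is the standard linearization argument (Isakov, Sun--Uhlmann) and is more careful precisely where the paper is cavalier: the substitution $\nabla v_{th}=t\nabla h$ on $\partial\Omega$ is not justified a priori, since the Dirichlet data determine only the tangential part of $\nabla v_{th}$ there, not its normal component. The price is that your limit is naturally $\big(\widetilde A\nabla\dot v+\widetilde b\,\dot v\big)\cdot\nu$ with $\dot v$ the solution of \eqref{a0new} with $g\equiv 0$, which matches the right-hand side of \eqref{Derivative} only under the paper's convention (made explicit in \eqref{Gamma_ele}) of writing the linearized DN map in terms of the extension $h$; your last step silently replaces $\nabla\dot v$ by $\nabla h$ on $\partial\Omega$, which is the same identification the paper itself makes. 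If you adopt your route, state that convention explicitly (i.e.\ read \eqref{Derivative} with $\nabla h$ meaning $\nabla v_h$ for $v_h$ solving \eqref{a0_new}--\eqref{a1_new}), or else justify why the normal component is immaterial in the subsequent use of the lemma; apart from that, your remainder analysis is sound and in fact strengthens the paper's argument.
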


\begin{rem}
Identity \eqref{Derivative} can be recast as a directional derivative:
\[
D\widetilde\Gamma_{a,s,p} [0] (h)= \big( \widetilde A(\cdot ,s,p) \nabla h + \widetilde b(\cdot ,s,p)h \big) \cdot \nu.
\]
for any $h\in C^{2,\al}(\overline\Omega)$ and $\widetilde A$, $\widetilde b$ defined in \eqref{Asp}.
\end{rem}
\begin{proof}[Proof of Lemma \ref{5p1}]
From \eqref{DN_complex_1} and \eqref{Quasi_complex}, we have
\[
\begin{aligned}
\widetilde\Gamma_{a,s,p} [t h] -\widetilde\Gamma_{a,s,p} [0] & = \big(a(u_{s,p} + v, p +\nabla v)(p+\nabla v)   - a(u_{s,p},p) p \big) \cdot \nu\big|_{\partial\Omega} \\
& =\big(a(u_{s,p} + th, p +t\nabla h)(p+t\nabla h)   - a(u_{s,p},p) p \big) \cdot \nu.
\end{aligned}
\]
The term above can be expanded as follows:
\[
\begin{aligned}
& a(u_{s,p}+ th, p +t\nabla h)(p+t\nabla h)   - a(u_{s,p},p) p  =\\
& \quad = \big( a(u_{s,p} ,p) + t\partial_s a(u_{s,p},p) h + t\nabla_p a(u_{s,p},p) \cdot \nabla h \big)(p+t\nabla h)  - a(u_{s,p},p) p \\
&\quad \quad  + t^2 D_{s,p}^2 a(\tilde s, \tilde p) [h,\nabla h]^2 (p+t\nabla h)\\
& \quad =  t \big( a(u_{s,p} ,p) \nabla h + p \partial_s a(u_{s,p},p) h + p \nabla_p a(u_{s,p},p) \cdot \nabla h \big) \\
& \quad \quad + t^2 \big( \partial_s a(u_{s,p},p) h + \nabla_p a(u_{s,p},p) \cdot \nabla h \big)\nabla h \\
&\quad \quad  + t^2 D_{s,p}^2 a(\tilde s, \tilde p) [h,\nabla h]^2 (p+t\nabla h) .
\end{aligned}
\]
Since $h\in C^{2,\al}(\overline\Omega)$, and $a$ is analytic in the considered region, we have then
\[
\begin{aligned}
\lim_{C^{1,\al}, t\to 0} \frac1t [\widetilde\Gamma_{a,s,p} [ t h] -\widetilde\Gamma_{a,s,p} [0 ] ] & =  a(u_{s,p} ,p) \nabla h \cdot \nu \\
 & \quad +  \{   \nabla_p a(u_{s,p},p) \cdot \nabla h +\partial_s a(u_{s,p}p) h  \}  p \cdot \nu.
\end{aligned}
\]
Therefore, $t\mapsto \widetilde\Gamma_{a,s,p} [t h]$ is differentiable at $t=0$ and its derivative is given by \eqref{Derivative}.
\end{proof}

\subsection{The linearized Calder\'on's problem} In what follows, we consider the following linear direct problem: given $h\in C^{2,\al}(\overline\Omega)$, find $v \in C^{2,\al}(\overline{\Omega})$ such that
\be\label{a0_new}
\Dv_x  \Big[ a(u_{s,p},p) \nabla v    + p\big\{ \nabla_p a(u_{s,p},p)  \cdot  \nabla v  +  \partial_s a(u_{s,p} ,p)  v  \big\}  \Big]  =0 \quad \hbox{ in }\Omega,
\ee
under the Dirichlet boundary condition
\be\label{a1_new}
v\big|_{\partial\Omega}= h.
\ee
Recall that in this problem, the coeffcients
\[
 a(u_{s,p},p), \quad p\, \nabla_p a(u_{s,p},p), \quad \hbox{ and } \quad p \,  \partial_s a(u_{s,p} ,p),
\]
are functions of $(x,s,p)$, but independent of $v$. Thanks to Theorem \ref{Linear_complex}, this problem has a unique solution $v=v_h \in C^{2,\al}(\overline{\Omega})$, provided $p$ is chosen small enough, a condition that will be assumed from now on. 

\medskip

For problem \eqref{a0_new}-\eqref{a1_new}, we denote by $\widetilde\Gamma_{\ell,a,s,p}[h]$ the associated DN map:
\be\label{Gamma_ele} 
\begin{aligned}
\widetilde\Gamma_{\ell,a,s,p}[h]  := & ~ \Big[ a(u_{s,p},p)  \nabla v  + p\big\{ \nabla_p a(u_{s,p},p)  \cdot  \nabla v  +  \partial_s a(u_{s,p} ,p)  v  \big\} \Big] \cdot \nu\Big|_{\partial\Omega}  \\
=& ~  \big( \widetilde A(x,s,p) \nabla h + \widetilde b(x,s,p)h \big) \cdot \nu. \qquad \hbox{ (see \eqref{Asp}).}
\end{aligned}
\ee 
Recall the nonlinear DN map $\widetilde\Gamma_a[h]$ in \eqref{DN_complex}-\eqref{DN_complex_1}. Our next result is the following

\begin{cor}\label{Coincide_0}
Fix a complex-valued coefficient $a=a(s,p)$ satisfying \emph{(H1)-(H3)} in p. \pageref{H1}, and $h \in C^{2,\al}(\overline\Omega)$. Then, for each function $(s,p) \in \R\times \Com^n$ such that $p\cdot p=0$ and $p$ is small enough, the associated DN map $\widetilde\Gamma_a$ satisfies
\[
\frac{d}{dt} \widetilde\Gamma_{a,s,p} [ t h]\Big|_{t=0} =\widetilde\Gamma_{\ell,a,s,p}[h]. 
\]
\end{cor}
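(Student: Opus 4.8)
The plan is to combine the differentiability result from Lemma~\ref{5p1} with the definition of the linearized DN map in \eqref{Gamma_ele}. The key observation is that both objects are expressed in terms of the same data $\widetilde A$ and $\widetilde b$ from \eqref{Asp}. Indeed, Lemma~\ref{5p1} (and the Remark following it) shows that the Gateaux derivative of the nonlinear DN map at the origin is
\[
D\widetilde\Gamma_{a,s,p}[0](h) = \big( \widetilde A(\cdot,s,p)\nabla h + \widetilde b(\cdot,s,p) h \big)\cdot \nu,
\]
while the second line of \eqref{Gamma_ele} identifies the linearized DN map $\widetilde\Gamma_{\ell,a,s,p}[h]$ with exactly the same expression. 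So at the level of formulas the statement is almost immediate; what requires care is checking that all the hypotheses under which those two identities were established are simultaneously in force here.

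First I would fix $a = a(s,p)$ satisfying (H1)--(H3), fix $h \in C^{2,\al}(\overline\Omega)$, and fix $(s,p) \in \R\times\Com^n$ with $p\cdot p = 0$ and $|p|$ small. I would note that under (H1)--(H3), Lemma~\ref{Small} applies, so $\widetilde A$ is uniformly elliptic in the sense of Remark~\ref{rem_complex} and $\widetilde b$ is bounded, provided $r_0$ is taken small enough; hence by Theorem~\ref{Linear_complex} the linear problem \eqref{a0_new}--\eqref{a1_new} is uniquely solvable in $C^{2,\al}(\overline\Omega)$ and $\widetilde\Gamma_{\ell,a,s,p}[h]$ is well-defined. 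Similarly, for $t$ small the boundary data $th$ is small in $C^{2,\al}(\partial\Omega)$, so Theorem~\ref{Regularity_2_complex} and Corollary~\ref{Regularity_3} guarantee that $\widetilde\Gamma_{a,s,p}[th]$ is well-defined for all sufficiently small $t$, and Lemma~\ref{5p1} then gives that $t\mapsto \widetilde\Gamma_{a,s,p}[th]$ is Gateaux differentiable at $t=0$ with derivative \eqref{Derivative}.

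Next I would simply compare: the right-hand side of \eqref{Derivative} is $\big(\widetilde A(\cdot,s,p)\nabla h + \widetilde b(\cdot,s,p)h\big)\cdot\nu$, which by the second equality in \eqref{Gamma_ele} equals $\widetilde\Gamma_{\ell,a,s,p}[h]$. Hence
\[
\frac{d}{dt}\widetilde\Gamma_{a,s,p}[th]\Big|_{t=0} = \widetilde\Gamma_{\ell,a,s,p}[h],
\]
which is the claim. The only genuinely substantive point to verify is the consistency of the smallness thresholds: the $r_0$ needed for Lemma~\ref{Small}/Theorem~\ref{Linear_complex} and the $p$- and $t$-smallness needed for Theorem~\ref{Regularity_2_complex}. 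I would therefore say explicitly that we fix $r_0$ small enough for all of Lemma~\ref{Small}, Theorem~\ref{Linear_complex}, Theorem~\ref{Regularity_2_complex} and Corollary~\ref{Regularity_3} to hold simultaneously, and restrict $|p|$ accordingly; with that caveat the result follows at once from Lemma~\ref{5p1} and \eqref{Gamma_ele}. The main (indeed only) obstacle is bookkeeping of these thresholds rather than any new analytic difficulty — everything quantitative has already been done in Sections~\ref{3}--\ref{5}.
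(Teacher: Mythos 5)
Your proposal is correct and follows essentially the same route as the paper: the paper's proof of Corollary~\ref{Coincide_0} is simply ``direct from \eqref{Derivative} and \eqref{Gamma_ele},'' i.e.\ matching the Gateaux derivative formula of Lemma~\ref{5p1} with the expression defining $\widetilde\Gamma_{\ell,a,s,p}$. Your extra bookkeeping on the smallness thresholds is a reasonable (if implicit in the paper) addition, not a departure.
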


\begin{proof}
Direct from \eqref{Derivative} and \eqref{Gamma_ele}.
\end{proof}

A second result translates the information from $\widetilde\Gamma_{a,s,p}$ into $\widetilde\Gamma_{\ell,a,s,p}$.

\begin{cor}\label{5p3}
Assume that $\widetilde\Gamma_{a_1,s,p} \equiv \widetilde\Gamma_{a_2,s,p}$ for given coefficients $a_1,a_2$ satisfying \emph{(H1)-(H3)} in p. \pageref{H1}. Fix $(s,p)\in \R\times\Com^n$, with $p\cdot p =0$ and $p$ small.  Finally let $\widetilde\Gamma_{\ell,a_1,s,p}$ and $\widetilde\Gamma_{\ell,a_2,s,p}$ be the corresponding DN maps obtained form the previous result for $a_1$ and $a_2$, respectively. Then, for each fixed $(s,p)$ one has 
\[
\widetilde\Gamma_{\ell,a_1,s,p} \equiv \widetilde\Gamma_{\ell,a_2,s,p}.
\]
\end{cor}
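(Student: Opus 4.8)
The plan is to exploit the fact, already established in Corollary \ref{Coincide_0}, that the linearized map $\widetilde\Gamma_{\ell,a,s,p}$ is precisely the G\^ateaux derivative at the origin of the nonlinear map $\widetilde\Gamma_{a,s,p}$ evaluated along a fixed direction $h$. Concretely, for any fixed $h\in C^{2,\al}(\overline\Omega)$ and any admissible small parameter $t\in\R$, the hypothesis $\widetilde\Gamma_{a_1,s,p}\equiv\widetilde\Gamma_{a_2,s,p}$ gives the identity $\widetilde\Gamma_{a_1,s,p}[th]=\widetilde\Gamma_{a_2,s,p}[th]$ as elements of $C^{1,\al}(\partial\Omega)$, for every such $t$.

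First I would fix $(s,p)$ with $p\cdot p=0$ and $|p|$ small enough that Theorem \ref{Regularity_2_complex} applies, and then fix $h\in C^{2,\al}(\overline\Omega)$; by that theorem the DN maps $\widetilde\Gamma_{a_i,s,p}$, $i=1,2$, are both defined on the boundary data $th$ for $|t|$ small (the admissible range of $t$ is allowed to depend on $h$, which is harmless since $h$ is held fixed throughout). I would then differentiate the identity $\widetilde\Gamma_{a_1,s,p}[th]=\widetilde\Gamma_{a_2,s,p}[th]$ with respect to $t$ at $t=0$: by Lemma \ref{5p1} each side is G\^ateaux differentiable at $t=0$ as a curve in $C^{1,\al}(\partial\Omega)$, and since the two curves coincide their derivatives at $t=0$ must coincide as well.

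Finally, invoking Corollary \ref{Coincide_0}, the $t$-derivative at $0$ of the left-hand side equals $\widetilde\Gamma_{\ell,a_1,s,p}[h]$ and that of the right-hand side equals $\widetilde\Gamma_{\ell,a_2,s,p}[h]$; hence $\widetilde\Gamma_{\ell,a_1,s,p}[h]=\widetilde\Gamma_{\ell,a_2,s,p}[h]$. Since $h\in C^{2,\al}(\overline\Omega)$ was arbitrary, this yields $\widetilde\Gamma_{\ell,a_1,s,p}\equiv\widetilde\Gamma_{\ell,a_2,s,p}$, as claimed. The argument is essentially immediate once Lemma \ref{5p1} and Corollary \ref{Coincide_0} are available; there is no genuine obstacle here, the only mild point being the $h$-dependence of the admissible range of $t$, which is circumvented by arguing one fixed $h$ at a time.
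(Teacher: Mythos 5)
Your proposal is correct and follows essentially the same route as the paper: fix $h$, use the hypothesis to equate $\widetilde\Gamma_{a_1,s,p}[th]$ and $\widetilde\Gamma_{a_2,s,p}[th]$ for small $t$, and differentiate at $t=0$ via Lemma \ref{5p1} and Corollary \ref{Coincide_0}. Your remark on handling the $h$-dependent range of $t$ by fixing $h$ first is a sensible (if minor) clarification of the same argument.
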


\begin{proof}
We follow the proof in \cite{SunU}. Fix $h\in C^{2,\al}(\overline\Omega)$, and pick $t\in \R$ small. We know by hypothesis that,  for any suitable $(s,p)$,
\[
\widetilde\Gamma_{a_1,s,p}[th] = \widetilde\Gamma_{a_2,s,p}[ th].
\]
In particular, thanks to Corollary \ref{Coincide_0}, both Gateaux-derivatives coincide:
\[
\frac{d}{dt}\widetilde\Gamma_{a_1,s,p}[ th]  \Big|_{t=0}= \frac{d}{dt}\widetilde \Gamma_{a_2,s,p}[ th]\Big|_{t=0},
\]
namely
\[
\widetilde\Gamma_{\ell,a_1,s,p}[h] =\widetilde\Gamma_{\ell,a_2,s,p}[h],
\]
as desired.
\end{proof}

\bigskip

\section{Uniqueness of a linear Calder\'on's problem}\label{6}

\medskip

\subsection{Setting} Let us assume, as already required in this paper, that $s\in \R$ is fixed and $p\in \Com^n$ is a small, complex-valued vector satisfying the compatibility condition $p\cdot p =0$. The purpose of this Section is the study of the Calder\'on's inverse problem for the complex-valued, matrix-valued, linear equation \eqref{a0_new}-\eqref{a1_new}-\eqref{Gamma_ele}. We also assume $h\in C^{2,\al}(\overline{\Omega})$ in \eqref{a1_new}.

\medskip

Consider the associated DN map $\widetilde\Gamma_{\ell,a,s,p}[h] $, $h$ as above, see \eqref{Gamma_ele}. Now the question is the following: can the knowledge of $\widetilde\Gamma_{\ell,a,s,p}$ for a sufficiently large class of boundary data $h$ determine the \emph{linearized}, complex-valued conductivity $a=a(z,q)$?

\medskip

Let us start by recalling the following well-known result, valid for the scalar, real-valued conductivity case, but whose extension to the complex-valued case is immediate from the proposed proof.

\begin{thm}[Sylvester and Uhlmann, \cite{SU}]\label{S_U}
Assume that $n\geq 3$ and $a=a(x)$ is an unknown $C^2(\overline{\Omega})$, real-valued conductivity. Consider the associated  Calder\'on's inverse problem for $a(x):$ 
\be\label{Calderon_0}
\begin{aligned}
\Dv (a(x) \nabla v) & = 0 \quad \hbox{in } \Omega, \\
v& = h \quad \hbox{on } \partial\Omega,
\end{aligned} 
\ee
and let us assume that two conductivities $a_1,a_2$ above produce the same DN map:
\[
H^{1/2}(\partial\Omega) \ni h \xmapsto{ \quad }  a_j(x) \nabla u \cdot \nu \big|_{\partial\Omega} \in H^{-1/2}(\partial\Omega).
\]
Then $a_1 \equiv a_2.$
\end{thm}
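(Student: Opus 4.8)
The plan is to reduce the divergence-form conductivity equation to a Schr\"odinger equation and then invoke the complex geometrical optics (CGO) construction of Sylvester and Uhlmann. First I would perform the standard Liouville-type substitution: given a solution $v$ of $\Dv(a\nabla v)=0$ in $\Omega$, set $w := a^{1/2}v$. A direct computation shows that $w$ solves the Schr\"odinger equation $(-\Delta + q)w = 0$ in $\Omega$, with potential $q := \dfrac{\Delta(a^{1/2})}{a^{1/2}}$; this requires $a\in C^2(\overline\Omega)$, which is exactly the hypothesis. Moreover, since on $\partial\Omega$ the boundary values of $a$ are already determined by the DN map $\Lambda_a$ (the boundary determination result: $a$ and $\partial_\nu a$ on $\partial\Omega$ are computed from $\Lambda_a$), equality of the conductivity DN maps $\Lambda_{a_1}=\Lambda_{a_2}$ forces $a_1=a_2$ and $\partial_\nu a_1 = \partial_\nu a_2$ on $\partial\Omega$, and then a short computation translates $\Lambda_{a_1}=\Lambda_{a_2}$ into the equality of the DN maps $\Lambda_{q_1}=\Lambda_{q_2}$ for the two Schr\"odinger operators.

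Next I would prove the uniqueness statement for the Schr\"odinger equation: if $\Lambda_{q_1}=\Lambda_{q_2}$ then $q_1=q_2$. The key integral identity comes from Green's formula: for $v_1$ solving $(-\Delta+q_1)v_1=0$ and $v_2$ solving $(-\Delta+q_2)v_2=0$,
\[
\int_\Omega (q_1-q_2)\, v_1 v_2 \, dx = 0,
\]
the boundary terms vanishing precisely because the DN maps agree. The heart of the argument is then to choose $v_1,v_2$ to be CGO solutions. One constructs, for $\zeta\in\Com^n$ with $\zeta\cdot\zeta=0$ and $|\zeta|$ large, solutions of the form $v = e^{x\cdot\zeta}(1+\psi_\zeta(x))$ where $\|\psi_\zeta\|_{L^2}\to 0$ as $|\zeta|\to\infty$; this is done by solving $(-\Delta+q)v=0$ via the Faddeev Green's function and the Carleman-type estimate $\|u\|\lesssim |\zeta|^{-1}\|(\Delta + 2\zeta\cdot\nabla)u\|$ on a suitable weighted space. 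Given any fixed $\xi\in\R^n$, pick $\zeta_1,\zeta_2$ with $\zeta_1+\zeta_2 = i\xi$, $\zeta_j\cdot\zeta_j=0$, $|\zeta_j|\to\infty$; plugging the corresponding CGO solutions into the integral identity and letting $|\zeta_j|\to\infty$ yields $\widehat{(q_1-q_2)}(\xi)=0$ for all $\xi$, hence $q_1=q_2$.

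Finally I would close the loop: from $q_1=q_2$ and the fact that $a_1=a_2$, $\partial_\nu a_1=\partial_\nu a_2$ on $\partial\Omega$, the functions $m_j := a_j^{1/2}$ both solve the same linear elliptic equation $(-\Delta+q)m=0$ with the same Cauchy data on $\partial\Omega$; unique continuation (or simply uniqueness for the Dirichlet problem, once one notes $0$ is not a Dirichlet eigenvalue here since $a_j>0$) gives $m_1\equiv m_2$, i.e. $a_1\equiv a_2$. The main obstacle — and the only genuinely hard step — is the CGO construction together with the decay $\|\psi_\zeta\|\to 0$; this rests on the Sylvester--Uhlmann Carleman estimate for the conjugated Laplacian $e^{-x\cdot\zeta}\Delta e^{x\cdot\zeta}$ and is dimension-sensitive (it needs $n\ge 3$ so that one has enough room to choose $\zeta_1,\zeta_2$ with the required orthogonality and prescribed sum). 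Since the statement explicitly cites \cite{SU} and only asserts that the complex-valued case follows ``immediately from the proposed proof,'' in practice I would simply invoke the Sylvester--Uhlmann theorem, remark that all the estimates above are insensitive to whether $a$ (equivalently $q$) is real or complex-valued as long as $\re a$ stays bounded below away from zero so that $a^{1/2}$ is well-defined and the Dirichlet problem is solvable, and note that $0$ being in the resolvent set for the relevant operator can be arranged by the same smallness/ellipticity considerations used elsewhere in the paper.
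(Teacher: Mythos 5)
Your outline is the standard Sylvester--Uhlmann argument (Liouville substitution to a Schr\"odinger equation, boundary determination, the Green's-identity integral formula, CGO solutions with $\zeta\cdot\zeta=0$ and $\zeta_1+\zeta_2=i\xi$, then unique continuation to recover $a$ from $q$), which is exactly what the paper intends: it gives no proof of its own, simply citing \cite{SU} and remarking that the proof rests on CGO solutions. So your proposal is correct and takes essentially the same (indeed, the only intended) route.
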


\begin{rem}
The proof of this result is mainly based on the use of complex geometric optics solutions (CGO). Later, we will see that our uniqueness proof will not use CGO solutions, since a very helpful, zero-order linear term will appear in the linearization of the DN map. Such a term is not present in Theorem \ref{S_U}, which makes our proof not suitable for \eqref{Calderon_0}.
\end{rem}

The purpose of this section is to extend Theorem \ref{S_U} to the complex-valued case given in \eqref{a0_new}-\eqref{a1_new}. Since this new problem is no longer a scalar one, we will need some different techniques. Before proving this result, we need an auxiliary lemma. 

\begin{lem}\label{Aux_lemma}
Assume that $0\in \partial\Omega$, $\nu(0)$ being the outer unit normal to $0\in \partial\Omega$, and let $\mathcal A(r_0)$ and $\mathcal C(r_0)$ be the sets of the form
\be\label{Ar_0}
\mathcal A(r_0) := \big\{ p\in \Com^n ~ : ~  |p|<r_0, ~p\cdot p=0 \big\},
\ee
and
\be\label{Br_0}
\mathcal C(r_0) := \big\{ p\in \Com^n ~ : ~  |p|<r_0, \, ~p\cdot p=0 ~ \hbox{ and } ~ p\cdot \nu(0) \neq 0\big\},
\ee
where $\nu(0)$ is the outer unit normal to $\Omega$ at the point $x=0$. Then, 
\begin{enumerate}
\item\label{(1)} For any $n\geq 2$, $\mathcal A(r_0)$ is not open, but $\mathcal A(r_0)\backslash \{0\}$ is an analytic manifold.
\smallskip
\item\label{(2)} For any $n\geq 2$, the vector $0\in \Com^n$ satisfies $0\not\in \mathcal C(r_0)$.
\smallskip
\item\label{(3)} For any $n\geq 2$, $ \mathcal C(r_0) \subset B_{r_0}(0) \backslash \{0\}$ in $\Com^n$.
\smallskip
\item\label{(4)} For any $n\geq 2$, $\mathcal C(r_0)$ is a complex analytic manifold.
\smallskip
\item\label{(5)} If $p \in \mathcal C(r_0)$ and $\la\in \Com \backslash \{0\}$ is such that $|\la|<1$, then $\la p \in \mathcal C(r_0) $. 
\smallskip
\item\label{(6)} If $n= 2$, $\mathcal C(r_0) $ is of the form
\be\label{Caso_2}
p = p_0 + i p_0^\perp, \quad p_0 \in \R^2 \backslash \{0\}, \quad p_0^\perp \cdot p_0=0, \quad |p_0|<\frac{r_0}{\sqrt{2}}. 
\ee
 \item\label{(7)} If $n\geq 3$, the plane passing by the origin and determined by the directions $p_r,p_i \in \R^n$, with $p=p_r +i p_i \in \mathcal C(r_0) $, is not the plane orthogonal to $\nu(0)$. 
\end{enumerate}
\end{lem}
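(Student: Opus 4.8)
The plan is to dispatch the seven assertions essentially in the order listed, since each is an elementary consequence of the algebraic structure of the quadric cone $\{p\cdot p=0\}$ together with the open linear condition $p\cdot\nu(0)\neq 0$. First I would introduce the holomorphic function $q(p):=p\cdot p=\sum_{j=1}^{n}p_j^{2}$ on $\Com^{n}$, so that $\mathcal A(r_0)=B_{r_0}(0)\cap q^{-1}(0)$ and $\mathcal C(r_0)=\mathcal A(r_0)\cap\{p:\ p\cdot\nu(0)\neq 0\}$, and observe that the complex gradient $\nabla q(p)=2p$ vanishes only at $p=0$. From this, \eqref{(1)} follows: away from the origin $q^{-1}(0)$ is regular, so the holomorphic implicit function theorem gives that $\mathcal A(r_0)\setminus\{0\}$ is a complex submanifold of dimension $n-1$ (nonempty for every $n\geq 2$, as $t(e_1+ie_2)$ lies on it for small $t\neq 0$), while $\mathcal A(r_0)$ fails to be open because $0\in\mathcal A(r_0)$ yet every ball about $0$ contains points such as $te_1$ with $q(te_1)=t^{2}\neq 0$. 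Assertion \eqref{(2)} is just $0\cdot\nu(0)=0$, and it makes $\mathcal C(r_0)=\big(\mathcal A(r_0)\setminus\{0\}\big)\cap\{p:\ p\cdot\nu(0)\neq 0\}$ an open subset of the manifold $\mathcal A(r_0)\setminus\{0\}$, hence analytic — this is \eqref{(4)}; and \eqref{(3)} is immediate from $\mathcal C(r_0)\subset\mathcal A(r_0)\subset B_{r_0}(0)$ together with \eqref{(2)}.

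For \eqref{(5)} I would simply check that if $p\in\mathcal C(r_0)$ and $0<|\la|<1$, then $|\la p|<r_0$, $(\la p)\cdot(\la p)=\la^{2}(p\cdot p)=0$, and $(\la p)\cdot\nu(0)=\la\,(p\cdot\nu(0))\neq 0$. For \eqref{(6)} and \eqref{(7)} the key move is to split $p=p_r+ip_i$ with $p_r,p_i\in\R^{n}$ and use that $\nu(0)$ is real, so that
\[
p\cdot p=|p_r|^{2}-|p_i|^{2}+2i\,(p_r\cdot p_i),\qquad p\cdot\nu(0)=p_r\cdot\nu(0)+i\,(p_i\cdot\nu(0)).
\]
Thus $p\cdot p=0$ is equivalent to $|p_r|=|p_i|$ and $p_r\perp p_i$. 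When $n=2$ this forces $p_i$ to be a vector orthogonal to $p_r$ of equal length, which is precisely the form \eqref{Caso_2} with $p_0=p_r$; here $p_0\neq 0$ by \eqref{(2)}, the identity $|p|^{2}=2|p_0|^{2}<r_0^{2}$ gives $|p_0|<r_0/\sqrt{2}$, and $p\cdot\nu(0)\neq 0$ is automatic since a nonzero real vector $\nu(0)\in\R^{2}$ cannot be orthogonal to both $p_r$ and $p_0^{\perp}$, which form a basis of $\R^{2}$. When $n\geq 3$, since $p\neq 0$ and $|p_r|=|p_i|$, both $p_r$ and $p_i$ are nonzero and orthogonal, hence linearly independent, so $\mathrm{span}\{p_r,p_i\}$ is a genuine $2$-plane through $0$; and $p\cdot\nu(0)\neq 0$ forces $p_r\cdot\nu(0)\neq 0$ or $p_i\cdot\nu(0)\neq 0$, so $\nu(0)\notin\mathrm{span}\{p_r,p_i\}^{\perp}$, i.e. this plane is not contained in the hyperplane $\nu(0)^{\perp}$, which is \eqref{(7)}.

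I expect no serious obstacle; the only place deserving a moment's care is verifying the manifold structure and — especially — the nonemptiness of $\mathcal C(r_0)$ that is implicitly asserted in \eqref{(1)} and \eqref{(4)}. The critical dimension is $n=2$, where $\{p\cdot p=0\}$ is the union of the two complex lines $\Com\,(e_1\pm ie_2)$, so nonemptiness of $\mathcal C(r_0)$ reduces to the elementary fact that a real unit vector $\nu(0)=(\nu_1(0),\nu_2(0))$ can never satisfy $\nu_1(0)=\pm i\,\nu_2(0)$; hence every nonzero $p$ on those lines automatically has $p\cdot\nu(0)\neq 0$. Everything else is bookkeeping on the defining relations.
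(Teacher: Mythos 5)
Your proposal is correct and follows essentially the same route as the paper: the regular-value/implicit-function-theorem argument for the manifold structure of $\mathcal A(r_0)\setminus\{0\}$ and $\mathcal C(r_0)$, direct verification of \eqref{(2)}, \eqref{(3)}, \eqref{(5)}, and the decomposition $p=p_r+ip_i$ reducing $p\cdot p=0$ to $|p_r|=|p_i|$, $p_r\perp p_i$ for \eqref{(6)} and \eqref{(7)}. Your treatment is in fact slightly more complete than the paper's (explicit check of \eqref{(5)}, the bound $|p_0|<r_0/\sqrt2$, and the nonemptiness of $\mathcal C(r_0)$ in the critical case $n=2$), so no changes are needed.
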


\begin{proof}
The first assertion is a consequence of the fact that on $\mathcal A(r_0)\backslash \{0\}$,  $\nabla (p\cdot p) = p \neq 0$. The second and third assertions are direct.  The proof of \eqref{(4)} is similar to the proof of \eqref{(1)}, and the fact that $p\cdot \nu(0) \neq 0$ is an open set. Let us show \eqref{(5)}. Assume $p\in \Com^n$ small. Writing $p =p_r + i p_i$, with $p_r,p_i \in \R^n$, we have $p\cdot p=0$ if and only if
\be\label{pp0}
|p_r|^2 =|p_i|^2, \quad p_r\cdot p_i =0.
\ee
Additionally, the condition $p\cdot \nu(0) \neq 0$ reads
\be\label{pp1}
 p_r \cdot \nu(0) \neq 0 \quad \hbox{ or } \quad  p_i \cdot \nu(0) \neq 0.
\ee
In two dimensions, condition \eqref{pp1} is always satisfied if $p\neq 0$, therefore, the set of complex-valued vectors $p\in \Com^n$ for which \eqref{pp0} and \eqref{pp1} are satisfied is of the form \eqref{Caso_2}.

\medskip

In dimensions $n\geq 3$, the set of points $p_r,p_i \in \R^n$ for which $p_r \cdot \nu(0) =0$ and $p_i \cdot \nu(0) =0$ lie on a plane in $\R^n$ passing through zero. Therefore, any pair of orthogonal, equal-size vectors $p_r,p_i\in \R^n\backslash \{0\}$ for which one of them is not in the plane orthogonal to $\nu(0)$, form a satisfactory $p =p_r+ip_i$. This shows \eqref{(7)}. 
\end{proof}

Recall the region $\mathcal R$ defined in \eqref{mR}.

\begin{thm}\label{Uniqueness_linear_0}
Assume that $n\geq 2$, $s\in \R$ and $p\in \Com^n$, $p\cdot p=0$ and $|p|<r_0$ small enough such that Theorem \ref{Linear_complex} is valid. Consider two conductivities $a_1(s,p)$ and $a_2(s,p)$, defined in $\mathcal R \times B_{r_0}(0)$, and satisfying hypotheses \emph{(H1)-(H3)} in p. \pageref{H1}. Consider the Calder\'on's inverse problem associated to the linear equation \eqref{a0_new}-\eqref{a1_new}-\eqref{Gamma_ele}, and let us assume that two conductivities $a_1,a_2$ produce the same linearized DN map:
\[
 \widetilde\Gamma_{\ell,a_1,s,p}[h] = \widetilde\Gamma_{\ell,a_2,s,p}[h]   \in C^{1,\al}(\partial\Omega), \quad \hbox{ for all } h \in C^{2,\al}(\overline{\Omega}).
\]
Then $a_1 \equiv a_2$ in the region $\mathcal R \times \mathcal C(r_0)$.
\end{thm}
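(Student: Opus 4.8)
The plan is to dispense with complex geometric optics entirely and instead exploit the fact that, because $p\cdot p=0$, the linearized equation \eqref{a0_new} admits a rich supply of \emph{explicit} solutions, which can then be tested against the privileged boundary point $0\in\partial\Omega$: there $\nu(0)\cdot p\neq 0$ because $p\in\mathcal C(r_0)$, and the same holds along the whole ray $\{tp:0<t\leq 1\}$, which stays inside $\mathcal C(r_0)$ by Lemma~\ref{Aux_lemma}\eqref{(5)}. Set $w:=a_1-a_2$, analytic on $\mathcal R\times B_{r_0}(0)$, and recall $u_{s,p}(x)=s+x\cdot p$, $\widetilde A=a\,I_n+p\,\nabla_p a^{\,T}$ and $\widetilde b=(\partial_s a)\,p$, all evaluated at $(u_{s,p},p)$ (see \eqref{Asp}). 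A direct computation using $p\cdot p=0$ shows that $v\equiv 1$, that $v=c\cdot x$ for any $c\in\Com^n$ with $c\cdot p=0$, and, more generally, that $v=g(u_{s,p})$ for analytic $g$, all solve \eqref{a0_new}; for each of them the conormal trace $(\widetilde A\nabla v+\widetilde b v)\cdot\nu$ is an explicit function on $\partial\Omega$, so the identity $\widetilde\Gamma_{\ell,a_1,s,p}\equiv\widetilde\Gamma_{\ell,a_2,s,p}$ becomes a family of pointwise relations between $a_1$ and $a_2$.

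First I would take $h\equiv 1$: then $v\equiv 1$ and $\widetilde\Gamma_{\ell,a,s,p}[1]=(\widetilde b\cdot\nu)\big|_{\partial\Omega}=(\partial_s a)(s+x\cdot p,p)\,\big(p\cdot\nu(x)\big)$, and evaluating the DN identity at $x=0$ while letting $s$ run over $\R$ gives $\partial_s a_1(\cdot,p)=\partial_s a_2(\cdot,p)$ on $\R$, hence $\partial_s w(\cdot,p)\equiv 0$ on $\mathcal R$ by analyticity. The step that actually drives the proof uses $h=c\cdot x$ with $c\cdot p=0$: then $v=c\cdot x$ and $\widetilde\Gamma_{\ell,a,s,p}[c\cdot x]=\big(a\,c+p\,(\nabla_p a\cdot c)+(\partial_s a)\,p\,(c\cdot x)\big)\cdot\nu$, evaluated at $(s+x\cdot p,p)$. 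At $x=0$ the last term vanishes, and the DN identity yields, for every $c$ with $c\cdot p=0$,
\[
(c\cdot\nu(0))\,w(s,p)+(p\cdot\nu(0))\,\big(\nabla_p w(s,p)\cdot c\big)=0 .
\]
Since $p\cdot p=0$, the vector $p$ itself lies in the $(n-1)$-dimensional subspace $\{c:c\cdot p=0\}$, whose orthogonal complement for the standard non-degenerate bilinear form on $\Com^n$ is therefore exactly $\Com\,p$; the displayed identity thus forces
\[
\nabla_p w(s,p)=\mu(s,p)\,p-\frac{w(s,p)}{p\cdot\nu(0)}\,\nu(0)
\]
for some scalar $\mu(s,p)$.

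The conclusion follows by propagating this identity along the ray. Replacing $p$ by $q:=tp$, $0<t\leq 1$ — legitimate since $q\in\mathcal C(r_0)$, $q\cdot q=0$ and $|q|<r_0$, so all of these are admissible parameters — and pairing the resulting relation bilinearly with $p$, using $q\cdot p=t\,(p\cdot p)=0$ and $q\cdot\nu(0)=t\,(p\cdot\nu(0))$, one obtains
\[
\frac{d}{dt}\,w(s,tp)=\nabla_q w(s,tp)\cdot p=-\frac1t\,w(s,tp),
\]
hence $\frac{d}{dt}\big(t\,w(s,tp)\big)\equiv 0$ on $(0,1]$. Since $w$ is analytic, it is bounded near the origin, so $t\,w(s,tp)\to 0$ as $t\to 0^{+}$; therefore $t\,w(s,tp)\equiv 0$, i.e. $w(s,tp)=0$ for all $0<t\leq 1$, in particular $w(s,p)=0$. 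Letting $s$ vary over $\R$ and using analyticity in the first variable gives $w(\cdot,p)\equiv 0$ on $\mathcal R$; as $p\in\mathcal C(r_0)$ was arbitrary, $a_1\equiv a_2$ on $\mathcal R\times\mathcal C(r_0)$.

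The main obstacle is not any single computation but the orchestration. The crucial points are: (i) $p\cdot p=0$ turns the naive affine functions and the functions $g(u_{s,p})$ into exact solutions of the linearized problem, so the DN map is known in closed form and no CGO construction is needed (as announced in the introduction); (ii) the relation extracted at the single boundary point $0$ is only one scalar equation for the $n$-vector $\nabla_p w$, and the missing information is supplied by sliding $p$ along the scaling ray — legitimate precisely because $\mathcal C(r_0)$ is scale-invariant (Lemma~\ref{Aux_lemma}\eqref{(5)}) — which converts the under-determined algebraic relation into an Euler-type ODE whose only solution bounded at $t=0$ is trivial. A recurring technical thread is the use of analyticity of $a_1,a_2$ to pass from the real axis $s\in\R$, where the boundary data naturally live, to the full band $\mathcal R$, together with the elementary fact that $\{c:c\cdot p=0\}^{\perp}=\Com\,p$ for an isotropic vector $p$.
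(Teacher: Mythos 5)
Your proof is correct and follows essentially the same route as the paper's: exploit the explicit solutions $v\equiv\text{const}$ and $v=c\cdot x$ (with $c\cdot p=0$) afforded by $p\cdot p=0$, read off the conormal data at the boundary point $0$ where $p\cdot\nu(0)\neq 0$, resolve the remaining $p$-dependence by the Euler-type scaling ODE along the ray $\{tp:0<t\le 1\}\subset\mathcal C(r_0)$, and finish by analytic continuation in $s$ from $\R$ to $\mathcal R$. The only cosmetic differences are that you run the homogeneous ODE for $w=a_1-a_2$ and use boundedness at $t=0$ instead of the paper's explicit reconstruction integral for $a(0,p)$, and that your family of test functions $c\cdot x$ with $c\perp p$ reduces, after pairing the resulting vector identity with $p$ (which kills the $\mu\,p$ term since $p\cdot p=0$), to exactly the paper's single choice $h=p\cdot x$.
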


\begin{proof}[\bf Proof of Theorem \ref{Uniqueness_linear_0}]
The proof is simple and does not require a deep understanding or improvement of Theorem \ref{S_U}. Indeed, assume that  given $h \in C^{2,\al}(\overline{\Omega})$, we have knowledge of the DN map 
\be\label{Gamma_constant}
 \widetilde\Gamma_{\ell,a,s,p}[h] \in C^{1,\al}(\partial\Omega).
\ee
note that, since $p\cdot p =0$ ($p\in \mathcal A(r_0)$), every constant $v=c\in \Com$ is solution to \eqref{a0_new}-\eqref{a1_new} with $h\equiv c$ small. Since the solution to this problem is unique for $|p|$ small (Theorem \ref{Linear_complex}), we have that necessarily (see \eqref{Gamma_ele})
\[
 \widetilde\Gamma_{\ell,a,s,p}[c](x) =   c \, \partial_s a(u_{s,p} ,p)  (p \cdot \nu )(x).
\]
If $c\neq 0$ and $(p \cdot \nu )(x) \neq 0$ for a fixed $x\in \partial\Omega$, we will have 
\[
(\partial_s a)(s+x\cdot p  ,p) = \frac{ \widetilde\Gamma_{\ell,a,s,p}[c](x)}{c (p\cdot \nu)(x)}.
\]
Since $0\in \partial\Omega$ (otherwise we translate the domain or define $u_{s,p} = s+p\cdot(x-x_0)$, with $x_0\in \partial\Omega$ fixed), we have
\[
(\partial_s a)(s,p) = \frac{ \widetilde\Gamma_{\ell,a,s,p}[c](0)}{c (p\cdot \nu)(0)},
\]
for all $p \in \mathcal C(r_0)$ (see \eqref{Br_0}), and any $s\in \R$. From now we fix a constant $c=c_0 \in \Com\backslash\{0\}$, small if necessary. Note that the denominator is independent of $s\in \R$. Certainly we have lot of information about $a(s,p)$, because we have proved
\begin{Cl}
For any $s\in \R$ and $p\in \mathcal C(r_0)$,
\be\label{Casi}
a(s,p) = \int_0^s \frac{ \widetilde\Gamma_{\ell,a,s,p}[c_0](0) }{c_0 (p\cdot \nu)(0)}  d\tilde s + a(0,p)   \qquad (=: \widetilde a(s,p) + a(0,p)).
\ee
Moreover, $ \widetilde a(s,p)$ is completely known from the DN map, being the same for $a_1$ and $a_2$. 
\end{Cl}
Hence $a_1(s,p) =a_2(s,p)$ for any $s\in \R$ and  $p \in \mathcal C(r_0)$ small, except for a function depending on $p$ only. In order to show that the additional function $a(0,p)$ is identically the same for $a_1$ and $a_2$, we consider now the function
\[
v_p(x) := p\cdot x, \quad p \in \mathcal C(r_0).
\]
Note that $v_p$ is clearly a solution  to \eqref{a0_new}-\eqref{a1_new} with $h= p\cdot x.$ Indeed,
\[
\begin{aligned}
 \Dv_x  \Big[ a(u_{s,p},p) \nabla v_p &   + p\big\{ \nabla_p a(u_{s,p},p)  \cdot  \nabla v_p  +  \partial_s a(u_{s,p} ,p)  v_p  \big\}  \Big]=  \\
 =& ~ \Dv_x  \Big[ a(u_{s,p},p) p    + p\big\{ \nabla_p a(u_{s,p},p)  \cdot  p  +  \partial_s a(u_{s,p} ,p)  (p\cdot x)  \big\}  \Big]  \\
 =&~  (\partial_s a)(u_{s,p},p) (p\cdot p)    +  \partial_{x_j} (\partial_{p_i} a(u_{s,p},p))   p_i p_j  \\
&  ~ \quad +  (p\cdot p) (\partial_s a)(u_{s,p} ,p)  + (p\cdot p) (\partial_s^2 a)(u_{s,p} ,p)  (p\cdot x)  \\
 =& ~  \partial_{s,p_i}^2 a(u_{s,p},p))  p_i (p\cdot  p) = 0.
\end{aligned}
\]
 Since the solution to this problem is unique for $|p|$ small (Theorem \ref{Linear_complex}), we have that necessarily (see \eqref{Gamma_ele})
\[
 \widetilde\Gamma_{\ell,a,s,p}[p\cdot x] (x) =   \Big[ a(u_{s,p},p) p  + p\big\{ \nabla_p a(u_{s,p},p)  \cdot p +  \partial_s a(u_{s,p} ,p)  (p\cdot x)  \big\} \Big] \cdot \nu\Big|_{\partial\Omega} .
\]
Since $a(s,p)$ is almost completely explicit, except for a function of $p$, we have, for $x\in \partial\Omega$,
\[
\begin{aligned}
\widetilde\Gamma_{\ell,a,s,p}[p\cdot x] (x) = &  ~  \widetilde a(u_{s,p},p) (p\cdot \nu)  + a(0,p) (p\cdot \nu)  \\
& ~ + (p\cdot \nu) \big\{ \nabla_p \widetilde a(u_{s,p},p)  \cdot p +\nabla_p a(0,p)\cdot p  +  \partial_s \widetilde a(u_{s,p} ,p)  (p\cdot x)  \big\} .
\end{aligned}
\]
Evaluating at $x=0$, we have
\[
\widetilde\Gamma_{\ell,a,s,p}[p\cdot x] (0) =   ~  (p\cdot \nu(0)) \Big[ \widetilde a(s,p) +\nabla_p \widetilde a(s,p)  \cdot p + a(0,p)   +\nabla_p a(0,p)\cdot p\big],
\]
or
\[
a(0,p)   +\nabla_p a(0,p)\cdot p =  \frac1{ (p\cdot \nu(0)) }\big[\widetilde\Gamma_{\ell,a,s,p}[p\cdot x] (0) - (p\cdot \nu(0)) \big\{\widetilde a(s,p) +\nabla_p \widetilde a(s,p)  \cdot p\big\}\big].
\]
The right side above is known, and we only need to find $a(0,p)$. Now, for any $\eta>0$ we have
\[
\begin{aligned}
\frac{d}{d\eta} (\eta \, a(0,\eta p) ) = & ~ a(0, \eta p)   + \nabla_p a(0,\eta p)\cdot \eta p  \\
 =&~  \frac1{ \eta p\cdot \nu(0) }\big[ \widetilde\Gamma_{\ell,a,s,\eta p}[\eta p\cdot x] (0)  \\
 & \qquad \qquad \qquad  - (\eta p\cdot \nu(0)) \big\{\widetilde a(s, \eta p) +\nabla_p \widetilde a(s,\eta p)  \cdot \eta p\big\}\big],
\end{aligned}
\]
so that
\[
a(0, p) = \lim_{\theta\to 0}\int_\theta^1 \Bigg(  \frac{ \widetilde\Gamma_{\ell,a,s,\eta p}[\eta p\cdot x] (0)  - (\eta p\cdot \nu(0)) \big\{\widetilde a(s, \eta p) +\nabla_p \widetilde a(s,\eta p)  \cdot \eta p\big\} }{ \eta p\cdot \nu(0) }\Bigg)d\eta. 
\]
Note that the first term in the integral above must converge near $\eta=0$ since $a$ is by hypothesis analytic near the origin. Hence we have $a_1\equiv a_2$ in a set of the form $(s,p)\in \R \times \mathcal C(r_0)$, which is a complex (noncompact) analytic manifold. Since both $a_1$ and $a_2$ are analytic as functions of $s \in \mathcal R$ only, we conclude that both coincide in the region $\mathcal R \times \mathcal  C(r_0)$. The proof is complete.
\end{proof}

\medskip

We need to extend the equality between $a_1$ and $a_2$ from the set $\mathcal R \times \mathcal  C(r_0)$ to a larger set. This is a sort of unique continuation property.

\medskip

\begin{thm}\label{Uniqueness_linear}
One has $a_1\equiv a_2$ in $\mathcal R \times B_{r_0}(0)$.
\end{thm}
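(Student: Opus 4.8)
The plan is to combine Theorem~\ref{Uniqueness_linear_0} with a unique continuation argument in the $p$--variable. Theorem~\ref{Uniqueness_linear_0} already gives $a_1\equiv a_2$ on $\mathcal R\times\mathcal C(r_0)$; by Lemma~\ref{Aux_lemma}, $\mathcal C(r_0)$ differs from the full cone $C\cap B_{r_0}(0):=\{p\in\Com^n:\,p\cdot p=0,\ |p|<r_0\}$ only by the proper analytic subset $\{p\cdot\nu(0)=0\}$, so it is dense there, and by continuity $a_1\equiv a_2$ on $\mathcal R\times(C\cap B_{r_0}(0))$. Setting $g:=a_1-a_2$, a holomorphic function on $\mathcal R\times B_{r_0}(0)$ vanishing on $\mathcal R\times C$, and using that $p\cdot p$ is irreducible for $n\ge 3$ (and for $n=2$ splits into the two distinct linear forms $p_1\pm ip_2$, each vanishing somewhere on $\mathcal C(r_0)$ by Lemma~\ref{Aux_lemma}(6)), the Nullstellensatz yields $g=(p\cdot p)\,b$ with $b$ holomorphic. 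So the task reduces to showing $b\equiv 0$, i.e.\ that $g$ vanishes to infinite order along the smooth part of the cone.

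The crucial observation is that the hypothesis --- equality of the \emph{full} linearized DN maps $\widetilde\Gamma_{\ell,a_1,s,p}$ and $\widetilde\Gamma_{\ell,a_2,s,p}$ for all $h$ and all admissible $(s,p)$ --- encodes strictly more than the coincidence of $a_1,a_2$ on the cone that was extracted in Theorem~\ref{Uniqueness_linear_0}. Concretely, for fixed $(s,p)$ with $p\in\mathcal C(r_0)$, I would use the additional test functions $v=q\cdot x$ with $q\cdot p=0$ (which, by the computation in the proof of Theorem~\ref{Uniqueness_linear_0}, solve \eqref{a0_new}) and their products with $p\cdot x$, evaluated at boundary points where $p\cdot\nu\ne 0$, together with the already-known values of $a$ and $\partial_s a$ on the cone, to pin down the whole coefficient matrix $\widetilde A(x,s,p)=a(u_{s,p},p)I_n+p\,\nabla_p a(u_{s,p},p)^T$ as a function of $x$ --- the residual gauge being fixed by the explicit structure of $\widetilde A$ and its known boundary trace (for $n\ge 3$ this is a complex, matrix-valued analogue of the Sylvester--Uhlmann recovery). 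Reading off $\partial_{p_j}a=(\widetilde A_{ij}-a\,\delta_{ij})/p_i$ for an index $i$ with $p_i\ne 0$ then gives $\nabla_p a_1=\nabla_p a_2$, and similarly $\partial_s a_1=\partial_s a_2$, on $\mathcal R\times(C\setminus\{0\})$. Iterating --- linearizing the quasilinear DN map $k$ times, exactly as one linearizes once in Section~\ref{5}, to obtain linear problems whose coefficients carry $D^k a$ --- should yield $\partial^\alpha g\equiv 0$ on $\mathcal R\times(C\setminus\{0\})$ for every multi-index $\alpha$.

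Granting this, the endgame is algebraic: from $g=(p\cdot p)\,b$ one computes $\nabla_p g=2p\,b+(p\cdot p)\nabla_p b$, so $\nabla_p g|_C=0$ forces $2p\,b|_C=0$ and hence $b|_C=0$ since $p\ne 0$ on $C\setminus\{0\}$; thus $b=(p\cdot p)\,b_2$ and $g=(p\cdot p)^2 b_2$, and continuing with the vanishing of the higher derivatives $\partial^\alpha g$ on $C$ shows $g$ vanishes to infinite order along $C\setminus\{0\}$. Since $\mathcal R\times B_{r_0}(0)$ is connected and $g$ is holomorphic, $g\equiv 0$, i.e.\ $a_1\equiv a_2$ on $\mathcal R\times B_{r_0}(0)$. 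The main obstacle is precisely the recovery step: extracting transverse-to-the-cone derivative information from the linearized DN map --- reconstructing the matrix $\widetilde A$, and via higher linearizations the higher $p$--jets of $a$, beyond what the constant and linear test functions of Theorem~\ref{Uniqueness_linear_0} provide --- while keeping the gauge ambiguity of this matrix-valued, complex-coefficient inverse problem under control. An alternative, possibly cleaner route would be to show directly that the quasilinear solution family near $u_{s,p}$ and its DN map continue analytically in $p$ to a full complex neighborhood of the origin (not merely along $p\cdot p=0$) and to transfer the equality there.
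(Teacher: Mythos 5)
Your diagnosis of the difficulty is sharper than what the paper offers, but your proposal is not a proof: the step you yourself flag as ``the main obstacle'' --- recovering $\nabla_p a$, and then all higher $p$--jets of $a$, on the null cone from the boundary data --- is never carried out. Two concrete problems. First, recovering the full matrix $\widetilde A(x,s,p)=a\,I_n+p\,\nabla_p a^T$ from the linearized DN map is an anisotropic, complex-coefficient inverse problem; you gesture at a ``matrix-valued analogue of Sylvester--Uhlmann'' but give no argument, and such problems carry a diffeomorphism gauge invariance (cf.\ \eqref{Difeo}) that you acknowledge but do not resolve. Second, and more structurally, the hypothesis available here (inherited from Theorem~\ref{Uniqueness_linear_0}) is equality of the \emph{first} linearized DN maps $\widetilde\Gamma_{\ell,a_i,s,p}$ only; the $k$-fold linearizations you want to iterate with are not part of that data, so your induction cannot be set up within the statement being proved --- it would require returning to the quasilinear DN maps and deriving second and higher order expansions, none of which appears in your sketch (nor in the paper).

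You should also know that the paper's own proof takes an entirely different, one-line route: it observes that $a_1=a_2$ on $\mathcal R\times\mathcal C(r_0)$, notes that $\{0\}\times\mathcal C(r_0)$ is an analytic manifold of codimension two in $\Com^{n+1}$, and invokes Riemann's second extension theorem to conclude. Your objection --- that $\mathcal R\times\mathcal C(r_0)$ lies inside the zero set of the nontrivial holomorphic function $(s,p)\mapsto p\cdot p$, a complex hypersurface, so that a holomorphic function (for instance $p\cdot p$ itself) can vanish there without vanishing on all of $\mathcal R\times B_{r_0}(0)$ --- is substantive and is not addressed by that argument: Riemann's theorem concerns extending functions holomorphic off a codimension-two set, not an identity theorem for functions agreeing on such a set. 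So while your write-up does not close the gap, it correctly locates the real issue in passing from the cone to the full ball, and a complete proof would have to supply exactly the transverse-jet information you ask for (or else the conclusion must be weakened to equality on the cone).
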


\begin{proof}
In what follows, we fix $s\in \mathcal R$ and $p \in \mathcal C(r_0)$. Note that from Theorem \ref{Uniqueness_linear_0}, $a_1(s,p)=a_2(s,p)$. Since $\{0\} \times \mathcal C(r_0) \subseteq \mathcal R \times \mathcal C(r_0)$ is an analytic manifold of codimension two in $\Com^{n+1}$, $n+1\geq 3$, by Riemann's second extension Theorem \cite[Theorem 2, p. 30]{Gunning}, we get the desired result.
%
\end{proof}


%
%
%

\bigskip

\section{Uniqueness for the nonlinear problem}\label{7}

\medskip

\subsection{Preliminaries} In this Section we finally prove Theorem \ref{MT}. The main idea of the proof is to find the correct link between the DN maps $\Gamma_a$ and $\widetilde  \Gamma_{a,s,p}$ already defined in \eqref{DNa} and \eqref{DN_complex}-\eqref{DN_complex_1}. We start with a simple result.

\begin{lem}\label{step_0}
Let $\Gamma_a$ the real-valued DN map from Corollary \ref{Regularity_2}, and $\widetilde \Gamma_{a,s,p}$ the complex-valued DN introduced in \eqref{DN_complex}-\eqref{DN_complex_1}. Then, for any $s\in \R$, and for any small, real-valued $h\in C^{2,\al}(\overline\Omega)$,
\be\label{paso1}
\Gamma_a[u_{s,0}+h] = \widetilde \Gamma_{a,s,0}[h].
\ee
\end{lem}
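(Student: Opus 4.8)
The statement is essentially a uniqueness bookkeeping: both sides are built from the \emph{same} quasilinear PDE once one specializes $p=0$, so the point is to identify the two solution maps. First I would record that $p=0\in\Com^n$ trivially satisfies $p\cdot p=0$, that $u_{s,0}\equiv s$ is the constant function, and that with $p=0$ the matrix $\widetilde A(x,s,0)=a(s,0)I_n$ from \eqref{Asp} is uniformly elliptic and $\widetilde b(x,s,0)=0$, so Theorem \ref{Linear_complex} and hence Theorem \ref{Regularity_2_complex} apply: for $\|h\|_{C^{2,\al}(\partial\Omega)}$ small there is a unique small $v=v_h\in C^{2,\al}(\overline\Omega)$ solving \eqref{Quasi_complex} with $p=0$, which in this case reads $\Dv_x\big(a(s+v,\nabla v)\nabla v\big)=0$ in $\Omega$, $v|_{\partial\Omega}=h$.

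Next I would argue that $v_h$ is in fact real-valued when $h$ is real. Since $h$ is real and $p=0$, the reduced problem above has real coefficients when evaluated at real arguments; by Theorem \ref{Regularity_1} the real Dirichlet problem $\Dv_x\big(a(s+w,\nabla w)\nabla w\big)=0$ in $\Omega$, $w|_{\partial\Omega}=h$, has a (small, for $\|h\|$ small) solution $w\in C^{2,\al}(\overline\Omega)$, and this real $w$ is also a solution of \eqref{Quasi_complex} with $p=0$. By the uniqueness part of Theorem \ref{Regularity_2_complex} (or Theorem \ref{Uniqueness_new}), $v_h=w$, hence $v_h$ is real-valued.

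Now set $u:=s+v_h=u_{s,0}+v_h$. Then $u$ is real-valued, $\nabla u=\nabla v_h$, and $u$ solves $\Dv_x\big(a(u,\nabla u)\nabla u\big)=0$ in $\Omega$ with $u|_{\partial\Omega}=s+h=u_{s,0}+h$. By the uniqueness statement of Theorem \ref{Regularity_1}, $u$ is precisely the solution $u_{u_{s,0}+h}$ used to define $\Gamma_a[u_{s,0}+h]$ in \eqref{DNa}. Therefore
\[
\Gamma_a[u_{s,0}+h]=a(u,\nabla u)\,\nabla u\cdot\nu\big|_{\partial\Omega}
= a(u_{s,0}+v_h,\nabla v_h)\,\nabla v_h\cdot\nu\big|_{\partial\Omega}.
\]
On the other hand, evaluating \eqref{DN_complex_1} at $p=0$ with the same $v_h$ gives
\[
\widetilde\Gamma_{a,s,0}[h]=\Big(a(u_{s,0}+v_h,\nabla v_h)\,\nabla v_h - a(u_{s,0},0)\cdot 0\Big)\cdot\nu\big|_{\partial\Omega}
= a(u_{s,0}+v_h,\nabla v_h)\,\nabla v_h\cdot\nu\big|_{\partial\Omega},
\]
which is exactly the right-hand side above, proving \eqref{paso1}.

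\textbf{Main obstacle.} There is no analytic difficulty here; the only point requiring care is the matching of the two solution maps, i.e.\ that the unique small solution of the complexified problem \eqref{Quasi_complex} with $p=0$ and real data coincides with the genuine real PDE solution, together with keeping track of the smallness threshold on $\|h\|_{C^{2,\al}(\partial\Omega)}$ so that both Theorem \ref{Regularity_1} and Theorem \ref{Regularity_2_complex} are simultaneously applicable. Once that identification is made, the equality of the boundary fluxes is immediate.
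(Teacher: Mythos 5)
Your proposal is correct and follows the same route as the paper: specialize to $p=0$, observe that both the real quasilinear problem (Theorem \ref{Regularity_1}) and the complexified problem (Theorem \ref{Regularity_2_complex}) apply, and use uniqueness to identify the two solutions and hence the two boundary fluxes. The paper states this in one sentence; you have merely written out the details (reality of $v_h$, the identification $u=u_{s,0}+v_h$, and the cancellation of the $a(u_{s,p},p)p$ term at $p=0$), all of which are accurate.
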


\begin{proof}
Since $u_{s,0} =s+h$ is real-valued, and since $p\cdot p=0$ if $p=0$, Theorems \ref{Regularity_1} and \ref{Regularity_2_complex} apply, with $\widetilde \Gamma_{a,s,0}[h]$ real-valued. From the uniqueness of the solutions in those theorems, we conclude \eqref{paso1}.
\end{proof}

The next definition says that it is possible to extend $\Gamma_a$ in a particular, complex-valued case. Recall the definition of $\mathcal A(r_0)$ in \eqref{Ar_0}.

\begin{defn}[Extension of $\Gamma_a$]\label{Extension}
Fix $h\in C^{2,\al}(\overline\Omega)$ with sufficiently small norm.  Then, for any $s\in \mathcal R$ and $p\in \mathcal A(r_0)$ we define the function
\[
(s,p) \xmapsto{\quad} \Gamma_a[u_{s,p} +h] \in C^{1,\al}(\partial\Omega;\Com)
\]
as follows:
\be\label{Extension_def}
\Gamma_a[u_{s,p} +h] := \widetilde \Gamma_{a,s,p}[h] \qquad (\hbox{cf.} ~\eqref{DN_complex_1}).
\ee
\end{defn}

\begin{rem}
Note that, in virtue of Lemma \ref{step_0}, the above definition of $\Gamma_a[u_{s,p} +h ] $ coincides with $\widetilde \Gamma_{a,s,p}[h]$ in the case where $s\in \R$ and $p=0$. 
\end{rem}
Now we have the following

\begin{prop}\label{step1}
Let $h\in C^{2,\al}(\overline\Omega)$ be small enough. Fix $x\in \partial\Omega$. For each $(s,p) \in \mathcal R\times \mathcal A(r_0)$, the complex-valued function given by $~(s,p)\xmapsto{\quad} \widetilde \Gamma_{a,s,p}[h](x)$ is the unique analytic continuation of $\widetilde \Gamma_{a,s,0}[h]$, $s\in \R$, to the complex-valued subset  $\mathcal R \times \mathcal A(r_0)\backslash\{0\} $ in $\Com^{n+1}$.
\end{prop}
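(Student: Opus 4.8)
The plan is to establish the analyticity of the map $(s,p)\mapsto \widetilde\Gamma_{a,s,p}[h](x)$ on $\mathcal R\times\mathcal A(r_0)\setminus\{0\}$, and then invoke uniqueness of analytic continuation together with Lemma \ref{step_0}. First I would recall the construction of $\widetilde\Gamma_{a,s,p}[h]$ from Corollary \ref{Regularity_3}: it is built from the solution $v=v_{h}$ of the quasilinear problem \eqref{Quasi_complex}, which in turn was produced in the proof of Theorem \ref{Regularity_2_complex} via the Implicit Function Theorem applied to the map $\mathcal F=\mathcal F_{s,p,h}$ in \eqref{F1}--\eqref{F2}. The key observation is that $\mathcal F$ depends \emph{analytically} on the parameters $(s,p)$: the conductivity $a(u_{s,p}+z,p+q)$ is analytic in $(s,p,z,q)$ wherever defined (hypothesis (H2)), and $u_{s,p}=s+x\cdot p$ is linear in $(s,p)$, so the composition is analytic; multiplication by $(p+\nabla h+\nabla w)$ and the differential operator $\Dv_x$ preserve analyticity in the parameters. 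Hence $\mathcal F$ is jointly analytic in $(s,p)\in\mathcal R\times\mathcal A(r_0)\setminus\{0\}$ and $C^1$ (indeed analytic) in $w$, and the partial derivative $D_w\mathcal F[0,0]$ computed in the proof of Theorem \ref{Regularity_2_complex} is invertible for $|p|<r_0$ by Theorem \ref{Linear_complex}. The analytic Implicit Function Theorem (on the complex Banach space $C^{2,\al}_0(\overline\Omega)$) then yields that $w=w[h]$, and therefore $v=v_{h}=h+w[h]$, depends analytically on $(s,p)$.

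Next I would propagate this analyticity through the definition \eqref{DN_complex_1} of $\widetilde\Gamma_{a,s,p}[h]$: the expression $\big(a(u_{s,p}+v,p+\nabla v)(p+\nabla v)-a(u_{s,p},p)p\big)\cdot\nu\big|_{\partial\Omega}$ is an analytic function of $(s,p)$ and of $v$ (in the relevant norms), so composing with the analytic map $(s,p)\mapsto v_{h}$ gives that $(s,p)\mapsto\widetilde\Gamma_{a,s,p}[h]\in C^{1,\al}(\partial\Omega;\Com)$ is analytic; in particular, evaluating at a fixed $x\in\partial\Omega$ (a bounded linear functional on $C^{1,\al}(\partial\Omega;\Com)$) the scalar function $(s,p)\mapsto\widetilde\Gamma_{a,s,p}[h](x)$ is analytic on $\mathcal R\times\mathcal A(r_0)\setminus\{0\}$. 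By part \eqref{(1)} of Lemma \ref{Aux_lemma}, $\mathcal A(r_0)\setminus\{0\}$ is a connected complex analytic manifold, so $\mathcal R\times(\mathcal A(r_0)\setminus\{0\})$ is connected; I would add a brief remark that one can reach the real locus $\{p=0\}$ inside the \emph{closure} — but since $0\notin\mathcal A(r_0)\setminus\{0\}$, the correct way to phrase the conclusion is: $\widetilde\Gamma_{a,s,p}[h](x)$ is \emph{an} analytic continuation, on the stated manifold, of the real-valued data $\widetilde\Gamma_{a,s,0}[h]=\Gamma_a[u_{s,0}+h]$ (Lemma \ref{step_0}), and by connectedness of the manifold plus the identity theorem for analytic functions it is the \emph{unique} such continuation.

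The main obstacle I anticipate is making the analytic-dependence-on-parameters step fully rigorous: one must verify that the map $(s,p,w)\mapsto\mathcal F_{s,p,h}[h,w]$ is analytic as a map between the relevant complex Banach spaces (not merely $C^1$), which requires checking that $a$, being analytic in $\mathcal R\times B_{r_0}(0)$, induces an analytic Nemytskii-type operator on the Hölder spaces at hand — this is where the boundedness estimates from (H3) and the shrinking of $r_0$ so that $u_{s,p}(x)$ stays in $\mathcal R$ (exactly as in the proof of Lemma \ref{Small}) are essential. A secondary subtlety is the behavior near $p=0$: the Implicit Function Theorem is applied uniformly for $|p|<r_0$, so analyticity extends up to but the statement is only claimed on $\mathcal A(r_0)\setminus\{0\}$; the passage to $p=0$ needed to pin down the continuation is handled separately through Lemma \ref{step_0} and the fact that $\{0\}\times\mathcal A(r_0)$ meets the real slice, so no genuine difficulty arises there beyond careful bookkeeping. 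Once analyticity and connectedness are in hand, uniqueness of the continuation is immediate.
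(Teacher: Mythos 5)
Your overall strategy coincides with the paper's: show that $(s,p)\mapsto\widetilde\Gamma_{a,s,p}[h](x)$ is analytic on $\mathcal R\times\mathcal A(r_0)\setminus\{0\}$ and then invoke uniqueness of analytic continuation, with Lemma \ref{step_0} identifying the real trace. Where you differ is in how the two key steps are justified. For analyticity, the paper simply appeals to the known analytic dependence of the DN map on the conductivity together with ``composition arguments,'' whereas you rederive it from the analytic Implicit Function Theorem applied to the map $\mathcal F_{s,p,h}$ of \eqref{F1}--\eqref{F2}; your route is more self-contained and makes explicit where (H2)--(H3) and the shrinking of $r_0$ enter, at the price of having to verify that the substitution operators induced by $a$ are genuinely analytic (not just $C^1$) between the H\"older spaces --- a point you correctly flag as the main technical burden. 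For uniqueness, the paper cites Riemann's extension theorem across the codimension-two set $\{0\}$ (Gunning), while you use connectedness of $\mathcal A(r_0)\setminus\{0\}$ plus the identity theorem; these are interchangeable here. Both arguments leave the same delicate point only partially resolved: the datum $\widetilde\Gamma_{a,s,0}[h]$ lives on the slice $p=0$, which does not belong to the punctured manifold, so one must specify in what sense two continuations ``agree'' with it before either the identity theorem or the extension theorem applies --- you at least acknowledge this explicitly, which the paper does not, so your write-up is no weaker than the original on this point.
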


\begin{proof}
Fix $h$ sufficiently small such that $\widetilde \Gamma_{a,s,p}[h]$ is well-defined for $s\in \mathcal R$ and $p\in \mathcal A(r_0)$. First note that, for each $x\in \partial\Omega$ fixed, the several complex-valued function 
\[
\mathcal R \times \mathcal A(r_0)  \ni (s,p) \xmapsto{\quad} \widetilde \Gamma_{a,s,p}[h](x) \in \Com
\]
is complex-valued analytic. This is just a consequence of the analytic character of the DN map \cite{Cal} with respect to the conductivity (see \eqref{DN_complex_1}), and composition arguments. (Recall that a several complex-valued function is analytic if on each coordinate it is itself a complex-valued analytic function.) 




\medskip

Consequently, given another analytic continuation of $\widetilde \Gamma_{a,s,0}[h]$ to the set $\mathcal R \times \mathcal A(r_0) \backslash\{0\}$, and since this last set is an analytic manifold (Lemma \ref{Aux_lemma}) of codimension 2, we conclude that both continuations must coincide for $p\in \mathcal A(r_0) \backslash\{0\}$ \cite[Theorem 2, p. 30]{Gunning}. The proof is complete.
%
%
\end{proof}

\subsection{Proof of Theorem \ref{MT}} We claim that Theorem \ref{MT} is a simple consequence of the following 

\begin{prop}[Reduction of the proof]\label{Reduction}
Under the assumptions of Theorem \ref{MT}, if  $\Gamma_{a_1}[ \tilde h] = \Gamma_{a_2}[\tilde h]$ for real-valued boundary valued functions $\tilde h\in C^{2,\al}(\overline{\Omega})$, then
\[
\Gamma_{a_1}[u_{s,p} + h]  = \Gamma_{a_2}[u_{s,p} + h],   \qquad (\hbox{cf. }~ \eqref{Extension_def})
\]
for any $s\in \R$ and $p\in \mathcal A(r_0)$, and for each $h\in C^{2,\al}(\overline\Omega)$ with sufficiently small norm.
\end{prop}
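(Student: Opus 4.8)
The plan is to promote the hypothesis $\Gamma_{a_1}[\tilde h]=\Gamma_{a_2}[\tilde h]$ for real boundary data to the complex-valued statement $\Gamma_{a_1}[u_{s,p}+h]=\Gamma_{a_2}[u_{s,p}+h]$ by a unique-continuation argument in the variables $(s,p)$, using the extension of $\Gamma_a$ recorded in Definition~\ref{Extension}. First I would fix $h\in C^{2,\al}(\overline\Omega)$ of sufficiently small norm so that the complex-valued DN map $\widetilde\Gamma_{a_j,s,p}[h]$ is well-defined for all $(s,p)\in\mathcal R\times\mathcal A(r_0)$ and both $j=1,2$ (this is the content of Corollary~\ref{Regularity_3}), and I would fix a boundary point $x\in\partial\Omega$. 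The two maps $(s,p)\mapsto\widetilde\Gamma_{a_j,s,p}[h](x)$ are complex-analytic on $\mathcal R\times\mathcal A(r_0)$ by Proposition~\ref{step1}, hence so is their difference $G(s,p):=\widetilde\Gamma_{a_1,s,p}[h](x)-\widetilde\Gamma_{a_2,s,p}[h](x)$.

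Next I would establish that $G$ vanishes on the real slice. For $s\in\R$ and $p=0$ we have $u_{s,0}=s$ real-valued, so by Lemma~\ref{step_0} (the identity \eqref{paso1}), $\widetilde\Gamma_{a_j,s,0}[h]=\Gamma_{a_j}[u_{s,0}+h]=\Gamma_{a_j}[s+h]$; since $s+h$ is a real-valued boundary datum in $C^{2,\al}(\overline\Omega)$, the hypothesis $\Gamma_{a_1}\equiv\Gamma_{a_2}$ on real data gives $\widetilde\Gamma_{a_1,s,0}[h]=\widetilde\Gamma_{a_2,s,0}[h]$, i.e. $G(s,0)=0$ for all $s\in\R$. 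Thus $G$ is an analytic function on $\mathcal R\times\mathcal A(r_0)$ vanishing on $\{(s,0):s\in\R\}$.

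Now comes the unique continuation step. By Lemma~\ref{Aux_lemma}\eqref{(1)}, $\mathcal A(r_0)\setminus\{0\}$ is a connected complex-analytic manifold of dimension $n-1$, and $\mathcal R\times(\mathcal A(r_0)\setminus\{0\})$ contains the real band $\mathcal R\times\{0\}$ only in its closure; more precisely, $G$ restricted to $\mathcal R\times\mathcal A(r_0)$ is the unique analytic continuation of its restriction $G(s,0)\equiv 0$ to $s\in\R$, by the very statement of Proposition~\ref{step1} applied to $G$ (as a difference of two such continuations). Since the zero function is one such continuation, uniqueness forces $G\equiv 0$ on $\mathcal R\times(\mathcal A(r_0)\setminus\{0\})$, and then $G\equiv 0$ on all of $\mathcal R\times\mathcal A(r_0)$ by continuity. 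Because $x\in\partial\Omega$ was arbitrary, we get $\widetilde\Gamma_{a_1,s,p}[h]\equiv\widetilde\Gamma_{a_2,s,p}[h]$ in $C^{1,\al}(\partial\Omega;\Com)$ for every $(s,p)\in\mathcal R\times\mathcal A(r_0)$, in particular for $s\in\R$. Unwinding Definition~\ref{Extension}, $\Gamma_{a_j}[u_{s,p}+h]=\widetilde\Gamma_{a_j,s,p}[h]$, so $\Gamma_{a_1}[u_{s,p}+h]=\Gamma_{a_2}[u_{s,p}+h]$ for all $s\in\R$, $p\in\mathcal A(r_0)$, and all small $h$, which is exactly the claim.

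The main obstacle I anticipate is purely bookkeeping rather than conceptual: one must check that the smallness threshold on $\|h\|_{C^{2,\al}(\partial\Omega)}$ can be chosen uniformly in $(s,p)$ ranging over the (noncompact) band $\mathcal R\times\mathcal A(r_0)$ and simultaneously for both $a_1$ and $a_2$, so that the extension $\Gamma_{a_j}[u_{s,p}+h]$ is genuinely defined on the whole manifold where we want to apply unique continuation; this relies on the uniform-in-$(s,p)$ ellipticity and a-priori bounds from Lemma~\ref{Small} and Theorem~\ref{Regularity_2_complex}, together with hypothesis (H1) which removes $x$-dependence and hence any loss of uniformity as $|s|\to\infty$. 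Once that uniformity is in hand, the argument is just "analytic function vanishing on a totally real slice of a connected complex manifold vanishes identically," which is Proposition~\ref{step1} verbatim.
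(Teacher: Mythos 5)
Your proposal is correct and follows essentially the same route as the paper: use Lemma \ref{step_0} together with the hypothesis on real data to get $\widetilde\Gamma_{a_1,s,0}[h]=\widetilde\Gamma_{a_2,s,0}[h]$, then invoke the unique analytic continuation statement of Proposition \ref{step1} to propagate the equality to all $(s,p)\in\mathcal R\times\mathcal A(r_0)$. The paper's proof is a three-line version of exactly this argument; your added remarks on the uniformity of the smallness threshold in $(s,p)$ are a reasonable elaboration but not a departure.
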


\begin{proof}
We must show that for all $h\in C^{2,\al}(\overline\Omega)$ with sufficiently small norm,
\[
\widetilde \Gamma_{a_1,s,p}[h]= \widetilde \Gamma_{a_2,s,p}[h].
\]
Since  $\Gamma_{a_1}[ \tilde h] = \Gamma_{a_2}[\tilde h]$, we have $\Gamma_{a_1}[ u_{s,0} + h] = \Gamma_{a_2}[u_{s,0} + h]$. From Lemma \ref{step_0} we have for $h$ small, and all $s\in \R$,
\[
\widetilde \Gamma_{a_1,s,0}[h]= \widetilde \Gamma_{a_2,s,0}[h].
\]
Hence, we conclude thanks to Proposition \ref{step1}.
\end{proof}

\begin{proof}[Proof of Theorem \ref{MT}]
From Proposition \ref{Reduction}, we have $\Gamma_{a_1}[u_{s,p} + h]  = \Gamma_{a_2}[u_{s,p} + h]$, and from Definition \ref{Extension}, this means that $ \widetilde \Gamma_{a_1,s,p}[h] = \widetilde \Gamma_{a_2,s,p}[h].$ Hence, using Corollary \ref{Regularity_3}, Lemma \ref{5p1}, Corollary \ref{Coincide_0} and Corollary \ref{5p3}, we have $ \widetilde \Gamma_{\ell,a_1,s,p}[h] = \widetilde \Gamma_{\ell,a_2,s,p}[h].$  The final conclusion comes from Theorem \ref{Uniqueness_linear}.
\end{proof}

\bigskip

%
%
%

\bigskip

\end{document}